\documentclass[11pt,reqno]{amsart}

\usepackage{amsmath,amsfonts,amsthm,color,amssymb, mathrsfs}
\usepackage{xcolor,extarrows}
\usepackage{dsfont}
\usepackage{stmaryrd} 
\SetSymbolFont{stmry}{bold}{U}{stmry}{m}{n}
\usepackage[all]{xy}
\usepackage[left=1in, right=1in, top=1.1in,bottom=1.1in]{geometry}
\usepackage{enumitem}
\usepackage{hyperref}
\hypersetup{
colorlinks   = true,
citecolor    = blue,
linkcolor=blue
}
\allowdisplaybreaks
\numberwithin{equation}{section}

\newtheorem{theorem}{Theorem}[section]
\newtheorem{lemma}[theorem]{Lemma}
\newtheorem{corollary}[theorem]{Corollary}
\newtheorem{definition}[theorem]{Definition}

\newtheorem{proposition}[theorem]{Proposition}

\newtheorem{remark}[theorem]{Remark}
\newtheorem{notations}[theorem]{Notations}

\def\bd{\boldsymbol{\delta}}

\def\1{\mathbf 1}
\def\bZ{{\mathbb Z}}
\def\I{\mathbf{I}}
\def\bP{{\mathbb P}}
\def\fP{\mathbf{P}}
\def\rme{{\rme}}\def\al{{\alpha}}\def\be{{\beta}}

\def\om{{\omega}}

\def\e{\varepsilon}

\def\bR{\mathbb R}
\def\R{\bR}
\def\bN{\mathbb N}
\def\bE{\mathbb E}
\def\E{\mathbb E}
\def\fE{\mathbf{E}}
\def\mE{\mathscr{E}}

\def\cH{\mathcal H}
\def\cF{\mathcal F}
\def\cS{\mathcal S}

\def\Dom{\mathrm {Dom}}

\def\dd{{\mathrm d}}
\def\zf{Z_{N,\beta}^{\omega,h}}

\def\sint{\int{\raisebox{-.82em}{\hspace{-1.18em}\scriptsize \bf o}}\hspace{.7em}}
\def\sintn{\int_{[n]}{\raisebox{-.82em}{\hspace{-1.69em}\scriptsize \bf o}}\hspace{.7em}}
\def\ssint{\int{\raisebox{-.33em}{\hspace{-.87em}\scriptsize \bf o}}\hspace{.5em}}

\newcounter{bean}
\newcommand{\benuma}{\setlength{\labelwidth}{.25in}

\begin{list}
{(\alph{bean})}{\usecounter{bean}}}
\newcommand{\eenuma}{\end{list}}

\begin{document}

\title[A fractional stochastic heat equation]{on a fractional stochastic heat equation arising from the disordered pinning model}

\author[Z. Li]{Zi'an Li}\address{Research Center for Mathematics and Interdisciplinary Sciences, Shandong University, Qingdao 266237, China}
\email{202117106@mail.sdu.edu.cn}

\author[J. Song]{Jian Song}\address{Research Center for Mathematics and Interdisciplinary Sciences, Shandong University, Qingdao 266237, China}
\email{txjsong@sdu.edu.cn}

\author[R. Wei]{Ran Wei}\address{Department of Financial and Actuarial Mathematics, Xi'an Jiaotong-Liverpool University, 111 Ren'ai Road, Suzhou 215123, China}
\email{ran.wei@xjtlu.edu.cn}

\author[H. Zhang]{Hang Zhang}\address{Research Center for Mathematics and Interdisciplinary Sciences, Shandong University, Qingdao 266237, China}
\email{202217118@mail.sdu.edu.cn}

\begin{abstract}
We study the mild Skorohod  solution to the following fractional stochastic heat equation on $\R$:
\begin{equation*}
\begin{cases}
\partial_t u(t,x)=-(-\Delta)^{\rho/2} u(t,x) +\beta u(t,x)\delta_0(x)\xi(t), \\
u(0,\cdot)=u_0(x),
\end{cases}
\end{equation*}
where $-(-\Delta)^{\rho/2}$ with $\rho\in(0,2]$ is the fractional Laplacian and $\xi$ is a Gaussian noise with covariance  $\E[\xi(t) \xi(s)]=|t-s|^{2H-2}$ for $H\in(\frac12, 1]$. This equation with $\rho\in(1,2]$ arises naturally in the study of the disordered pinning model.

We show that the equation admits a local $L^2$-solution when $\rho = 2$,  whereas, for $\rho \in (0,2)$, any solution—if it exists uniquely—cannot be $L^p$-integrable for any $p > 1$. Moreover, inspired by the recent work of Quastel et~al.~\cite{qrv}, we prove that the equation has a unique global $L^1$-solution whenever $\tfrac{1}{\rho}+1<2H$. We also establish the strict positivity of the solution.  Our work partially fills the gap in the study of the Weinrib-Halperin prediction.
\end{abstract}
\maketitle

\tableofcontents

\section{Introduction}\label{sec:intro}
 
In this paper, we  study the following fractional stochastic heat equation (fractional SHE) on $\R$:
\begin{equation}\label{e:she}
\begin{cases}
\partial_t u(t,x)=-(-\Delta)^{\rho/2} u(t,x) +\beta u(t,x) \delta_0(x) \xi(t),\\
u(0,\cdot)=u_0(x),
\end{cases}
\end{equation}
where   $-(-\Delta)^{\rho/2}$ with $\rho\in(0,2]$ is the fractional Laplacian, $\delta_0(x)$ is the Dirac delta function,    $u_0(x)$ is a bounded measurable function or a finite measure on $\R$, and $\xi$ is a Gaussian noise with covariance \begin{equation}\label{e:covariance}
\bE \big[\xi(t)\xi(s)\big]=|t-s|^{2H-2},
\end{equation}
with $H\in(\frac12, 1]$ being the  Hurst parameter. This equation arises naturally from the disordered pinning model in correlated random environments and is closely related to the Weinrib--Halperin prediction~\cite{WH83} for disorder relevance and irrelevance (see more details in  Section~\ref{sec:pinning-model}).  The equation of type \eqref{e:she} was recently studied in \cite{WY24} for a special case $\rho=2$ and $H=\tfrac12$, which is known to be marginal (see Section \ref{sec:con_dis}) and has a close connection with the stochastic Volterra equation (see e.g., \cite{BFGMS20}).

Let $X=\{X_t, t\geq0\}$  be a one-dimensional symmetric $\rho$-stable process, whose infinitesimal generator is $-(-\Delta)^{\rho/2}$,  defined on a probability space  $ (\mathfrak X,\mathcal{G},\mathbf P)$.  The Gaussian noise $\xi$ is independent of $X$ and lives  on another probability space $(\Xi,\mathcal{F},\mathbb P)$. The probability and expectation on the product probability space $\mathfrak X\times \Xi$ are denoted by $\mathsf P$ and $\mathsf E$, respectively.

Without loss of generality, throughout the paper we assume  $\beta\equiv 1$  in \eqref{e:she}.

\subsection{Main results and literature review}

Formally applying Duhamel's principle to \eqref{e:she} yields
\begin{align*}
u(t,x)&=\int_\R g_\rho(t,x-y) u_0(y) dy+\int_0^t \int_\R g_{\rho}(t-s,x-y)u(s,y)\delta_0(y)\dd y\, \xi(s)\dd s\\
&=\int_\R g_\rho(t,x-y) u_0(y) dy+\int_0^t g_{\rho}(t-s,x)u(s,0)\xi(s)\dd s. 
\end{align*} 
This observation motivates the definition of a solution to \eqref{e:she}:
\begin{definition}[Mild Skorohod solution]\label{def: solution} A random field $\{u(t,x), t\in[0,T], x\in \R\}$ is called a \emph{mild Skorohod solution} to \eqref{e:she}, if $\{u(t,\cdot), t\in[0,T]\}$ is adapted to the filtration induced by the noise $\xi$, $u(t,x)$ is $L^1$-integrable for each $(t,x)\in[0,T]\times\R$,  and the following integral equation is satisfied:
\begin{equation}\label{e:int-eq}
u(t,x)=\int_\R g_\rho(t,x-y) u_0(y) dy+\sint\1_{[0,t]}(\cdot) g_{\rho}(t-\cdot,x)u(\cdot,0)\xi,
\end{equation}
where $g_\rho$ is the transition density function  of the $\rho$-stable process $X$ and the second integral on the right-hand side is an $L^1$-Skorohod integral (see Section \ref{sec:pre} for the definition).
\end{definition}

We stress that  only the $L^1$-integrability of  $u(t,x)$ is required in Definition \ref{def: solution}, and thus the Skorohod integral in \eqref{e:int-eq} should
only be necessarily $L^1$-integrable. Classical Skorohod integrals are defined as $L^2$-integrable random variables (see \cite[Section 1.3]{Nua06}), while more recently $L^1$-integrable Skorohod integrals have been developed by Quastel, Ramirez, and Vir\'ag \cite{qrv}. Both notions will play a key role in our analysis (see Section~\ref{sec:pre} for more details on Skorohod integrals).

Note that the first integral on the right-hand side of \eqref{e:int-eq} is well-defined as long as the initial condition $u_0(x)$ is a bounded function or a bounded measure. For simplicity, throughout the paper we assume  $u_0(x)\equiv 1$. We also always assume $\rho\in(0,2]$ and $H\in(\frac12,1]$.

We now state our main theorem on the well-posedness of \eqref{e:she}.

\begin{theorem}\label{thm-well-posedness}

Concerning the mild Skorohod solution to the SHE \eqref{e:she} in the sense of Definition~\ref{def: solution}, we have the following results:
\begin{enumerate}
\item When $\rho\in(0,2)$,  there exists no $L^p$-solution for any $p\ge2$; if  the solution to \eqref{e:she} is unique, then there exists no $L^p$-solution for any $p>1$.
\item When $\rho=2$, there exists a unique local $L^p$-solution but no global $L^p$-solution, for any $p>1$.
\item When $\rho\in(\frac{1}{2H-1},2]$, there exists a unique global $L^1$-solution.
\end{enumerate}
\end{theorem}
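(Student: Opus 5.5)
The plan has three strands, one per item of Theorem~\ref{thm-well-posedness}. All three rest on the Wiener chaos expansion of any $L^2$ solution. Iterating \eqref{e:int-eq} with $u_0\equiv1$ gives $u(t,x)=\sum_{n\ge0} I_n(f_n(\cdot;t,x))$, where
\[
f_n(s_1,\dots,s_n;t,x)=\frac{1}{n!}\sum_{\sigma} g_\rho(t-s_{\sigma(n)},x)\prod_{k=1}^{n-1} g_\rho\bigl(s_{\sigma(k+1)}-s_{\sigma(k)},0\bigr)\,\1_{\{0<s_{\sigma(1)}<\cdots<s_{\sigma(n)}<t\}}.
\]
By the usual Skorohod/chaos argument, an $L^2$ solution exists and is unique iff $\sum_n n!\|f_n\|^2_{\mathcal H^{\otimes n}}<\infty$, where $\|\cdot\|_{\mathcal H}$ is the inner product with kernel $|s-r|^{2H-2}$. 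The whole question therefore reduces to sharp upper and lower bounds on these chaos norms. The relevant inputs are $g_\rho(s,0)=c_\rho s^{-1/\rho}$ and $g_\rho(t-s,x)\le g_\rho(t-s,0)$.

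\textbf{Item (2), $\rho=2$.} Here $g_2(s,0)=c\,s^{-1/2}$. I would bound $n!\|f_n\|^2_{\mathcal H^{\otimes n}}$ from above using $|s-r|^{2H-2}\ge0$ and the standard estimate $\|f\|_{\mathcal H^{\otimes n}}\le C^n\|f\|_{L^{1/H}([0,t]^n)}$. The resulting Dirichlet-type integral, with exponents $-\frac{1}{2H}>-1$, is computable via Beta functions. It should give a bound of order $C^n t^{nH'} n!\,/\,\Gamma(\cdot)$ in which the factorial does not fully cancel. The series then converges only for small $t$, which yields a local $L^2$ solution; hypercontractivity upgrades this to $L^p$ for every $p$. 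For non-globality I would take the matching lower bound: since the kernels are all positive, the $\mathcal H$-norm is at least the integral over the diagonal-nearby region. This gives $n!\|f_n\|^2\ge c^n t^{an}(n!)^{\gamma}$ with $\gamma>0$, so the series diverges for large $t$. For $p\in(1,2)$, I would argue that any $L^p$ solution coincides with the chaos series, via a projection/uniqueness argument using $\bE[u\,I_n(\phi)]$, which makes sense by duality for $p>1$.

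\textbf{Item (1), $\rho<2$.} Now $\rho<2$ forces $1/\rho>1/2$. The key point is that $n!\|f_n\|^2$ already diverges for a fixed $n$, indeed already for $n=2$ or $n=1$ near $x=0$. For instance $\|f_1(\cdot;t,0)\|_{\mathcal H}^2=\iint (t-s)^{-1/\rho}(t-r)^{-1/\rho}|s-r|^{2H-2}$ is finite iff $2/\rho<2H$. For higher $n$, the product $\prod g(s_{k+1}-s_k,0)$ squared against $|s-r|^{2H-2}$ produces divergences of type $\int s^{-2/\rho+2H-1}$ near coinciding times, and these eventually fail whenever $\rho<2$. I would exhibit the specific $n$ with $n!\|f_n\|^2=\infty$; this gives no $L^2$ solution, and hence none in $L^p$, $p\ge2$. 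Under uniqueness, any $L^p$ solution with $p>1$ has chaos components $\bE[u\, I_n(\phi)]$ which must equal $f_n$ weakly. But $f_n\notin\mathcal H^{\otimes n}$, and $L^p$ with $p>1$ controls chaos projections against $L^{q}$-test elements, which yields the contradiction. Identifying the exact $n$ and checking this duality step is where I expect the main obstacle.

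\textbf{Item (3).} Following \cite{qrv}, I would construct the $L^1$ solution by a Feynman--Kac-type formula. Take $u(t,x)=\mathbf E_x[\,\cdot\,]$ of a Wick-renormalised exponential $\exp\bigl(\int_0^t\xi(t-s)\,dL_s^0-\tfrac12\|L^0\|_{\mathcal H}^2\bigr)$, where $L^0$ is the local time at $0$ of $X$. The local time exists because $\rho>1$, which is implied by $\frac1\rho+1<2H$. For fixed $X$ the exponent is Gaussian with finite variance $\|dL^0\|_{\mathcal H}^2=\iint|s-r|^{2H-2}dL_sdL_r$. The condition $\frac1\rho+1<2H$ is precisely what makes this finite: $L^0$ is Hölder of order $1-1/\rho$, so one needs $2(1-1/\rho)+2H-2>0$ roughly, i.e.\ $2H>1+\frac1\rho$ after the correct dimension count. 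The $L^1$ norm of $u$ is then exactly $1$ by Fubini, since each exponential martingale has mean one. I would verify the mild equation in the $L^1$-Skorohod sense of \cite{qrv} through their characterisation by testing against exponential functionals $\exp(\xi(\phi)-\frac12\|\phi\|^2)$: Girsanov reduces it to the deterministic equation with potential $\langle\phi,\cdot\rangle$, which is solved by classical Feynman--Kac. Uniqueness would follow from the same transform, since the $S$-transform determines an $L^1$ variable and must satisfy that deterministic linear Volterra equation, whose solution is unique.
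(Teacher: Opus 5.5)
\textbf{Item (1) rests on a false claim.} Your plan needs a fixed $n$ with $n!\|f_n\|^2_{\mathcal H^{\otimes n}}=\infty$. No such $n$ exists when $\rho\in(\frac1H,2)$. That interval is nonempty for every $H\in(\frac12,1]$, and it contains the whole range $(\frac{1}{2H-1},2)$ of item (3).

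\emph{Why every chaos is finite there.} By Lemma~\ref{lem:1/H norm}, $\|f_n\|_{\mathcal H^{\otimes n}}\le C^n\|f_n\|_{L^{1/H}}$. The norm $\|f_n\|_{L^{1/H}}$ only involves $\int_{[0,t]^n_<}\prod_i(s_{i+1}-s_i)^{-1/(\rho H)}\,ds$, which is finite iff $\rho H>1$. This is exactly Lemma~\ref{lem:rhoKL-inH}, which the paper itself uses in the $L^1$ construction. For $x\neq0$ even $f_1$ is bounded. Your own power count $\int_0 s^{-2/\rho+2H-1}\,ds$ diverges only when $H\le 1/\rho$, not ``whenever $\rho<2$''. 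So a fixed divergent chaos can occur only for $\rho\le 1/H$.

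\emph{What actually fails for $\rho<2$ is summability.} Bound $|s_i-s_i'|^{2H-2}\ge t^{2H-2}$ on $[0,t]^2$. Then use $g_\rho(s,0)=cs^{-1/\rho}$, the lower bound of Lemma~\ref{lem:g-bound} for $g_\rho(t-s_n,x)$, the Dirichlet formula of Lemma~\ref{lem:a direct calculate}, and Stirling. This gives $n!\|f_n\|^2_{\mathcal H^{\otimes n}}\ge C^{2n}n^{-4}n^{n(\frac2\rho-1)}t^{2n(H-\frac1\rho)+\frac2\rho+2}$. That is superexponential in $n$ for $\rho<2$ and only geometric at $\rho=2$.

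\textbf{Consequence for the case $p\in(1,2)$.} This also breaks your argument for $p\in(1,2)$, both in item (1) and for ``no global $L^p$ solution'' in item (2). Once every $f_n$ lies in $\mathcal H^{\otimes n}$, identifying the chaos components of an $L^p$ solution via $\bE[u\,I_n(\phi)]$ gives no contradiction by itself. An $L^p$ variable with $p<2$ can have $\sum_n\|J_nF\|^2_{L^2}=\infty$ with each $J_nF\in L^2$.

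The missing ingredient is control at a geometric rate in $n$, supplied by hypercontractivity (Proposition~\ref{prop:p-2-ineq}):
\begin{itemize}
\item by Mehler's formula, $T_\tau u_\lambda$ solves the equation with $\beta=\sqrt\lambda\, e^{-\tau}$;
\item uniqueness then gives $T_\tau u_\lambda=u_{e^{-2\tau}\lambda}$;
\item Nelson's theorem yields $\|u_{p-1}(t,x)\|_{L^2}\le\|u_1(t,x)\|_{L^p}$, i.e.\ $\sum_n(p-1)^n\, n!\,\|f_n\|^2_{\mathcal H^{\otimes n}}<\infty$ for any $L^p$ solution.
\end{itemize}
This is impossible for every $t$ when $\rho<2$. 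When $\rho=2$ it is impossible for large $t$, since the growth is $C^nt^{n(2H-1)}$. Your chaos-identification idea could be salvaged with the bound $\|J_nF\|_{L^2}\le(p-1)^{-n/2}\|F\|_{L^p}$, also from hypercontractivity, but some $n$-dependent control is indispensable.

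\textbf{Growth rates in item (2).} Your rates are inconsistent. A lower bound $c^nt^{an}(n!)^{\gamma}$ with $\gamma>0$ would make the series diverge for every $t>0$, contradicting the local existence you claim. At $\rho=2$ the factorials cancel exactly in both the $L^{1/H}$ upper bound and the lower bound above. Both are $C^nt^{n(2H-1)}$ up to polynomial factors in $n$, and that is what produces a finite $t_c$.

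\textbf{Item (3): a different route that works after one repair.} The paper proceeds as follows:
\begin{itemize}
\item it builds $Z_n=\mathbf E_{(t,x)}[M_n(t)]$ solving the projected equations;
\item it gets uniform integrability from the truncations $A_l=\{I_t<l\}$, the decoupling $\mE(\nu_L,\nu_L')\le\frac12(I_t+I_t')$ and Proposition~\ref{prop:Z-convege};
\item it passes to the limit with the convergence theorem of \cite{qrv}.
\end{itemize}
Your direct Feynman--Kac definition needs only $I_t<\infty$ a.s., which follows from $\mathbf E_{(t,x)}I_t<\infty$ (Corollary~\ref{cor:XX}). Heuristically the H\"older exponent enters once, $(1-\frac1\rho)+2H-2>0$, not twice as in your count. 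Tonelli and independence of the $\xi_k$ then give $\bE[u\,|\,\mathcal F_n]=Z_n$, so uniform integrability is automatic. The price is that this definition is tied to finite self-energy: by Proposition~\ref{prop:J=0}, the integrand vanishes on $\{L_t>0\}$ when $\alpha+2H\le 2$. The mutual-energy framework, by contrast, is designed to survive infinite self-energy.

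The repair is to test with the bounded functionals $e^{i\sum_{k\le n}\lambda_k\xi_k}$ rather than $\exp(\xi(\phi)-\frac12\|\phi\|^2_{\mathcal H})$. The $L^1$-Skorohod integral is defined only through bounded $f$ with bounded gradient. Moreover, a competing $L^1$ solution $v$ need not satisfy $\bE[|v|e^{\xi(\phi)}]<\infty$, so ``the $S$-transform determines an $L^1$ variable'' is not available for uniqueness.

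With imaginary exponentials, take $\phi=\sum_{k\le n}\lambda_ke_k$, $\tilde\phi$ defined as in \eqref{def:tilde_e}, and $A_s=\int_0^s\tilde\phi(r)\,dL_r$. Your Girsanov step then reduces existence to the identity $\mathbf E_{(t,x)}e^{iA_t}=1+i\int_0^tg_\rho(t-s,x)\tilde\phi(s)\mathbf E_{(s,0)}e^{iA_s}\,ds$, which follows from Lemma~\ref{lem:L-property}. Uniqueness becomes the Fourier/Henry--Gronwall argument of Proposition~\ref{prop:exis-uniq-u-n}.
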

\begin{proof}
Result (1) follows from Propositions~\ref{prop:no-L^2},~\ref{prop:p-2-ineq} and  Remark~\ref{rem:p-2}; result (2) from Propositions~\ref{prop:local-L^2}, \ref{prop:p-2-ineq} and Remark~\ref{rem:p-2}; and result (3) from Propositions~\ref{prop:exis-uniq-u-n}, \ref{prop:un-u}, and~\ref{prop:Zn-Z}.
\end{proof}

\begin{remark}
In this paper, we show that for $\rho\in(1,2]$, the solution to \eqref{e:she} is unique whenever it exists (see Sections \ref{sec:local}-\ref{sec:project_eq}). The condition $\rho>1$ is imposed to ensure the existence of the local time of the stable process $X$, which plays a crucial role in our analysis. \end{remark}


Stochastic heat equations (SHEs) with multiplicative noise are of particular interest, owing in part to their close connection with the parabolic Anderson model. In most of the existing literature, the solutions are 
$L^p$-integrable for all 
$p\ge 2$ (see, e.g., \cite{walsh, dalang99, hu02, hns11, S17, chen19, cdot21, cgs24}). In this setting, the long-time asymptotics of the 
$p$-th moments of the solutions are intimately related to the so-called \emph{intermittency phenomenon} (see, e.g., \cite{cm94, bc95, khosh14, chsx15, chss18}).

It was shown by Hu \cite{hu02} that the stochastic heat equation (SHE) on $\R^2$
 with multiplicative spatial white noise admits only a local $L^2$-solution $u(t,x)$, in the sense that $\E|u(t,x)|^2$ is finite only for small time $t$. In contrast, Quastel et al.\ \cite{qrv} recently developed a theory of $L^1$-integrable Skorohod integrals and demonstrated that the same equation admits a global solution that is $L^1$-integrable  for all time $t>0$.

Theorem~\ref{thm-well-posedness} provides a detailed characterization of $L^p$-integrable  solutions with $p\ge 1$ for the SHE~\eqref{e:she} with $\rho\in(0,2]$ and $H\in(\frac12, 1]$. In particular, it shows that \eqref{e:she} admits a ``genuine'' $L^1$-solution  when $\rho\in(\frac1{2H-1}, 2)$, in the sense that there is no $L^p$-solution for any $p>1$. To our best knowledge, this is the first equation demonstrating this property, noting that the parabolic Anderson model on $\R^2$ considered in \cite{qrv} still has a local $L^2$-solution. This phenomenon may be also related to some phase transition properties of disordered pinning models (see more discussions in Section \ref{sec:pinning-model}).

We now explain our approach to analyzing $L^p$-Skorohod solutions for $p>1$ and for $p=1$, respectively.

Clearly, an $L^p$-solution with $p>2$ implies an $L^2$-solution by Jensen’s inequality; on the other hand, by hypercontractivity, an $L^p$-solution with $p\in(1,2)$ also implies an $L^2$-solution (see Proposition~\ref{rem:p-2}). Thus, for $p>1$, the study of $L^p$-solutions essentially reduces to the study of $L^2$-solutions, which can be carried out by analyzing the Wiener chaos expansion of the solution (see, e.g., \cite{hu02, hns11, S17, cdot21}).

A formal iteration of \eqref{e:int-eq} with $u_0(x)\equiv 1$ yields
\begin{align*}
&u(t,x)\\
&=1+\int_0^tg_{\rho}(t-s_1,x)u(s_1,0)\xi(s_1)\dd s_1
\\&=1+\int_0^tg_{\rho}(t-s_1,x)\left(1+\int_0^{s_1} g_{\rho}(s_1-s_{2},0)u(s_{2},0)\xi(s_{2})\dd s_{2}\right)\xi(s_1)\dd s_1
\\&=1+\sum_{n=1}^{\infty}~~\idotsint\limits_{0<s_1<\cdots<s_n<t}g_{\rho}(t-s_n,x)g_{\rho}(s_n-s_{n-1},0)\cdots g_{\rho}(s_2-s_1,0)\xi(s_{n-1})\cdots\xi(s_1)\dd s_1\cdots\dd s_n.
\end{align*}
Then, if $u(t,x)$ is $L^2$-integrable, it admits the following unique Wiener chaos expansion:
\begin{equation}\label{e:sol-chaos}
u(t,x)=1+\sum_{n=1}^{\infty}\mathbf{I}_n(f_n(t,x)),
\end{equation}
where
\begin{equation}\label{e:fn}
\begin{aligned}
&f_n(t,x;s_1,\cdots,s_n)\\&=
\mathrm{Sym}\left[g_{\rho}(t-s_n,x)g_{\rho}(s_n-s_{n-1},0)\cdots g_{\rho}(s_2-s_1,0) \mathbf{1}_{[0,t]^n_<}(s_1,\cdots s_n)\right]\\
&=\frac1{n!}\sum_{\sigma\in \mathscr P_n} g_\rho(t-s_{\sigma(n)}, x) g_\rho(s_{\sigma(n)}-s_{\sigma(n-1)},0)\cdots g_\rho(s_{\sigma(2)}-s_{\sigma(1)},0)\mathbf{1}_{[0,t]^n_<}(s_{\sigma(1)},\cdots s_{\sigma(n)}),
\end{aligned}   
\end{equation}
with   $\mathscr P_n$ being the set of the permutations on $\{1, 2, \dots, n\}$ and $[0,t]^n_<:=\{\boldsymbol{s}=(s_1\cdots s_n)\in\bR^n,~0<s_1<\cdots s_n<t\}$, 
and
\begin{equation*}
\begin{aligned}
\mathbf{I}_n(f_n(&t,x))=\int_0^t \cdots \int_0^t f_n(t,x;s_1,\cdots,s_n)\xi(s_1)\cdots\xi(s_n)\dd s_1\cdots\dd s_n
\end{aligned}   
\end{equation*}
is an $n$-th multiple Skorohod integral.

It is well-known that the existence and uniqueness of an $L^2$-solution to~\eqref{e:she} is equivalent to
\begin{align}\label{e:second_moment}
\E\big[u(t,x)^2\big]=\sum_{n=0}^\infty n! \|f_n\|^2_{\mathcal H^{\otimes n}}<\infty,    
\end{align}
with  $\mathcal H$ being the Hilbert space associated with the Gaussian noise $\xi$ (see Section~\ref{sec:Malliavin}), which can be verified by direct computations, noting that $f_n$ is given explicitly in~\eqref{e:fn}. We refer readers to Section~\ref{Sec: Lp} for further details.

The treatment of the $L^1$-solution is fundamentally different from that of the $L^2$-solution, and our methodology is greatly inspired by the recent work \cite{qrv}.

Let $\{e_k\}_{k\in\mathbb N}$ be an orthonormal basis of $\mathcal H$. Denote 
\begin{equation}\label{e:mM}
\begin{aligned}
m_{k}(t)&:=\int_0^t\int_0^T \delta_0(X_s)e_k(r)|s-r|^{2H-2}\dd r\dd s,\\
M_{n}(t)&:=\exp\left\{\sum_{k=1}^n\left(m_{k}(t)\xi_k-\frac{1}{2}m_{k}(t)^2\right)\right\},
\end{aligned}
\end{equation}
where  $\delta_0(\cdot)$ is the Dirac delta function and 
\begin{equation}\label{e:xi-k}
\xi_k:=\int_0^T e_k(s) \xi(s)\dd s
\end{equation}
is the Wiener integral of $e_k$ (see Section \ref{sec:Malliavin}).
The term $m_k(t)$  has an alternative expression \eqref{e:mkt}, which is an integral of a continuous function against the measure induced by the local time of $X$.


Let $\mathbf P_{(t,x)}$ denote the law of a stable process on $[0,t]$ with an arbitrary initial value, conditioned to be at $x$ at time $t$. Alternatively, we can view $\mathbf P_{(t,x)}$ as the law of the ``backward stable process'' $\{\tilde{X}^{(t,x)}_s=x+X_{t-s},s\in[0,t]\}$ with the terminal value $\tilde X_t^{(t,x)}=x$. Similarly, we use $\mathbf E_{(t,x)}$ to denote the expectation under the law  $\mathbf P_{(t,x)}$. 

Denote
\begin{equation*}
Z_n=Z_n(t,x)=u_n(t,x):=\mathbf E_{(t,x)}\big[M_{n}(t)\big].
\end{equation*}
Then, $u_n(t,x)$ solves a finite-dimensional projection of \eqref{e:she}, where the noise $\xi$ is projected onto the Gaussian subspace spanned by$\{\xi_1,\dots, 
\xi_n\}$. Specifically, 
\[u_n(t,x)=1+\E\left[\sint  \1_{[0,t]}(\cdot) g_{\rho}(t-\cdot,x)u(\cdot,0)\xi\bigg| \mathcal F_n \right],\] 
where $\mathcal F_n=\sigma(\xi_1, \dots, \xi_n)$. 
Clearly, $\{Z_n\}_{n\in\mathbb N}$ is a nonnegative martingale and hence converges almost surely. Assuming
$\rho\in(\frac1{2H-1}, 2]$, we prove the uniform integrability of ${Z_n}$ and show that the $L^1$-limit 
\begin{equation}\label{e:Z}
u(t,x):=Z=\lim_{n\to \infty} Z_n
\end{equation}
solves \eqref{e:she} in the sense of Definition~\ref{def: solution}. Our analysis of uniform integrability relies on the finiteness of the \emph{self-energy} of the local time of the $\rho$-stable process $X$:
\begin{equation}\label{e:I-t}
I_t:=\sum_{k=1}^\infty m_k^2(t)=\int_0^t\int_0^t\delta_0(X_r) \delta_0(X_s)|r-s|^{2H-2}\dd r\dd s.
\end{equation}
As a byproduct, we show that $I_t$ is finite almost surely if and only if $\rho>\frac1{2H-1}$, which is of interest by itself. Readers are referred to Section~\ref{sec: L1-sol} for details.

\

One advantage of the above-mentioned martingale approach is that it allows us to establish a $0$--$1$ law for the solution (see Proposition~\ref{prop:0-1}), and subsequently deduce its strict positivity:
\begin{theorem}\label{thm:positivity}
When $\rho\in(\frac1{2H-1},2]$, the solution to \eqref{e:she} is positive almost surely. 
\end{theorem}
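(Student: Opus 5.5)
The argument has two steps: show that the event $\{u(t,x)=0\}$ is trivial, and then rule out that it has probability one. Fix $(t,x)$ and let $u(t,x)=Z=\lim_n Z_n$ be the $L^1$-limit from \eqref{e:Z}. Since $Z_n\to Z$ in $L^1$ and each $Z_n$ has mean $1$ (it is $\mathbf E_{(t,x)}$ of an exponential martingale with unit mean), we get $\E[Z]=1$. Hence $\mathbb P(Z>0)>0$, and it suffices to prove that $A:=\{Z=0\}$ has probability $0$ or $1$. This is exactly the 0--1 law of Proposition~\ref{prop:0-1}, which I take as given. Combined with $\E Z=1$, it forces $\mathbb P(A)=0$, so $u(t,x)>0$ almost surely.

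If I had to prove the 0--1 law myself, I would proceed as follows. For fixed $n$, write $Z=\lim_{m\to\infty}Z_m$, where $M_m=M_n\cdot M_{n,m}$ and $M_{n,m}$ collects the factors with $k=n+1,\dots,m$. Then $Z_m=\mathbf E_{(t,x)}[M_n(t)M_{n,m}(t)]$. The factor $M_n(t)$ depends on $(\xi_1,\dots,\xi_n)$, but for fixed $(\xi_1,\dots,\xi_n)$ it is strictly positive and, under $\mathbf P_{(t,x)}$, bounded above and below. Indeed $|m_k(t)|\le \sqrt{I_t}$, so $M_n(t)$ lies between $\exp(-C(\xi)\sqrt{I_t}-I_t)$ and $\exp(C(\xi)\sqrt{I_t})$, where $C(\xi)=\sum_{k\le n}|\xi_k|$. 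This bound is not uniform in the path, since $I_t$ is only finite almost surely. I would therefore compare $Z$ with $\tilde Z^{(n)}:=\lim_m \mathbf E_{(t,x)}[M_{n,m}(t)]$, which is measurable with respect to $\sigma(\xi_k,k>n)$, and aim to show $\{Z=0\}=\{\tilde Z^{(n)}=0\}$ up to null sets. For this I would use measures on path space: write $Z=\mathbf E_{(t,x)}[M_n(t)\,\cdot\,]$ integrated against the limiting random measure $Q^{(n)}(dX)=\lim_m M_{n,m}(t)\,\mathbf P_{(t,x)}(dX)$, which exists by the same uniform-integrability argument as in Proposition~\ref{prop:Zn-Z}. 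Since $M_n(t)>0$ for every path with $I_t<\infty$, and $Q^{(n)}$ charges only such paths, $Z=0$ if and only if $Q^{(n)}$ has zero total mass, that is, if and only if $\tilde Z^{(n)}=0$. So $A$ lies, up to null sets, in $\bigcap_n\sigma(\xi_k,k>n)$. By Kolmogorov's 0--1 law for the i.i.d.\ standard Gaussians $\xi_k$, $\mathbb P(A)\in\{0,1\}$.

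The main obstacle is the last step: constructing the random measure $Q^{(n)}$, or an equivalent limiting argument, rigorously, and justifying that the $\xi_1,\dots,\xi_n$-dependent positive weight $M_n(t)$ can be pulled through the limit without creating or destroying zero mass. I would first try a direct approach. Truncate to paths with $I_t\le K$, on which $M_n(t)$ is bounded above and below by constants $c_K(\xi),C_K(\xi)>0$. This gives $c_K\tilde Z^{(n)}_K\le Z_K\le C_K\tilde Z^{(n)}_K$ for the truncated limits. Then let $K\to\infty$, using $I_t<\infty$ $\mathbf P$-a.s.\ (valid since $\rho>\frac1{2H-1}$) together with the uniform integrability to control the contribution of $\{I_t>K\}$.
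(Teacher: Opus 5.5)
Your proof is the paper's. The paper deduces strict positivity in exactly the same way: the 0--1 law of Proposition~\ref{prop:0-1} is combined with $\E[Z]=1$, which follows from $\E[Z_n]=1$ and the $L^1$-convergence in Proposition~\ref{prop:Zn-Z}.

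Your optional sketch of the 0--1 law compares $\{Z=0\}$ with the same tail events $\{\lim_m \mathbf E_{(t,x)}[M_{n,m}(t)]=0\}$ that the paper uses. The two routes differ only in the hard inclusion, and both work. You truncate to $\{I_t\le K\}$, where Fatou's lemma alone gives $\E\big[\lim_m \mathbf E_{(t,x)}[\mathbf 1_{\{I_t>K\}}M_n(t)M_{n,m}(t)]\big]\le \mathbf P_{(t,x)}(I_t>K)$. The paper instead slices by level sets of $M_k$ and interchanges limits using Doob's maximal inequality (Lemma~\ref{exchange}).
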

\begin{proof}
It follows directly from Proposition \ref{prop: positivity}.    
\end{proof}

As solutions $u(t,x)$ to SHEs can be viewed as partition functions of polymer models, it is desirable to establish the strict positivity of the solutions in order to define the free energy $\log u(t,x)$. In particular, the Cole--Hopf transformation $\log u(t,x)$ of the solution $u(t,x)$ to the SHE with multiplicative space--time white noise solves the KPZ equation. We refer to \cite{mueller91, flores14, ch19, qrv} for positivity results on SHEs.

\

Let $G\sim N(0,1)$ be a standard Gaussian random variable and $a\in\R$ be a real number.  Then for all bounded measurable functions $f$, we have $$E[Yf(G)] = E[f(G+a)] \text{ with } Y=e^{aX-\frac12a^2}.$$  
This identity is a manifestation of the Cameron–Martin theorem.
In infinite dimensions, an analogous change-of-measure formula holds for Gaussian fields: exponential tilting by a (regularized) Gaussian functional shifts the field by an element of the Cameron–Martin space. When the shift $a$ itself is randomized and is defined in a distributional, rather than pointwise, sense, the resulting change of measure gives rise to  the theory of Gaussian multiplicative chaos (GMC) (see, e.g., \cite{Sha16}).

In our setting (see equations \eqref{e:mM} and \eqref{e:xi-k}), $\xi=(\xi_1,\xi_2, \cdots)$ is an infinite-dimensional random vector consisting of i.i.d.\ standard Gaussian random variables on the probability space $(\Xi, \mathcal F, \mathbb P)$, and $m(t)=(m_1(t), m_2(t), \cdots)$ is a vector of shift randomized  by $X$ defined on $(\mathfrak X, \mathcal G, \mathbf P)$. We introduce the following definition.  

\begin{definition}[Partition function]\label{def-Par} A random variable $J$ defined on $\Xi$ is called the \emph{partition
function} or \emph{total mass} of the \emph{randomized shift} $m(t)$, if for any bounded measurable function $F:\Xi\to\R$, we have
\begin{equation}\label{e:partition}
\E[JF(\xi)]=\E\fE_{(t,x)} [F(\xi+m(t))].
\end{equation}
An equivalent formulation is that the marginal distribution of $\xi+m(t)$ is absolutely continuous with respect to that of $\xi$ with density (Radon-Nikodym derivative) $J$. 
\end{definition}

When $\rho\in(\frac1{2H-1},2]$,  the randomized shift $m(t)$ is square integrable, i.e.,  $\sum_{k=1}^\infty m_k^2(t)(\omega)<\infty$ $\mathbf P$-a.s. (see Corollary~\ref{cor:XX} and Remark~\ref{rem:self-energy}). Then $\sum_{k=1}^\infty m_k(t)(\omega)\xi_k-\frac12 m_k^2(t)(\omega)$ exists $\mathbf P$-a.s.,  and  we can define a random measure on $\mathfrak X$ parametrized by $\xi$:
\begin{equation}\label{e:M-xi}
M_{\xi,t}(\dd \omega)=\exp\left\{\sum_{k=1}^\infty \left(m_k(t)(\omega)\xi_k-\frac12 m_k^2(t)(\omega)\right) \right\}\mathbf P_{(t,x)}(\dd \omega).
\end{equation}
One can check (see, e.g.,  \cite[Example 12]{Sha16}) that the $\mathbb P$-random measure $M_{\xi,t}$ on $\mathfrak (X,\mathcal G)$ satisfies 
\begin{equation}
\begin{aligned}\label{e:GMC}
\bE M_{\xi,t}[F(\xi,\omega)]=\mathbb E\mathbf E_{(t,x)}[F(\xi+ m(t,\omega),\omega)],
\end{aligned}
\end{equation}
for any bounded measurable function $F:\Xi\times\mathfrak X\to\bR$, where we abuse $M_{\xi,t}[\cdot]$ to denote the integral against the  measure $M_{\xi,t}(\dd \omega)$. Then $M_{\xi,t}$ is referred to as the unnormalized polymer measure (or the Gaussian multiplicative chaos associated with the randomized shift \(m(t)\)) on \(\mathfrak X\), and the normalized measure \(M_{\xi,t} / M_{\xi,t}(\mathfrak X)\) is called the polymer measure.

When $F$ in \eqref{e:GMC} is independent of $\omega\in \mathfrak X$, the equality \eqref{e:GMC} reduces to \eqref{e:partition} with the partition function given by 
\begin{equation}\label{e:partition-M}
J=M_{\xi,t}(\Omega)=\mathbf E_{(t,x)}\left[ \exp\left\{\sum_{k=1}^\infty\left( m_k(t)\xi_k-\frac12 m_k^2(t)\right) \right\}\right],
\end{equation}
which in particular holds when $\rho\in(\frac1{2H-1},2]$. 

Now we are ready to present the Feynman--Kac formula. 
\begin{theorem}[Feynman--Kac formula]\label{thm-gmc} Assume $\rho\in(\frac1{2H-1},2]$. Then, we have the following Feynman--Kac formula for the solution $u(t,x)$ to \eqref{e:she}
\begin{equation}\label{e:FK}
\begin{aligned}
u(t,x)&=\lim_{n\to\infty }\mathbf E_{(t,x)}\left[ \exp\left\{\sum_{k=1}^n\left( m_k(t)\xi_k-\frac12 m_k^2(t)\right) \right\}\right]\\
&=\mathbf E_{(t,x)}\left[ \exp\left\{\sum_{k=1}^\infty \left(m_k(t)\xi_k-\frac12 m_k^2(t) \right)\right\}\right].
\end{aligned}
\end{equation}
\end{theorem}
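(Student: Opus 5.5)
The first equality in \eqref{e:FK} is essentially the construction of the solution. By the definition $Z_n=u_n(t,x)=\mathbf E_{(t,x)}[M_n(t)]$ and by \eqref{e:Z}, Theorem~\ref{thm-well-posedness}(3) shows that for $\rho\in(\frac1{2H-1},2]$ the solution is $u(t,x)=\lim_{n} Z_n$. The limit exists almost surely, and in $L^1$ by uniform integrability (Propositions~\ref{prop:exis-uniq-u-n}, \ref{prop:un-u}, \ref{prop:Zn-Z}). Uniqueness of the global $L^1$-solution then identifies this limit with \emph{the} solution $u(t,x)$. So the real task is the second equality, which amounts to exchanging the limit in $n$ with the expectation $\mathbf E_{(t,x)}$.

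For the second equality I would work on the product space $\mathfrak X\times\Xi$ and fix $\omega$ with $I_t(\omega)=\sum_k m_k^2(t)(\omega)<\infty$. This holds $\mathbf P$-a.s. by Corollary~\ref{cor:XX} and Remark~\ref{rem:self-energy}, and since $m(t)$ depends on $X$ only through the bridge law this also holds $\mathbf P_{(t,x)}$-a.s. For such $\omega$, the sum $\sum_{k\le n} m_k\xi_k$ is a sum of independent centered Gaussians under $\mathbb P$ with summable variances, so it converges $\mathbb P$-a.s. and in $L^2(\mathbb P)$. By Fubini, $M_n(t)\to M_\infty(t):=\exp\{\sum_{k=1}^\infty(m_k\xi_k-\frac12 m_k^2)\}$ $\mathsf P$-a.s. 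Moreover, for fixed $\omega$, $M_n(t)$ is a $\mathbb P$-martingale in $n$, and it is bounded in $L^2(\mathbb P)$ because $\E[M_n^2]=e^{\sum_{k\le n}m_k^2}\le e^{I_t}$. Hence $\E[M_\infty\mid\mathcal F_n]=M_n$ for $\mathbf P_{(t,x)}$-a.e.\ $\omega$.

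Next, set $J:=\mathbf E_{(t,x)}[M_\infty(t)]$, which is a nonnegative random variable on $\Xi$. Since $M_\infty\ge0$, conditional Fubini applies and gives $\E[J\mid\mathcal F_n]=\mathbf E_{(t,x)}\E[M_\infty\mid\mathcal F_n]=\mathbf E_{(t,x)}[M_n]=Z_n$; this uses the independence of $X$ and $\xi$. Also $\E J=\mathbf E_{(t,x)}\E M_\infty=1<\infty$. Thus $Z_n$ is the closed martingale generated by $J\in L^1(\mathbb P)$. By Lévy's upward theorem, $Z_n\to\E[J\mid\mathcal F_\infty]$ almost surely and in $L^1$, where $\mathcal F_\infty=\sigma(\xi_k,k\ge1)$ is the full noise $\sigma$-field. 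Since $m_k$ is $\mathcal G$-measurable, $J$ is $\mathcal F_\infty$-measurable, so the limit is $J$ itself. This proves the second equality, and as a byproduct it re-proves the uniform integrability of $\{Z_n\}$. It also identifies $J$ with the partition function \eqref{e:partition-M}.

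The main obstacle I expect is measurability and the null-set bookkeeping. One must check that $m_k(t)$, defined via the local time, is jointly measurable. One must also check that the almost-sure finiteness of $I_t$ transfers from $\mathbf P$ to the bridge law $\mathbf P_{(t,x)}$. The latter requires some absolute continuity or conditioning argument, since $\mathbf P_{(t,x)}$ conditions on a null event; I would handle it by disintegration of $\mathbf P$ over the endpoint, valid for Lebesgue-a.e.\ $x$, and then extend to every $x$ by time reversal, using the fact that the bridge restricted to $[0,t-\epsilon]$ is absolutely continuous with respect to the free process. Once these points are in place, the martingale argument above is routine.
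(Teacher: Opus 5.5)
Your proof is correct. The first equality you handle exactly as the paper does, via Propositions~\ref{prop:exis-uniq-u-n}, \ref{prop:un-u} and \ref{prop:Zn-Z}. For the second equality you take a genuinely different route.

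\textbf{The paper's route.} Lemma~\ref{ab-con}, imported from \cite{qrv}, identifies the $L^1$-limit $Z$ as the Radon--Nikodym density of the law of $\xi+m$ with respect to $\mathbb P$. The Gaussian multiplicative chaos identity \eqref{e:GMC}, cited from \cite{Sha16} and valid once $\sum_k m_k^2<\infty$ a.s., says that $\mathbf E_{(t,x)}[M_\infty]$ is also such a density. Uniqueness of densities then gives the equality.

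\textbf{Your route.} You show directly that $Z_n=\mathbb E[J\mid\mathcal F_n]$ with $J=\mathbf E_{(t,x)}[M_\infty]$, and then apply L\'evy's upward theorem. This is in effect a self-contained proof of the fact the paper imports. It also gives more: a closed martingale is automatically uniformly integrable, so you obtain the $L^1$-convergence of $Z_n$ using only $\sum_k m_k^2=I_t<\infty$ $\mathbf P_{(t,x)}$-a.s. In this regime you therefore do not need Proposition~\ref{prop:Zn-Z}, with its events $A_l$, the bound on $\mathfrak e(\mathbf 1_{A_l}m)$ and the Cauchy--Schwarz reduction of the mutual energy to self-energies.

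\textbf{What each buys.} Your argument is shorter and more elementary here. The paper's route places $Z$ in the randomized-shift/partition-function framework of Definition~\ref{def-Par}. Its $L^1$ criterion, Proposition~\ref{prop:Z-convege}, does not itself require finite self-energy, which is the setting of \cite{qrv}.

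\textbf{A correction to your stated obstacle.} The difficulty you anticipate, transferring $I_t<\infty$ from $\mathbf P$ to a ``bridge law'', does not arise. In the paper, $\mathbf P_{(t,x)}$ is the law of the time-reversed free process $x+X_{t-\cdot}$, not a bridge conditioned on a null event. Moreover, Corollary~\ref{cor:XX} is already stated under $\mathbf E_{(t,x)}$. Together with Remark~\ref{rem:self-energy}, this gives $\sum_k m_k^2=I_t<\infty$ $\mathbf P_{(t,x)}$-a.s. directly, so no disintegration or absolute-continuity argument is needed.
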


\begin{proof}
The first equality of \eqref{e:FK} follows from Propositions~\ref{prop:exis-uniq-u-n}, \ref{prop:un-u}, and~\ref{prop:Zn-Z}.  Proposition~\ref{prop:Zn-Z}  and  Lemma~\ref{ab-con} yield that \eqref{e:GMC} holds with $J=\lim_{n\to\infty }\mathbf E_{(t,x)}\left[ \exp\left\{\sum_{k=1}^n m_k(t)\xi_k-\frac12 m_k^2(t) \right\}\right]$. Together with \eqref{e:partition-M}, this yields the desired second equality.
\end{proof}

\begin{remark}\label{rem:FK}
When $\rho\in(\frac1{2H-1},2]$, we have (see Remark \ref{rem:self-energy} and Corollary \ref{cor:XX}) $$\sum_{k=1}^\infty m_k^2(t) = \int_0^t \int_0^t \delta_0(X_r) \delta_0(X_s) |r-s|^{2H-2}\dd r\dd s <\infty, \quad\fP_{(t,x)}\text{-a.s.}$$
and thus $\delta_0(X)\in \mathcal H$, $\fP$-a.s. (see also Proposition~\ref{prop:delta-L2}). By \eqref{e:delta-u-determin}, we have 
\[ \sum_{k=1}^\infty m_k(t)\xi_k=\sint \mathbf 1_{[0,t]} \delta_0(X) \xi=:\bd\big(\mathbf 1_{[0,t]} \delta_0(X)\big).\]
Therefore, the Feynman--Kac formula \eqref{e:FK} can also be written as
\begin{equation}\label{e:FK'}
\begin{aligned}
u(t,x)&=\mathbf E_{(t,x)}\left[\exp\left\{\sint\1_{[0,t]} \delta_0(X) \xi-\frac12 \int_0^t \int_0^t \delta_0(X_r) \delta_0(X_s) |r-s|^{2H-2}\dd r\dd s\right\}\right]\\
&=\mathbf E_{(t,x)}\left[ \exp\left\{\bd\big(\mathbf 1_{[0,t]} \delta_0(X)\big)-\frac12 \big\|\1_{[0,t]}\delta_0(X)\big\|^2_{\mathcal H}\right\}\right].
\end{aligned}
\end{equation}
\end{remark}

\begin{remark}\label{rmk:mollified_she}
It is natural to approximate our SHE \eqref{e:she}  with $\beta=1$ by the mollified equation 
\begin{equation}\label{e:she-e}
\partial_t u_\e(t,x)=-(-\Delta)^{\rho/2} u_\e(t,x) + u_\e(t,x) \delta_{\e}(x) \xi(t), \text{ with } u_\e(0,x)=1,
\end{equation}
where 
\begin{equation}\label{e:delta-e}
\delta_\e(x)=\frac{1}{2\e}\mathbf{1}_{\{|x|<\e\}}
\end{equation}
is an approximation of the Dirac delta function $\delta_0(x)$, as $\e\to 0$. Following the approach in \cite{S17}, we can show that the  mild Skorohod solution to \eqref{e:she-e} exists uniquely, given by 
\begin{equation}\label{e:FK-e}
\begin{aligned}
u_\e(t,x)
&=\mathbf E_{(t,x)}\left[ \exp\left\{\bd\big(\mathbf 1_{[0,t]} \delta_\e(X)\big)-\frac12 \big\|\1_{[0,t]}\delta_\e(X)\big\|^2_{\mathcal H}\right\}\right].
\end{aligned}
\end{equation}
Although $u_\e(t,x)$ in \eqref{e:FK-e} provides a formal approximation of   $u(t,x)$ in  \eqref{e:FK'}, it remains unclear whether $u_\e(t,x)$ converges to $u(t,x)$ in $L^1$ when $\rho\in(\frac1{2H-1},2]$, due to the lack of an underlying martingale structure  or a control function.    
\end{remark}

\begin{notations}\label{notations} We adopt the following notations throughout the paper.
\begin{enumerate}
\item\emph{Notations for deterministic cases:} Let $\bN $ denote the set of natural numbers without $0$, i.e., $\bN:=\{1,2,\cdots\}$ and $\bN_{0}:=\{0,1,2,\cdots\} $; for $N\in\bN$, $\llbracket N \rrbracket:=\{1,2,\cdots,N\}$; for $a\in\bR$, $\left[a\right]$ means the greatest integer that is not greater than $a$; $\|\cdot\|$ is used for the Euclidean norm; 
let\ $\boldsymbol{k}:=(k_{1},k_{2},\cdots,k_{d}), \boldsymbol{x}:=(x_{1},x_{2},\cdots,x_{d})$ etc.\ stand for vectors in $\bZ^{d}$ or $\bR^{d}$ depending on the context; we use $C$ to denote a generic positive constant that may vary in different lines;  we write $f(x)\sim g(x)$(as $x\rightarrow\infty$) if $\lim_{x\rightarrow\infty}f(x)/g(x)=1$.
\item\emph{Notations for random cases:} We use $\overset{(\text{d})}{\longrightarrow}$ to denote the convergence in distribution (also called the \emph{weak convergence}) for random variables or random vectors; for a random variable X, $\|X\|_{L^p}:=(\mathrm{E}\left[|X|^{p}\right])^{1/p}$ for $p\geq 1$; we use $\1_A$ to denote the indicator function of some set $A$, i.e., $\1_{A}(t)=1$ if $t\in A$ and $\1_{A}(t)=0$ if $t\notin A$;  for an event $A$ or a random variable $Y$  on $(\mathfrak X, \mathcal G, \mathbf P)$, we use $A'$ and $Y'$ to denote their independent copies, where $A(\omega,\omega')=A(\omega)$ ($A'(\omega,\omega')=\Omega\times A'(\omega')$, resp.) and $Y=Y(\omega,\omega')=Y(\omega)$ ($Y'=Y'(\omega,\omega')=Y'(\omega')$, resp.) can be viewed as defined on the product space $(\mathfrak X, \mathcal G, \mathbf P)^{\otimes 2}$; we use $\int_0^T f(s)\xi(s) \dd s$ or $\bd(f)$ to denote the $L^2$-Skorohod integral and $\ssint f \xi$ to denote the $L^1$-Skorohod integral. 
\end{enumerate}

\end{notations}
\subsection{The disordered pinning model and the Weinrib--Halperin prediction}\label{sec:pinning-model}~In this section, we describe the close connection between our SHE \eqref{e:she} and the disordered pinning model. We also introduce the key notions of \emph{disorder relevance} and \emph{disorder irrelevance}, along with the related Weinrib--Halperin prediction.

The disordered pinning model is a class of polymer models used to describe the interaction of a long polymer chain with a defect line (or interface) carrying random charges or impurities, represented by random environment (or disorder).
The model is defined through a Gibbs transformation of the law of a renewal process, which encodes the return times of the polymer to the defect line, and assigns random energetic rewards or penalties to these contacts. Owing to its simple formulation and rich mathematical structure, the disordered pinning model is particularly well suited for the study of phase transitions and critical phenomena, and it has attracted widespread interest in physics, chemistry, and biology. We refer to the monographs \cite{Gia07} and \cite{Gia11} for further details.

Now we define the disordered pinning model. Let $\tau=\{\tau_{i}\text{,}~i\in\bN_{0}\}$ be a renewal process on $\bN$ with $\tau_0:=0$ and i.i.d.\ increments. We use $\bP_\tau$ to denote the probability with respect to the renewal process and $\bE_\tau$ for the expectation. Without loss of generality, we assume that $\tau$ is persistent, i.e., $\bP_{\tau}(\tau_{1}<\infty)=1$. As noted in \cite{Gia11}, the transience of $\tau$ only results in a shift of certain coefficients, which is not essential. The inter-arrival law of $\tau$ is defined by
\begin{equation}\label{eq:pin_dist}
\bP_{\tau}(\tau_{1}=n)=\frac{L(n)}{n^{1+\al}},\quad\text{for}~n\geq1, 
\end{equation}where $\alpha>0$ and $L(\cdot): \bR^+\to\bR^+$ is a slowly varying function, i.e., $\lim_{t\to\infty}L(at)/L(t)=1$ for any $a>0$ (see \cite{BGT89} for more details about slowly varying functions). 

Let $\omega=\{\om_n\}_{n\in\bN_0}$ be a sequence of random variables independent of $\tau$ with probability $\bP_\omega$ and expectation $\bE_\omega$, representing the random environment (also called \emph{disorder}) and satisfying
\begin{equation}
\bE_\om[\omega_i]=0,~\mathrm{Var}(\om_i)=1,~\text{and}~\exists\beta_0>0,~\text{such that}~\bE_\om\big[e^{\beta\omega_i}\big]<\infty,~ \forall\beta\in(-\beta_0,\beta_0).
\end{equation}

The disordered pinning model with length $N$ and free boundary condition is defined via a Gibbs transformation by
\begin{equation}\label{def:Gibbs}
\frac{\dd\bP_{N,\beta}^{\omega,h}}{\dd\bP_\tau}(\tau):=\frac{1}{\zf}\exp\Big\{\sum_{n=1}^N(\beta \omega_{n}+h)\1_{\{n\in\tau\}}\Big\},
\end{equation}
where
\begin{equation}\label{e:partition-dis}
\zf=\bE_{\tau}\bigg[\exp\bigg\{\sum_{n=1}^N(\beta \omega_{n}+h)\1_{\{n\in\tau\}}\bigg\}\bigg].
\end{equation}
is the partition function,   $\beta>0$ is the inverse temperature, and $h\in\bR$ is an external field.

To see the connection between our SHE \eqref{e:she} and the disorder pinning model, note that by the Feynman--Kac formula, the mild Skorohod solution of \eqref{e:she} can be formally written as
\begin{equation}\label{mild solution}
u(t,x)=\fE_{(t,x)}\bigg[\exp\bigg\{\beta\int_0^t\delta_0(X_{r})\xi(t-r)\dd r-\frac{\beta^2}{2}\bE \bigg[\bigg(\int_0^t\delta_0(X_{r})\xi(t-r)\dd r\bigg)^2 \bigg]\bigg\}\bigg].
\end{equation}
 If  the disorder $\omega$  in \eqref{def:Gibbs} is Gaussian, then the  \emph{Wick-ordered} partition function is defined as
\begin{equation}\label{e:wick_partition}
\tilde{Z}_{N,\be}^{\om,h}:=\bE_{\tau}\bigg[\exp\bigg\{\sum_{n=1}^N (\beta\omega_n+h)\1_{\{n\in\tau\}}-\frac{\beta^2}{2}\bE_\omega\bigg[\bigg(\sum_{n=1}^N\omega_n\1_{\{n\in\tau\}}\bigg)^2\bigg]\bigg\}\bigg].
\end{equation}

Equation \eqref{mild solution} can be viewed as the continuum analogue of \eqref{e:wick_partition} with 
$h=0$, in the sense that, after a suitable scaling, the hitting times of $0$ for $X_t$ share the same inter-arrival distribution as the renewal process $\tau$. Indeed, consider a heavy-tailed random walk $S=\{S_i, i\in\mathbb{N}_0\}$ belonging to the domain of attraction of a $\rho$-stable law for some $\rho\in(0,2]$. By Lemma~\ref{lem:random walk}, the hitting times of $0$ for $S$ generate a renewal process whose inter-arrival distribution is given by \eqref{eq:pin_dist} when $\rho=\frac{1}{1-\alpha}\in(1,2]$ for $\alpha\in(0,\tfrac12]$. This is precisely the regime in which we establish the existence and uniqueness of an $L^1$-solution to the SHE \eqref{e:she}; see Theorem~\ref{thm-well-posedness}. In contrast, for $\rho\in(0,1]$, no renewal process of the form \eqref{eq:pin_dist} arises from a random walk in the domain of attraction of a $\rho$-stable law. Therefore, the SHE \eqref{e:she} can be viewed not only as a continuum analogue but also as a genuine extension of the disordered pinning model \eqref{e:wick_partition}.

As a disordered system, a fundamental question for the disordered pinning model is whether an arbitrarily small amount of disorder can significantly alter its behavior. The random environment is coupled to the inverse temperature $\beta$, which quantifies the strength of the disorder. If, for every $\beta>0$, the model exhibits behavior that is essentially different from that of the homogeneous case $\beta=0$, then the disorder is said to be \emph{disorder relevant}. In contrast, if for sufficiently small $\beta>0$ the model remains comparable to the homogeneous one, then the disorder is said to be \emph{disorder irrelevant}.

For the disordered pinning model in i.i.d.\ random environments, the celebrated Harris criterion~\cite{Harris74} asserts that the disorder is relevant when $\alpha>\tfrac12$ and irrelevant when $\alpha<\tfrac12$. In the marginal case $\alpha=\tfrac12$, the Harris criterion is inconclusive. Over the past two decades, this criterion--—including the marginal case---has been fully confirmed through a series of works, which we briefly review below.

Referring to \cite{Gia11}, the free energy of the system is defined by
\begin{equation}\label{def:free_energy}
F(\beta,h):=\lim\limits_{N\to\infty}\frac{1}{N}\log Z_{N,\beta}^{\omega,h}\xlongequal{\text{a.s and in}~L^1}\lim\limits_{N\to\infty}\frac{1}{N}\bE_\omega\Big[\log Z_{N,\beta}^{\omega,h}\Big],
\end{equation}
where the limit is known to exist. It can be shown that $F(\beta,h)$ is non-negative and non-decreasing in $h$. Let
\begin{equation}\label{def:h_c}
h_c(\beta):=\inf\{h\in\R:, F(\beta,h)>0\}=\sup\{h\in\R:, F(\beta,h)=0\}
\end{equation}
denote the \emph{critical point} of $F(\beta,h)$. Moreover, it can be shown that $h_c(\beta)\in(-\infty,+\infty)$.

For the homogeneous model, $F(0,h)\approx(h-h_c(0))^{\theta(\alpha)}$ with $\theta(\alpha)=1\vee\frac{1}{\alpha}$, which is called the \emph{critical exponent}. For $\beta>0$, the following picture is known.
\begin{itemize}
\item For $\alpha<\frac12$ and sufficiently small $\beta>0$, $F(\beta,h)\approx(h-h_c(\beta))^{\theta(\alpha)}$ (see \cite{Lac10}).
\item For any $\alpha>0$ and any $\beta>0$, $F(\beta,h)\leq C(h-h_c(\beta))^2$ (see \cite{GT06}).
\item For $\alpha=\frac12$, disorder relevance/irrelevance depends on the finer property of the slowly varying function $L(\cdot)$ in \eqref{eq:pin_dist}, and the disorder is relevant iff  $h_c(\beta)>h_c(0)$ for any $\beta>0$ (see \cite{Lac10,BL18}). 
\end{itemize}
The above results confirm the Harris criterion by exhibiting a change in the critical exponent for $\alpha>\tfrac12$ and a shift of the critical point in the marginal case $\alpha=\tfrac12$.

Although the disordered pinning model in i.i.d.\ random environments has been well studied, much less is known about the model in correlated environments. A natural choice for the correlation of the random environment is
\begin{equation}\label{def:cor_dis_env}
\gamma(n-m):=\bE_\omega[\omega_n\omega_m]=|n-m|^{-p}\wedge1,\quad\text{for}~n,m\in\bN_0,
\end{equation}
where $p>0$. There is a counterpart to the Harris criterion for correlated random environments, known as the Weinrib--Halperin prediction~\cite{WH83}. It conjectures that, for the disordered pinning model in the setting \eqref{def:cor_dis_env}, the disorder is relevant when $\alpha>\min(p,1)/2$ and irrelevant when $\alpha<\min(p,1)/2$. Note that when $p>1$, corresponding to sufficiently weak correlations, the Weinrib--Halperin prediction coincides with the Harris criterion.

The first study on the disordered pinning model in correlated random environments was carried out in \cite{Ber13}. It shows that if $p>1$, then for any $\alpha>0$ and any $\beta>0$, $h_c(\beta)\in(-\infty,+\infty)$ and $F(\beta,h)\leq C(h-h_c(\beta))^2$, which confirms disorder relevance for $\alpha>\frac12$, as in the case of i.i.d.\ random environments; however, for $\alpha<\frac12$, whether disorder is irrelevant still remains open. Furthermore, it has been shown in \cite{Ber13} that if $p\leq1$, then for any $\beta>0$ and any $h\in\R$,  $F(\beta,h)>0$. Hence, it is impossible to characterize disorder relevance/irrelevance by comparing the critical exponent with the homogeneous model. In the analysis in \cite{Ber13}, whether the correlation $\gamma(n)$ is summable plays a key role (i.e., whether $p>1$). By relaxing the specific assumption \eqref{def:cor_dis_env} and assuming only that $\sum_{n\ge 1}\gamma(n)<+\infty$, \cite{BP15} showed that if $\mathbf{E}[\tau_1]<+\infty$ (which requires $\alpha\ge 1$), then disorder is relevant in the sense of a shift of the critical point.

For the disordered pinning model in correlated random environments with $p<1$, the critical exponent of $F(\beta,h)$ does not exist by \cite{Ber13}, necessitating alternative approaches to study disorder relevance. A natural framework is the \emph{intermediate disorder regime}, which builds on the fact that when disorder is relevant, the environment influences the model even for arbitrarily small $\beta$. By tuning $\beta=\beta_N$ down to zero as $N\to\infty$, one can obtain a non-trivial random limit.  This approach was first developed in~\cite{AKQ14} for the $1+1$ directed polymer in i.i.d.\ environments  and subsequently extended in~\cite{CSZ16} to the models including the Ising model, the long-range directed polymer, and the pinning model, in i.i.d.\ random environments. Its applicability extends to a wide range of correlated environments, including polymer models with spatially colored \cite{rang20, cg23}, temporally colored \cite{rsw24}, or space–time colored environments \cite{SSSX21}. Inspired by these works, the disordered pinning model under assumption \eqref{def:cor_dis_env} with $p<1$ was recently analyzed in \cite{lswz}, as we discuss below.

We assume that $\omega$ is a Gaussian environment, as in \cite{Ber13,BP15}, since  the model is otherwise technically intractable. Let $p=2-2H$ with $H\in(\frac12,1)$, where $H$ is the Hurst parameter; this range ensures that $p\in(0,1)$. According to the Weinrib--Halperin prediction, disorder is expected to be relevant when $\alpha+H>1$. For a simple illustration here, we set $h=0$ in \eqref{e:wick_partition} and $L(\cdot)\equiv1$ in \eqref{eq:pin_dist}. By tuning the inverse temperature as $\beta_N=\hat{\beta}N^{-(\alpha+H-1)}$, we have established the following results in \cite{lswz} under the assumption $\alpha+H>1$.
\begin{itemize}
\item When $\alpha>\frac12$,  for any $\hat{\beta}>0$, the Wick-ordered partition function $\tilde{Z}_{N,\beta_N}^{\omega}$ in \eqref{e:wick_partition} converges in distribution to a non-trivial random variable $\tilde{\mathcal{Z}}_{\hat{\beta},\alpha}$ as $N\to\infty$, where $\tilde{\mathcal{Z}}_{\hat{\beta},\alpha}$ is an $L^2$-convergent series of multiple Skorohod integrals.
\item When $\alpha=\frac12$,  there exists  $\hat{\beta}_c>0$, such that for $\hat{\beta}\in(0,\hat{\beta}_c)$, $\tilde{Z}_{N,\beta_N}^{\omega}$ converges in distribution to $\tilde{\mathcal{Z}}_{\hat{\beta},\frac12}$ which is an $L^2$-convergent series only for $\hat{\beta}\in(0,\hat{\beta}_c)$.
\item When $\alpha<\frac12$, $\sup_{N}\bE_\omega[(\tilde{Z}_{N,\beta_N}^{\omega})^2]=+\infty$.
\end{itemize}

These results suggest that  an extra condition $\alpha\geq\frac12$ is necessary in order to have an $L^2$-integrable random variable as the weak limit of the rescaled partition function. In particular, the conditions $H>\frac12$ and $\alpha\geq\frac12$ automatically imply $\alpha+H>1$, which is the condition for disorder relevance predicted by Weinrib and Halperin. Interestingly, we showed that as long as $\alpha+H>1$,  each multiple Skorohod integral in the series expansion of $\tilde{\mathcal Z}_{\hat \beta, \alpha}$ is well-defined. However, we need an extra condition $\alpha\geq\frac12$ in order to have the $L^2$-convergence of the series.

This is a new phenomenon that, to the best of our knowledge, has not been observed previously. For the above-mentioned polymer models with random environments that are independent in time, the intermediate disorder regime—which is an $L^2$-regime—appears to coincide with the disorder-relevant regime. In contrast, for the disordered pinning model in a correlated random environment, the $L^2$-regime is a proper subset of the (predicted) full disorder-relevant regime (we will discuss this in more detail in Section~\ref{sec:spatial_noise}). On the other hand, the condition $\alpha+H>1$ is necessary in order to have $\beta_N\to0$, and it provides the correct scaling under which each multiple Skorohod integral is convergent. Consequently, there is currently no strong evidence contradicting the Weinrib--Halperin prediction. Addressing the case $\alpha+H>1$ with $\alpha<\tfrac12$ therefore requires the development of new strategies.

In this paper, we partially solve this problem under the assumption $\rho\in(\frac{1}{2H-1},2]$ (see Theorem~\ref{thm-well-posedness}), which is equivalent to $\alpha+2H>2$. We show that our SHE \eqref{e:she}, which is a continuum analogue to the disorder pinning model, has an $L^1$-solution. In fact, we have already proved in \cite{lswz} that $\{\tilde{Z}_{N,\beta_N}^{\omega}\}_N$ is uniformly integrable. However, in the absence of a martingale structure, it is difficult to establish $L^1$-convergence, which is more convenient to work with in the continuum setting of SHEs.

To conclude this subsection, we recall that in our previous work \cite{lswz} we also showed that, when $\alpha+H>\frac32$, tuning $\beta_N$ in the same manner as above yields convergence in distribution of the ordinary partition function $Z_{N,\beta_N}^\omega$ in \eqref{e:partition-dis} to a non-trivial limit $\mathcal{Z}_{\hat{\beta}}$. This limit can be represented as an $L^2$-convergent series of multiple Stratonovich integrals. However, such multiple Stratonovich integrals are not well defined when $\alpha+H<\frac32$. Since throughout this paper we assume $\alpha<\frac12$ and $H<1$, the Stratonovich regime is therefore excluded from our analysis.

\subsection{Discussion on some other models}\label{sec:spatial_noise} As discussed in Section~\ref{sec:pinning-model}, for the disordered pinning model, whether the disorder-relevant regime coincides with the $L^2$-regime depends on the independence of the underlying (temporal) random environment. More generally, in other disordered models, the relationship between the disorder-relevant regime and the $L^2$-regime is likewise governed by the temporal structure of the random environment; this issue will be discussed in detail in this subsection.

We consider two other classes of models in which the random environments involve spatial noise. The first class consists of  SHEs with space–time noise and its discrete analogue, the directed polymer model in random environments, both of which are closely related to the disordered pinning model. The second class is the parabolic Anderson model with purely spatial noise, as the approach developed in this paper is inspired by a recent breakthrough \cite{qrv} on the two-dimensional parabolic Anderson model. We note that a purely spatial noise can be viewed as a space–time noise with fully correlated temporal randomness.

The SHE with time-space noise is defined by
\begin{equation}\label{e:she_time_space}
\partial_t v(t,x)=-(-\Delta)^{\rho/2}u(t,x)+\beta v(t,x)\dot{W}(t,x),
\end{equation}
where $\dot W$ is a Gaussian noise with covariance 
\begin{equation}\label{e:time_space_cov}
\bE[\dot{W}(t,x)\dot{W}(s,y)]=\gamma_1(t-s)\gamma_2(x-y).
\end{equation}
By the Feynman--Kac formula, its mild Skorohod solution is given by
\begin{equation}
v(t,x)=\mathbf{E}_X^x\bigg[\exp\bigg\{\beta\int_0^t\dot{W}(t-s,X_s)\dd s-\frac{\beta^2}{2}\bE_W\bigg[\bigg|\int_0^t\dot{W}(t-s,X_s)\dd s\bigg|^2\bigg]\bigg\}\bigg].
\end{equation}
Correspondingly, the Wick-ordered partition function of the directed polymer model is defined by
\begin{equation}\label{e:dp_partition}
\mathbf{Z}_{n,\beta}^{\omega}:=\bE_{S}\bigg[\exp\bigg\{\beta\sum_{i=1}^n \omega_{i,S_i}-\frac{\beta^2}{2}\bE_\omega\bigg[\bigg(\sum_{i=1}^n \omega_{i,S_i} \bigg)^2\bigg]\bigg\}\bigg],
\end{equation}
where $S$ is a random walk on $\mathbb{Z}^d$ with expectation $\bE_S$, and $\omega:=(\omega_{i,x})_{i\in\bN,x\in\bZ}$ is a family of random variables representing random environments with the same covariance structure as \eqref{e:time_space_cov}, that is, $\bE_\omega[\omega_{i,x}\omega_{j,y}]=\gamma_1(n-m)\gamma_2(x-y)\wedge1$.

If $\dot W$ is white noise or if $\omega$ is a family of i.i.d.\ random variables, then both the SHE \eqref{e:she_time_space} and the directed polymer model \eqref{e:dp_partition} have been extensively studied (see \cite{Z24} for a recent survey). Moreover, existing results indicate that, in the i.i.d.\ setting, the disordered pinning model and the directed polymer model are closely comparable. More precisely, second-moment computations of their partition functions show that a key object for the disordered pinning model is the renewal process $\tilde{\tau}=\tau\cap\tau'$, where $\tau'$ is an independent copy of $\tau$, while for the directed polymer model the corresponding object is the set of collision times between a random walk $S$ and its independent copy $S'$, which induces a renewal process $\bar{\tau}$. As long as $\tilde{\tau}$ and $\bar{\tau}$ have the same distribution, the two models exhibit many common features. Interestingly, the $L^2$-regime identified through second-moment analysis coincides with the disorder-relevant regime predicted by the Harris criterion. Moreover, the intermediate disorder regime exists on the entire disorder-relevant regime (see \cite{AKQ14,CSZ16}), and this remains true even in the marginally relevant case (see \cite{CSZ17}).


Next, we show that the temporal structure of the random environment---regardless of whether the spatial noise is independent---governs the relationship between the $L^2$-regime and the disorder-relevant regime. We take the SHE  \eqref{e:she_time_space} as an example, for which it is convenient to apply a \emph{renormalization group transformation} to understand the disorder relevance/irrelevance. This transformation is analogous to a \emph{coarse-graining} procedure: we rescale time and space and examine the model on progressively larger scales (also referred to as \emph{zooming out}), then observe whether the noise amplifies or diminishes. Disorder is deemed relevant if the noise amplifies under this procedure, and irrelevant if it diminishes.

In our covariance setting \eqref{e:time_space_cov}, we assume that $\gamma_i(t) = |t|^{-\theta_i}$ for $i=1,2$. If $\gamma_i(\cdot)$ is a Dirac delta function, i.e., if the noise is white on the corresponding coordinate, then we interpret $\theta_i$ as the dimension $d$ (with time having dimension $1$). This interpretation is consistent with the scaling property of the Dirac delta function. Then for any $\e>0$, we have 
\begin{equation}
\bE[\dot{W}(\e^{-1}t, \e^{-a}x)\dot{W}(\e^{-1}s, \e^{-a}y)]=\e^{\theta_1+a\theta_2}|t-s|^{-\theta_1}|x-y|^{-\theta_2},
\end{equation}
which implies that $\dot{\widetilde{W}}(t,x):=\e^{-(\theta_1+\frac12a\theta_2)/2}\dot{W}(\e^{-1}t,\e^{-a}x)\xlongequal{d}\dot{W}(t,x)$ is another noise with the same covariance structure \eqref{e:time_space_cov}. Write $\tilde{v}(t,x):=v(\e^{-1}t,\e^{-a}x)$, and we have 
\begin{equation}
\e\partial_t\tilde{v}(t,x)=-\e^{a\rho}(-\Delta)^{\rho/2}\tilde{v}(t,x)+\beta\e^{(\theta_1+a\theta_2)/2}\tilde{v}(t,x)\dot{\widetilde{W}}(t,x).
\end{equation}
For a proper scaling, we should have that $a\rho=1$, i.e., $a=\frac1\rho$. Dividing both sides by $\e$, we have a scaling factor $\e^{(\theta_1+\theta_2/\rho-2)/2}$ in front of the noise term. As $\e\to0$, if $\theta_1+\theta_2/\rho>2$, then the noise term vanishes, and the disorder is irrelevant; if $\theta_1+\theta_2/\rho<2$, then the noise term explodes, and the disorder is relevant; the case $\theta_1+\theta_2/\rho=2$ is marginal.

The $1+1$ directed polymer model with random environments that are correlated in space but independent in time was first studied in \cite{rang20} and later extended to a more general framework in \cite{cg23}. The case where the random environments are correlated in time but independent in space was studied in \cite{rsw24}. In \cite{cg23}, the parameters are $\theta_1 = 1$ and $\theta_2 = 2r - 1$ with $r=\frac32-H\in (\frac12, 1)$ (recall that $H\in(\frac 12,1)$). According to the above renormalization group transformation, the condition for disorder relevance is $\frac12<r < \frac{1+\rho}{2}$. Exactly under this condition, the authors showed
that the partition function converges in distribution to an $L^2$-integrable random variable. In \cite{rsw24}, the parameters are $\theta_1 = 2-2H$ and $\theta_2 = 1$, with $H \in (\frac12,1]$, and the condition for disorder relevance is $H > \frac{1}{2\rho}$.
Under the slightly stronger assumptions $\rho \in (1,2]$ and $H \in (\frac12,1]$, which automatically imply $H > \frac{1}{2\rho}$, the authors showed that the rescaled partition functions converge in distribution to an $L^2$-integrable random variable. The additional assumption $\rho > 1$ is imposed to ensure $L^2$-integrability, and plays a role analogous to the condition $\alpha > \frac12$ in our previous work \cite{lswz}.

Note that the Hurst parameter $H=\tfrac12$ corresponds to white noise (in accordance with the Harris criterion), whereas $H<\tfrac12$ represents substantially rougher noise, whose covariance must be interpreted as a generalized function. The case $H<\tfrac12$ is considerably more involved (see, e.g., \cite[Section~5]{Nua06}), and the frameworks developed in \cite{rang20,cg23,rsw24,lswz} no longer apply. For this reason, we restrict our attention to the case $H\ge \tfrac12$. For the directed polymer model, when the random environment is independent in time, the $L^2$-regime coincides with the disorder-relevant regime, even if the environment exhibits spatial correlations, as shown in \cite{rang20,cg23}. In contrast, when the environment is correlated in time, a gap emerges between the $L^2$-regime and the (predicted) disorder-relevant regime, even in the presence of spatial independence; see \cite{rsw24}.

Now, we discuss the parabolic Anderson model,  defined by the SHE on $\R^d$:
\begin{equation}\label{e:PAM}
\partial_t v(t,x)=\Delta v(t,x)+\beta v(t,x)\dot{W}(x),
\end{equation}
where $\dot W(x)$ is a Gaussian noise satisfying
\begin{equation}
\bE[\dot{W}(x)\dot{W}(y)]=\delta(x-y).
\end{equation}
Here the noise is purely spatial and white. A similar renormalization group transformation yields
\begin{equation}
\partial_t \tilde{v}(t,x)=\Delta \tilde{v}(t,x)+\beta\e^{\frac{d}{4}-1}\tilde{v}(t,x)\dot{\widetilde{W}}(x),
\end{equation}
which shows that the critical dimension is 
$d=4$ and that the disorder is relevant if 
$d<4$. For $d=1$, it is well known that the solution $v(t,x)$ is well defined and $L^p$-integrable for any $p>0$ (see \cite{cm94}). For $d=2$, the existence of an $L^2$-solution was established, but only locally in time (see \cite{NZ89,hu02}). A global solution on the entire space $[0,+\infty)\times \R^d$ for
$d=2,3$ was constructed later in \cite{ML15,ML18} using the theory of regularity structures. However, the integrability properties of the solution were not specified. Very recently, it was shown in \cite{qrv} that for $d=2$, a global $L^1$-solution exists by extending the classical theory of the Skorohod integral to the $L^1$-regime.

The parabolic Anderson model is indeed driven by noise with strong temporal correlation. Owing to the absence of a temporal coordinate, the noise exhibits perfect temporal correlation, in the sense that the same spatial noise is present at all time. In this setting, we also observe that the $L^2$-regime for the parabolic Anderson model forms only a proper subset of the (predicted) disorder-relevant regime.

Finally, we turn to our SHE \eqref{e:she} and to the disordered pinning model introduced in Section~\ref{sec:pinning-model}. By applying the same renormalization group transformation, we obtain that for $\tilde u(t,x) = u(\e^{-1}t, \e^{-1/\rho}x)$, 
\begin{equation}
\partial_t\tilde{u}(t,x)=-(-\Delta)^{\rho/2}\tilde{u}(t,x)+\beta\e^{1/\rho-H}\tilde{u}(t,x)\delta(x)\tilde{\xi}(t),
\end{equation}
where $\tilde \xi(t):= \e^{H-1}\xi(\e^{-1}t)$
 is another temporal noise with the same distribution as $\xi(t)$. This scaling indicates that the condition for disorder relevance is $1/\rho-H<0\Longleftrightarrow \alpha+H>1$, which coincides with the Weinrib--Halperin prediction. Recall that we have shown in \cite{lswz} that the $L^2$
-regime exists if and only if $\alpha\ge \frac12$, and therefore the $L^2$-regime cannot be extended to the entire disorder-relevant regime.

In summary, the above investigation highlights that the presence or absence of temporal correlations in the noise plays a decisive role in determining whether the $L^2$-regime coincides with the disorder-relevant regime. For models driven by noise that is white in time, the entire disorder-relevant regime lies within the $L^2$-regime. By contrast, when the noise is colored in time, the situation becomes more intricate and interesting. As shown in \cite{qrv} and in the present work, a genuine $L^1$-regime may emerge in the disorder-relevant phase. Moreover, it remains an open question whether the $L^1$-regime can be extended to the whole disorder-relevant regime. In \cite{ML18}, a solution to the three-dimensional parabolic Anderson model was constructed, but its integrability properties were not specified. For the stochastic heat equation \eqref{e:she} studied in this paper, the regime
$$\Big\{(\alpha,H):\alpha<\frac12, \alpha+H>1,\alpha+2H<2\Big\}$$
remains open, and will be further discussed in Section~\ref{sec:con_dis}. Similarly, for the directed polymer model studied in \cite{rsw24}, where the random environment is independent in space but correlated in time, an analogous non-$L^2$ regime
$$\Big\{(\rho,H):\rho\in\Big[\frac12,1\Big),H\in\Big(\frac12,1\Big],H>\frac{1}{2\rho}\Big\}$$
also remains open.

\subsection{Contributions and discussions}\label{sec:con_dis} In this subsection, we summarize our main contributions of this paper and discuss related open questions.

Below, we outline the main contributions of this paper.
\begin{enumerate}
\item We broaden the scope of the disordered pinning model by investigating its continuum analogue, the SHE \eqref{e:she}. While the special case $\rho=2$ and $H=\frac12$ was previously studied in \cite{WY24}, the general continuum formulation has not yet been fully explored. For $\rho\in(1,2]$, the SHE \eqref{e:she} serves as the exact continuum counterpart of the disordered pinning model with $\alpha=1-\frac{1}{\rho}\in(0,\frac12]$. In contrast, for $\rho\in(0,1]$, there is no corresponding disordered pinning model with renewal process \eqref{eq:pin_dist} that matches the SHE \eqref{e:she}. We establish the well-posedness of the SHE by constructing a global $L^1$-solution for $\rho\in(\frac{1}{2H-1},2]$.

\item
To the best of our knowledge, this is the first construction of a global $L^1$-solution to an SHE in the absence of a corresponding local $L^2$-solution. Our work is inspired by the methods and results of \cite{qrv}, where a global $L^1$-solution for the two-dimensional parabolic Anderson model was obtained, with techniques that crucially rely on the existence of a local $L^2$-solution. Furthermore, we establish the positivity of our $L^1$-solution, which allows us to apply the Cole–Hopf transformation, that is, to take the logarithm of the solution, in order to study the free energy of the system.

\item 

Our study advances the understanding of the Weinrib--Halperin prediction. In earlier works, Quentin \cite{Ber13} showed that for random environments with sufficiently weak correlations ($p>1$ in \eqref{def:cor_dis_env}), the Weinrib--Halperin prediction coincides with the Harris criterion. However, the classical approach to the disordered pinning model breaks down when $p\le 1$, since the free energy is always strictly positive and, consequently, no critical exponent exists.

In our prior work \cite{lswz}, we extended the analysis to the case $p<1$. In particular, we identified the intermediate disorder regime that confirms disorder relevance under the conditions $\alpha>\frac12$ and $H\in(\frac12,1)$, which is stronger than the condition $\alpha+H>1$ proposed by the Weinrib--Halperin prediction. In the present paper, we further extend the disorder-relevant regime to
\begin{equation*}
\Big\{(\alpha,H): \alpha<\frac12,\ H\in\Big(\frac12,1\Big),\ \alpha+2H>2\Big\},
\end{equation*}
thereby partially closing the gap left by previous studies.
\end{enumerate}

To conclude the introduction, we present a list of open questions concerning the Weinrib--Halperin prediction in our SHE \eqref{e:she} and the disordered pinning model in correlated random environments.

\begin{enumerate}
\item  To fully confirm that the regime $\alpha+H>1$ is disorder relevant according to the Weinrib--Halperin prediction, it remains to show that in the regime
\begin{equation*}
\Big\{(\alpha,H):\alpha+H>1, \alpha+2H\le 2, \alpha<\frac12\Big\},
\end{equation*}
the model is indeed disorder relevant. However, our method relies on the finiteness of the  self-energy of the local time (see \eqref{e:I-t}) and fails in this case, because the self-energy is almost surely finite if and only if $\alpha+2H>2$ (see Lemma~\ref{lem:infty}). As a consequence, we cannot achieve the uniform integrability of the projected solutions $\{u_n(t,x)\}_{n\geq1}$ (see Section \ref{sec: L1-sol}).

\item The disorder-irrelevant regime $\alpha+H<1$ predicted by the Weinrib--Halperin criterion remains largely unexplored. Previous works \cite{Ber13,lswz} only established that the condition $\alpha>\frac12$ ensures disorder relevance. In contrast, concerning the disorder-irrelevant regime, neither the general case $\alpha+H<1$ nor the special case with the additional assumption $\alpha<\frac12$ has been investigated. For $p>1$ in \eqref{def:cor_dis_env}, it was shown in \cite{Ber13} that the critical point $h_c(\beta)$ (see \eqref{def:h_c}) exists finitely. Therefore, it is possible  to establish disorder irrelevance for $p>1$ and $\alpha<\frac12$ by showing that the free energy (see \eqref{def:free_energy}) has the asymptotics $F(\beta,h)\approx(h-h_c(\beta))^{1/\alpha}$ for small enough $\beta>0$, as shown in \cite{Lac10}, and thus the disordered model and the homogeneous model have the same critical exponent. For $p\in(0,1)$, i.e., $H\in(\frac12,1)$, since $F(\beta,h)$ is always positive and there is no critical exponent, a new perspective is required to study disorder irrelevance.

\item The case $\alpha+H=1$ corresponds to the marginal regime. The special case $\alpha=H=\frac12$ has been studied extensively in \cite{BL18,BL17,CSZ17,CSZ23,WY24}. Note that $H=\frac12$ corresponds to white noise. Consequently, all of the aforementioned works fall within the framework of the Harris criterion, and it is therefore not surprising that the entire disorder-relevant regime they considered also coincides with the $L^2$-regime. However, existing approaches developed for marginally relevant polymer models do not appear to be suitable when disorder is marginally relevant in models with correlated noise, since the $L^2$-regime already disappears when $\alpha+2H>2$  with $\alpha<\frac12$.
\end{enumerate}

\subsection{Organization of the paper}
The remainder of this paper is organized as follows.

In section \ref{sec:pre}, we provide some necessary preliminaries, including the classical Malliavin calculus and
the theory of the $L^1$-integrable Skorohod integral, which will be utilized throughout the paper. The proof of our main result, Theorem~\ref{thm-well-posedness}, occupies Sections~\ref{Sec: Lp} and~\ref{sec: L1-sol}. Specifically, Section \ref{Sec: Lp} handles the $L^p$-solutions ($p>1$) using the classical Malliavin calculus, while Section \ref{sec: L1-sol} extends the analysis to the $L^1$-solution using the $L^1$-Skorohod integral developed in~\cite{qrv}.
 Section \ref{sec:0-1} is devoted to the proof of Theorem \ref{thm:positivity}, concerning the strict positivity of the solution. Finally, technical lemmas are collected in Appendix~\ref{sec:appendix}.


\section{Preliminaries}\label{sec:pre}

In this section, we first review some preliminaries of the classical Malliavin calculus and refer readers to~\cite{Nua06,Nua18} for further details. We then recall the theory of the $L^1$-integrable Skorohod integral developed in~\cite{qrv}.

\subsection{Malliavin calculus} \label{sec:Malliavin}

Let $\xi$ be the Gaussian noise with the covariance function given by~\eqref{e:covariance}.
Denote by $\mathcal H$ the Hilbert space associated with $\xi$, which is the completion of the space of  smooth functions with compact support under the inner product
\begin{equation}\label{e:innerproduct}
\langle f, g\rangle_{\mathcal H} =\int_0^T\int_0^T f(s) g(t) |s-t|^{2H-2} dsdt.
\end{equation}
To see that $\langle\cdot,\cdot\rangle_{\cH}$ is indeed an inner product, we refer to \cite[Example 3.5.3]{Nua18}. The norm on $\cH$ is denoted by $\|\cdot\|_{\cH}$. Suppose $\{e_k(t), t\in[0,T]\}_{k\geq1}$ is an orthonormal basis of $\mathcal{H}$ consisting of  bounded continuous functions. Thus, for each $f\in \mathcal H$, we have $f=\sum_{k=1}^\infty \langle f, e_k\rangle_{\mathcal H} e_k$.

Let $\{\xi(f),f \in \cH \}$ be an isonormal Gaussian process with covariance  $\E[\xi(f) \xi(g)] = \langle f, g\rangle_\mathcal H.$  For each $f \in \mathcal H$, the random variable $\xi(f)$ is called the \emph{Wiener integral} of $f$, and is usually written in the form of a stochastic integral:
\[
\int_0^Tf(s)\xi(s)\dd s:= \xi(f). 
\]
For  $0\not\equiv f\in \cH$,  the $n$-th \emph{multiple Wiener integral} of $f^{\otimes n}$ is defined by  
\begin{equation}\label{Wiener}
    \I_n(f^{\otimes n}):= \|f\|_{\mathcal H}^n
H_n\big(\xi(f)/\|f\|_{\mathcal H}\big),
\end{equation} 
where 
\begin{equation*}
H_n(x):= (-1)^n\mathrm{e}^{x^2/ 2}\frac{\dd^n}{\dd x^n}\mathrm{e}^{-x^2/ 2}, \quad  x\in \bR,
\end{equation*}
is the $n$-th \emph{Hermite polynomial}. Let ${\cH}^{\hat \otimes n}$ be the subspace of ${\cH}^{\otimes n}$ containing only symmetric functions. For  $f\in {\cH}^{\hat\otimes n}$,  the $n$-th \emph{multiple Wiener integral} $\I_n(f)$ can be defined by \eqref{Wiener} via a limiting argument, noting that linear combinations of the functions of the form $f^{\otimes n}$ are dense in $\mathcal H^{\hat \otimes n}$. Moreover, 
$$
{\mathbb E}[\I_m(f)\I_n(g)]=n!\langle f,g\rangle_{\cH^{\otimes n}}\mathbf1_{\{m=n\}}, ~ {\rm for }~ f \in {\cH}^{\hat\otimes m}, g\in {\cH}^{\hat\otimes n}.
$$
For general $f\in {\cH}^{\otimes n}$, we set $\I_n(f):= \I_n(\hat f),$ where $\hat  f$ is the symmetrization of $f$ (see \eqref{e:fn}).  

Let $\mathcal{S}$ denote the class of smooth random variables of the form
\begin{equation}\label{smooth-rv}
F=f(\xi(h_1),\cdots ,\xi(h_n)), \, \text{ for } h_1,\dots,h_2\in\mathcal{H},
\end{equation} 
where $f:\bR^n\to\bR$ is a smooth function such that $f$ and all its partial derivatives have  at most polynomial growth. 

\begin{definition}[Malliavin derivative]\label{def-D}
Let $F$ be a smooth random variable of the form \eqref{smooth-rv}. The Malliavin derivative of $F$ is the $\mathcal{H}$-valued random variable given by
\begin{equation}\label{smooth-D}
DF=\sum_{i=1}^n\frac{\partial f}{\partial x_i}(\xi(h_1),\cdots ,\xi(h_n))h_i.
\end{equation} 
\end{definition}
Note that $D$ is a linear operator from $\cS\subset L^2(\Xi)$ into $L^2(\Xi;\cH)$. Let $\mathbb{D}^{1,p}$ with $p>1$ denote the closure of
$\mathcal{S}$ under the norm
$$\Vert F\Vert_{1,p}=\Big(\bE[|F|^p]+\bE[\Vert DF\Vert^p_{\mathcal{H}}]\Big)^{1/p}.$$

\begin{definition}[$L^2$-Skorohod integral/divergence]\label{def: Skorohod-l2}
Suppose $u\in L^2(\Xi;\mathcal{H})$, that is, $u$ is an $\mathcal{H}$-valued square-integrable random variable. We say that  $u$ is ($L^2$-)Skorohod integrable  if there exists an $L^2$-integrable random variable denoted by $\bd(u)$ such that for any $F\in \mathbb{D}^{1,2}$, we have
\begin{align}\label{e:L2-dual}
\mathbb{E}\left[\bd(u)F\right]=\mathbb{E}\left[\langle DF, u\rangle_{\mathcal{H}}\right] 
\end{align}
with $|\mathbb{E}\left[\bd(u)F\right]|\leq c\Vert F\Vert_{L^2(\bP)}$, where $c$ is a constant depending on u. We call $\bd(u)$ the $L^2$-Skorohod integral of $u$ or the divergence of $u$. 
\end{definition}
In particular, we have that $\bd(h)=\xi(h)$ is the Wiener integral for $h\in\mathcal H$. The set of $L^2$-Skorohod integrable processes is denoted by $\Dom(\bd)$.   For each $u\in \Dom(\bd)$, we also write its divergence as an integral: 
\[\int_0^T u(s) \xi(s) \dd s :=\bd(u).\]
  
\subsection{\texorpdfstring{$L^1$}{}-Skorohod integral}\label{sec:Malliavin-L1}
We now collect some essential elements of the theory of the 
$L^1$-Skorohod integral developed in~\cite{qrv}. Recall \eqref{e:xi-k}:
\begin{equation*}
\xi_k:=\xi(e_k)=\int_0^T e_k(s) \xi(s) \dd s,
\end{equation*}
where  $\{e_k\}_{k\in\mathbb N}$ is an orthonormal basis of $\mathcal H$. 
Then $\{\xi_k, k\in \mathbb N\}$ is a family of  i.i.d.\ standard Gaussian random variables.
Let $\mathcal{F}_n=\sigma(\xi_1,\dots,\xi_n)$ denote the $\sigma$-field generated by $\{\xi_1, \dots, \xi_n\}$. 
\begin{definition}[$L^1$-Skorohod integral for random sequences]\label{def-Skorohod}
Let $D=\{1,\cdots,\mathrm{max(D)}\}$ be finite or let $D=\bN$, and $U=\left(u_k\in L^1, k\in D\right)$ be a sequence of random variables measurable with respect to $\sigma(\xi_k:k\in D)$. We say that $U$ is $L^1$-Skorohod integrable if there exists a random variable $S \in L^1$
 such that for any $n\in D$ and any bounded differentiable $f:\mathbb{R}^n\to\mathbb{R}$ with bounded $\nabla f$, we have
$$ \bE\left[Sf(\xi_1,\cdots,\xi_n)\right]=\bE\left[U\cdot \nabla f(\xi_1,\cdots,\xi_n)\right],$$ then $S$ is called the $L^1$-Skorohod integral of $U$ and is denoted by 
\[S:=\sint U\xi.\]
\end{definition}

\begin{definition}[$L^1$-Skorohod integral for random processes]\label{def:Skorohod-integral}

Let $D=\{1,\cdots,\mathrm{max(D)}\}$ be finite or let $D=\bN$, and  $u$ be a random process on $[0,T]$ measurable with respect to $\sigma(\xi_k:k\in D)$. Denote
$$u_k:=\langle u, e_k\rangle_{\mathcal H}=\int_0^T\int_0^T e_k(t)u(s) |t-s|^{2H-2}\dd t\dd s.$$   
We say that $u$ is $L^1$-Skorohod integrable with Skorohod integral $S$,  if $u_k\in L^1$ for each $k\in D$  and $U=(u_k,k\in D)$ is $L^1$-Skorohod integrable with Skorohod integral $S$ in the sense of Definition~\ref{def-Skorohod}. We  denote the $L^1$-Skorohod integral $S$ for the random  process $u$ by 
\[S:=\sint u \,\xi.\]
\end{definition}

\begin{remark}
For $u\in\Dom(\bd)\subset L^2(\Xi;\mathcal{H})$, we have 
$$u=\sum_{k=1}^\infty \langle u,e_k\rangle_{\mathcal{H}}e_k=:\sum_{k=1}^\infty u_k e_k. $$
For any $F=f(\xi_1,\cdots,\xi_n)$ of the form \eqref{smooth-rv} with $\xi_k:=\xi(e_k)$,  we have $$DF=\sum_{k=1}^n \frac{\partial f}{\partial x_k}(\xi_1,\dots, \xi_n) e_k.$$
By \eqref{e:L2-dual}, we have, by denoting $U=(u_1,u_2,\cdots)$,
\begin{align*}
\bE[\bd(u)F]&=\bE\left[\langle u, DF\rangle_{\mathcal H}\right]=\E\left[U\cdot \nabla f(\xi_1, \dots, \xi_n)\right].
\end{align*}
This shows that for $u\in\Dom(\bd)$, the divergence $ \bd(u)=\int_0^T u(s) \xi(s) \dd s$, defined in Definition~\ref{def: Skorohod-l2}, coincides with the $L^1$-Skorohod integral $\ssint u\, \xi=\ssint U\xi$ defined in Definition~\ref{def:Skorohod-integral}.    
In particular, for a deterministic process $u\in \mathcal H$, we have 
\begin{equation}\label{e:delta-u-determin}
\bd(u)=\sint u\xi =\sum_{k=1}^\infty u_k \xi_k. 
\end{equation}
\end{remark}

\begin{definition}[Projection]\label{def:projection}  Let $\mathcal{P}_n U:=\left(U_1,\cdots,U_n,0,0,\cdots\right)$, where $\mathcal{P}_n$ is the projection operator that sets all coordinates in $U$ beyond $n$ to $0$. Then the $n$-dimensional projection of the $L^1$-Skorohod intrgral $\ssint U\xi$ is defined by
$$\sintn U \xi: =\sint \left(\mathcal{P}_n U\right)\xi.$$
\end{definition}





The results below are borrowed from Proposition 13 and  Corollary 14 in
\cite{qrv}, respectively.

\begin{proposition}\label{prop-conditional expectation}

If $U$ is $L^1$-Skorohod integrable, then $\bE\left[\mathcal{P}_n U\big|\mathcal{F}_n\right]$ is also $L^1$-Skorohod integrable, and 
$$\bE\left[\sint U\xi \Big|\mathcal{F}_n\right]=\sint \bE\left[\mathcal{P}_n U\big|\mathcal{F}_n\right]\xi =\sintn\bE\left[U|\mathcal{F}_n\right]\xi.$$
\end{proposition}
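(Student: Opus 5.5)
We need to prove: if $U$ is $L^1$-Skorohod integrable with integral $S$, then $V:=\bE[\mathcal P_n U|\mathcal F_n]$ is $L^1$-Skorohod integrable with integral $\bE[S|\mathcal F_n]$. The second equality, $\sint \bE[\mathcal P_nU|\mathcal F_n]\xi=\sintn \bE[U|\mathcal F_n]\xi$, is then immediate from Definition~\ref{def:projection}, because $\mathcal P_n$ acts coordinatewise and therefore commutes with conditional expectation: $\mathcal P_n\bE[U|\mathcal F_n]=\bE[\mathcal P_nU|\mathcal F_n]$.

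We first check that $V$ is admissible in Definition~\ref{def-Skorohod}. Its coordinates are $V_k=\bE[u_k|\mathcal F_n]$ for $k\le n$ and $V_k=0$ for $k>n$. Each $V_k$ lies in $L^1$ because $u_k\in L^1$, and each is measurable with respect to $\sigma(\xi_k:k\in D)$ since $\mathcal F_n$ is contained in it. The candidate integral $\bE[S|\mathcal F_n]$ also lies in $L^1$.

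Next we verify the duality relation. Fix $m\in D$ and a bounded differentiable $f:\R^m\to\R$ with bounded gradient, and set $F=f(\xi_1,\dots,\xi_m)$. We split into two cases.

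\emph{Case $m\le n$.} Here $F$ is $\mathcal F_n$-measurable. By the tower property and the duality for $U$,
\[
\bE\big[\bE[S|\mathcal F_n]F\big]=\bE[SF]=\sum_{k\le m}\bE[u_k\partial_kf]=\sum_{k\le m}\bE\big[\bE[u_k|\mathcal F_n]\partial_kf\big]=\bE[V\cdot\nabla f].
\]
The last equality uses that $\partial_k f(\xi_1,\dots,\xi_m)$ is $\mathcal F_n$-measurable and bounded, and that $V_k=\bE[u_k|\mathcal F_n]$ for $k\le m\le n$.

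\emph{Case $m>n$.} Define $g(x_1,\dots,x_n):=\bE f(x_1,\dots,x_n,\xi_{n+1},\dots,\xi_m)$. By independence of the $\xi_k$, we have $\bE[F|\mathcal F_n]=g(\xi_1,\dots,\xi_n)$. The function $g$ is bounded, and by dominated convergence it is differentiable with $\partial_kg=\bE[\partial_kf(x,\xi_{n+1},\dots,\xi_m)]$, which is bounded. Therefore
\[
\bE\big[\bE[S|\mathcal F_n]F\big]=\bE\big[S\,g(\xi_1,\dots,\xi_n)\big]=\sum_{k\le n}\bE[u_k\partial_kg].
\]
Here the second equality is the duality for $U$ applied with test dimension $n\in D$. (If $D$ is finite we may take $n\le\max D$; otherwise the statement is vacuous.)

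It remains to identify the right-hand side with $\bE[V\cdot\nabla f]=\sum_{k\le n}\bE\big[\bE[u_k|\mathcal F_n]\partial_kf\big]$. Conditioning $\partial_kf(\xi_1,\dots,\xi_m)$ on $\mathcal F_n$ gives $\bE[\partial_kf|\mathcal F_n]=\partial_kg(\xi_1,\dots,\xi_n)$, again by independence. Hence
\[
\bE\big[\bE[u_k|\mathcal F_n]\partial_kf\big]=\bE\big[\bE[u_k|\mathcal F_n]\partial_kg\big]=\bE[u_k\partial_kg].
\]
The terms with $n<k\le m$ vanish because $V_k=0$ there, so the two sides agree.

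The only delicate step is the differentiation under the expectation defining $g$. This is routine because $\nabla f$ is bounded, so dominated convergence applies. All products involved are integrable, since the $u_k$ and $S$ are in $L^1$ and the test functions and their gradients are bounded. This completes the plan.
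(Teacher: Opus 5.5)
Your proof is correct and complete in substance. The paper contains no proof of this statement to compare with: it is imported from \cite[Proposition~13]{qrv} without argument, so your write-up is a self-contained verification of a cited result.

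You use the two natural ingredients:
\begin{itemize}
\item For test functions of $\xi_1,\dots,\xi_m$ with $m\le n$, the tower property suffices.
\item For $m>n$, you integrate out the independent coordinates $\xi_{n+1},\dots,\xi_m$ to obtain $g$. This reduces both sides to the dimension-$n$ duality for $U$. The key facts are $\bE[\partial_k f\,|\,\mathcal{F}_n]=\partial_k g(\xi_1,\dots,\xi_n)$ and $V_k=0$ for $k>n$.
\end{itemize}
The second displayed equality is indeed just $\mathcal{P}_n\bE[U|\mathcal{F}_n]=\bE[\mathcal{P}_nU|\mathcal{F}_n]$ together with Definition~\ref{def:projection}.

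Three minor points of presentation.
\begin{itemize}
\item Your parenthetical about $D$ is unnecessary. In the case $m>n$ with $m\in D$, the fact that $D$ is an initial segment already forces $n\in D$.
\item If $D$ is finite and $n\ge\max D$, only your first case occurs, and $V=\mathcal{P}_nU=U$ almost surely. So your remark that $\mathcal{F}_n\subset\sigma(\xi_k:k\in D)$, which fails when $n>\max D$, causes no harm.
\item The definition requires $g$ to be differentiable, not merely to have bounded partial derivatives. This follows from dominated convergence applied to the full first-order remainder $|h|^{-1}\bigl(f(x+h,\eta)-f(x,\eta)-\nabla_x f(x,\eta)\cdot h\bigr)$. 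That remainder is bounded by $2\|\nabla f\|_\infty$ by the mean value theorem, and it tends to $0$ pointwise by differentiability of $f$. This is presumably what you meant by ``differentiable by dominated convergence'', but it is worth stating, since existence of bounded partials alone would not suffice.
\end{itemize}
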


\begin{proposition}\label{uniformly-Skorohod}

Let $U=(u_1, u_2,\dots)$ be an $\bR^{\bN}$-valued random vector such that $u_k\in L^1$ for all $k\in\bN$. Denote $H_n:=\bE[\mathcal{P}_n U|\mathcal{F}_n]$. Then the following two conditions are equivalent:
\begin{enumerate}
\item For every $n$, $S_n:=\ssint H_n\xi$ exists and  $\{S_n\}_{n\in\mathbb N}$ is  uniformly integrable.

\item $U$ is $L^1$-Skorohod integrable.

\end{enumerate}
If any of these conditions holds, then 
$S_n$ is a martingale that converges almost surely and in $L^1$ to $\ssint U\dd \xi$.
\end{proposition}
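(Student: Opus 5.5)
The plan is to prove the two implications separately, using the duality relation in Definition~\ref{def-Skorohod} together with the martingale convergence theorem. Two preliminary remarks will be used throughout.

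First, bounded smooth functions $f(\xi_1,\dots,\xi_n)$ with bounded gradient are measure-determining on $\mathcal F_n$. Hence, if an $L^1$ variable $S$ satisfies the duality relation for all such $f$, then $\bE[S\,|\,\mathcal F_n]$ is uniquely determined by the duality.

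Second, there is no loss of generality in replacing $S$ by $\bE[S\,|\,\mathcal F_n]$ (for a projection onto the first $n$ coordinates), or by $\bE[S\,|\,\mathcal F_\infty]$ (for $D=\bN$). Indeed, all test variables are measurable with respect to these $\sigma$-fields, so the duality is unchanged. With this normalization, $S_n$ is $\mathcal F_n$-measurable and $\ssint U\xi$ is $\mathcal F_\infty$-measurable.

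\emph{(2) $\Rightarrow$ (1).} Assume $S=\ssint U\xi$ exists. By Proposition~\ref{prop-conditional expectation}, $H_n=\bE[\mathcal P_nU\,|\,\mathcal F_n]$ is $L^1$-Skorohod integrable, with
\[
S_n=\bE[S\,|\,\mathcal F_n].
\]
A family of conditional expectations of a single $L^1$ variable is uniformly integrable, which gives (1). Moreover, $(S_n)$ is a Doob martingale. By Lévy's upward theorem, it converges a.s. and in $L^1$ to $\bE[S\,|\,\mathcal F_\infty]=S$.

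\emph{(1) $\Rightarrow$ (2).} The first step is to show that $(S_n)$ is an $(\mathcal F_n)$-martingale. Take $f$ depending only on $\xi_1,\dots,\xi_n$. Since $\partial_k f=0$ for $k>n$, duality for $S_{n+1}$ gives
\[
\bE[S_{n+1}f]=\sum_{k\le n}\bE\big[\bE[u_k\,|\,\mathcal F_{n+1}]\,\partial_kf\big]=\sum_{k\le n}\bE[u_k\,\partial_kf].
\]
The same computation for $S_n$ gives
\[
\sum_{k\le n}\bE\big[\bE[u_k\,|\,\mathcal F_n]\,\partial_kf\big]=\sum_{k\le n}\bE[u_k\,\partial_kf]=\bE[S_nf].
\]
Since $S_n$ is $\mathcal F_n$-measurable and the test class is measure-determining, this yields $\bE[S_{n+1}\,|\,\mathcal F_n]=S_n$.

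Uniform integrability then gives a limit $S\in L^1$, with $S_n\to S$ a.s. and in $L^1$, and $S_n=\bE[S\,|\,\mathcal F_n]$. Finally, fix any $m$ and a test function $f$ of $\xi_1,\dots,\xi_m$. For $n\ge m$,
\[
\bE[Sf]=\bE[S_nf]=\sum_{k\le m}\bE\big[\bE[u_k\,|\,\mathcal F_n]\,\partial_kf\big]=\bE[U\cdot\nabla f].
\]
So $S$ is the $L^1$-Skorohod integral of $U$, and the convergence statement holds.

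The main obstacle is purely measure-theoretic rather than analytic. One must verify that the duality relation with the restricted test class (bounded $f$ with bounded gradient) pins down conditional expectations. This is what justifies both the martingale identity and the $\mathcal F_n$-measurability normalization of $S_n$. I would handle it by approximating indicators of boxes in $\bR^n$ by such smooth functions and applying a monotone class argument.
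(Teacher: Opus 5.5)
Your argument is correct. The paper does not prove this statement itself; it imports it from \cite[Corollary~14]{qrv}. Your route is the natural one: Proposition~\ref{prop-conditional expectation} plus L\'evy's upward theorem for (2)$\Rightarrow$(1), and for (1)$\Rightarrow$(2) the martingale property from duality, closure of the uniformly integrable martingale, and passage of the duality relation to the limit.

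Two small remarks. Your hand computation of $\bE[S_{n+1}\,|\,\mathcal F_n]=S_n$ is exactly Proposition~\ref{prop-conditional expectation} applied to $H_{n+1}$, since $\bE[\mathcal P_nH_{n+1}\,|\,\mathcal F_n]=H_n$. You should also state that your normalization $S_n\mapsto\bE[S_n\,|\,\mathcal F_n]$ preserves the uniform integrability assumed in (1); this follows from de la Vall\'ee-Poussin together with conditional Jensen.
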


\section{On \texorpdfstring{$L^p$}{}-solutions with \texorpdfstring{$p>1$}{}}\label{Sec: Lp}

In this section, we apply the Malliavin calculus \cite{Nua06} and follow the approach in \cite{hu02,S17} to determine whether equation \eqref{e:she} admits an $L^p$-solution with $p>1$. We show that,  for any $p>1$, there is no $L^p$-solution when $\rho\in(0,2)$  if the solution exists uniquely (Propositions~\ref{prop:no-L^2} and \ref{prop:p-2-ineq} and Remark \ref{rem:p-2}), whereas for $\rho=2$ the equation admits a local $L^p$-solution but no global $L^p$-solution (Propositions~\ref{prop:local-L^2}, \ref{prop:p-2-ineq} and Remark \ref{rem:p-2}).

As mentioned in Section \ref{sec:intro}, we assume $\beta=1$ in \eqref{e:she}. Recall the Wiener chaos expansion of $u$ defined in \eqref{e:sol-chaos} 
\begin{equation*}
u(t,x)=1+\sum_{n=1}^{\infty}\mathbf{I}_n(f_n(t,x;s_1,\cdots,s_n))
\end{equation*}
and its second moment 
\begin{equation}\label{e:2nd-moment}
\E[u(t,x)^2]=1+\sum_{n=1}^\infty n!\Vert f_n(t,x)\Vert_{\mathcal{H}^{\otimes n}}^2,
\end{equation}
where $f_n(t,x)$ is given in \eqref{e:fn}. 
\begin{proposition}\label{prop:no-L^2}
When $\rho\in(0, 2)$, there is no $L^2$-solution for equation \eqref{e:she}.
\end{proposition}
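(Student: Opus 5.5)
The plan is to show that the second moment \eqref{e:2nd-moment} diverges already at the first chaos. Equivalently, $\|f_1(t,x)\|_{\mathcal H}^2=\infty$ for every $t>0$ (and every $x$, in particular $x=0$) when $\rho\in(0,2)$. If some chaos term is infinite, then no $L^2$-solution can exist. Indeed, any $L^2$-solution of \eqref{e:int-eq} has a chaos expansion whose kernels are forced, by the duality relation \eqref{e:L2-dual} and iteration, to coincide with $f_n$ in \eqref{e:fn}. Here $f_1(t,x;s)=g_\rho(t-s,x)\mathbf 1_{[0,t]}(s)$. Should the first chaos turn out to be finite for $x\neq0$, I will instead work at $x=0$, where $u(t,0)$ itself appears in the equation.

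First I recall the behaviour of the stable density at the origin. By scaling, $g_\rho(r,0)=c_\rho r^{-1/\rho}$ with $c_\rho=g_\rho(1,0)>0$. Then
\[
\|f_1(t,0)\|_{\mathcal H}^2=c_\rho^2\int_0^t\int_0^t (t-s)^{-1/\rho}(t-r)^{-1/\rho}|s-r|^{2H-2}\,\dd s\,\dd r .
\]
After the substitution $a=t-s$, $b=t-r$, this becomes a homogeneous integral near $(0,0)$ of total degree $-2/\rho+2H-2$. In polar-type scaling, integrability over $[0,t]^2$ at the origin requires $-2/\rho+2H>0$, i.e. $\rho>1/H$. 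There is also the separate constraint that each singular factor $a^{-1/\rho}$ be locally integrable, which needs $\rho>1$. So the first chaos is finite only when $\rho>1/H$, and is infinite for $\rho\le 1/H$. This alone does not cover all of $\rho\in(1/H,2)$, so the first chaos is not enough in general.

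The main step is therefore to show that $n!\|f_n(t,0)\|^2_{\mathcal H^{\otimes n}}$ is not summable for every $\rho<2$, and indeed infinite for some $n$. I would look for a divergence produced by clustering of times. The product $\prod_i (s_{i+1}-s_i)^{-1/\rho}$, paired with a second copy through $\prod|s_i-r_{\sigma(i)}|^{2H-2}$, yields after integrating the increments a homogeneous kernel. Its degree per step is $-2/\rho+2H$ (two integrations, a factor $2H-2$, and two factors $-1/\rho$). For $\rho<2$ I expect this effective degree to become critically negative only once $n$ is large. Indeed, the paper's discussion (the $L^2$-regime requiring $\alpha\ge\frac12$, i.e. $\rho\ge2$) suggests a lower bound of the form
\[
n!\|f_n(t,0)\|^2\ \ge\ C^n\,\Gamma\big(n(2H-2/\rho)\big)^{-1}\cdots,
\]
or more simply an infinite integral for large $n$, coming from the off-diagonal pairing $\sigma\ne\mathrm{id}$. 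The concrete attempt will be this. Restrict to the identity permutation in both copies and use positivity of all integrands, which holds since $H>\frac12$. Then lower bound $|s_i-r_i|^{2H-2}\ge (2t)^{2H-2}$ and reduce to $\big(\int_{[0,t]^n_<}\prod (s_{i+1}-s_i)^{-1/\rho}\big)^2$. This is finite, so the divergence must instead come from interleaving the $s$ and $r$ times, where a single $s$-gap straddles $r$-points.

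I expect this interleaving lower bound to be the main obstacle. What I would try first is to compare with the renewal structure of $\tau\cap\tau'$ from \cite{lswz}. There, $\alpha=1-1/\rho<\frac12$ makes the intersection renewal transient in a way that forces the second moment series to diverge for all $\beta>0$. I would transport that computation to the continuum by a Laplace transform in $t$: the series $\sum_n n!\|f_n\|^2$ is controlled by a geometric series in a quantity that is infinite when $\rho<2$.
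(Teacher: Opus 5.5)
Your proposal has a genuine gap. It never produces a divergent lower bound, and the one step that works is the one you discard. Because $f_n\ge 0$, $|s_i-s_i'|^{2H-2}\ge t^{2H-2}$ on $[0,t]^{2n}$, and symmetrization preserves the integral,
\[
n!\,\|f_n(t,x)\|^2_{\mathcal H^{\otimes n}}\ \ge\ n!\,t^{(2H-2)n}\Big(\int_{[0,t]^n_<} g_\rho(t-s_n,x)\prod_{i=1}^{n-1}g_\rho(s_{i+1}-s_i,0)\,\dd\boldsymbol s\Big)^2 .
\]
Each such term is indeed finite for $\rho>1$, but that does not matter. What matters is the growth in $n$:
\begin{itemize}
\item Use $g_\rho(r,0)=c\,r^{-1/\rho}$, the lower bound of Lemma~\ref{lem:g-bound} for $g_\rho(t-s_n,x)$, and Lemma~\ref{lem:a direct calculate}.
\item Then the squared simplex integral is of order $\Gamma\big(n(1-\tfrac1\rho)\big)^{-2}$, up to factors $C^n$ and powers of $n$.
\item By Stirling, the $n$-th term is therefore at least $C^n (n!)^{\frac2\rho-1}$, up to polynomial factors. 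This is not summable for $\rho<2$.
\item For $\rho\le1$ the simplex integral is already infinite when $n\ge2$.
\end{itemize}
This is exactly the paper's proof. Probabilistically, Lemma~\ref{lem:L-property} gives $\fE_{(t,x)}[L_t^n]=n!\int_{[0,t]^n_<}\cdots$. So these lower bounds sum to $\fE^{\otimes2}_{(t,x)}\big[\exp\{t^{2H-2}L_tL_t'\}\big]$, which diverges because $\fE_{(t,x)}[L_t^n]$ grows like $C^n(n!)^{1/\rho}$.

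Two points in your write-up hide this. First, you must keep all $(n!)^2$ permutation pairs of the symmetrized kernel. Taking only the identity pair, as you describe, loses a factor $(n!)^2$ and leaves a summable bound of order $C^n(n!)^{\frac2\rho-3}$. Second, the divergence comes from the $n!$ in the second-moment formula, not from any single integral.

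The route you commit to instead cannot succeed for $\rho\in(\frac1H,2)$. In that range, $k!\,f_k=\rho_{k,L}\in\mathcal H^{\otimes k}$ for every $k$ (Lemma~\ref{lem:rhoKL-inH}). So every chaos term is finite, and no interleaving of $s$- and $r$-times can make some $n!\|f_n\|^2$ infinite. Your expectation that the effective degree turns "critically negative once $n$ is large" is also mistaken. The scaling degree is $n(2H-\frac2\rho)$, positive for every $n$.

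The Laplace-transform/renewal comparison does not substitute either. The kernel $|r-s|^{2H-2}$ couples non-coincident times of $L$ and $L'$, so there is no renewal structure that yields a geometric series. Moreover, your heuristic is backwards: in the i.i.d.\ setting, transience of $\tau\cap\tau'$ (which holds when $\alpha<\frac12$) gives a bounded second moment for small $\beta$, not a divergent one.
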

\begin{proof} In view of \eqref{e:2nd-moment}, it suffices to show $\sum_{n=1}^\infty n!\Vert f_n(t,x)\Vert_{\mathcal{H}^{\otimes n}}^2=\infty$ when $\rho\in(0,2)$.  Note that
\begin{equation}\label{eq:lower bound}
\begin{aligned}
n!\Vert f_n(t,x)\Vert_{\mathcal{H}^{\otimes n}}^2&=n!\int_{[0,t]^n}\int_{[0,t]^n}f_n(t,x;s_1,\cdots,s_n)f_n(t,x;s_1',\cdots,s_n')\prod_{i=1}^n|s_i-s_i'|^{2H-2}\dd \boldsymbol{s}\dd \boldsymbol{s}'\\
&\geq t^{(2H-2)n}n!\left(\int_{\boldsymbol{s}\in [0,t]^n_<}g_\rho(t-s_n,x)\prod_{i=1}^{n-1}g_\rho(s_{i+1}-s_i,0)\dd \boldsymbol{s}\right)^2.
\end{aligned}   
\end{equation}

By  Lemma~\ref{lem:g-bound}, we have that
\begin{equation}\label{I_n-moment-ge}
\begin{aligned}
n!\Vert f_n(t,x)\Vert_{\mathcal{H}^{\otimes n}}^2&\geq t^{(2H-2)n}n!C^{2n} \left(\int_{s\in [0,t]^n_<}(s_2-s_1)^{-\frac{1}{\rho }}\cdots(s_n-s_{n-1})^{-\frac{1}{\rho }}(t-s_n)\dd \boldsymbol{s}\right)^2
\\&=C^{2n} n!\frac{\Gamma(1-\frac{1}{\rho})^{2(n-1)}\Gamma(2)^2}{\Gamma(n(1-\frac{1}{\rho})+\frac{1}{\rho}+2)^2}t^{2n(H-\frac{1}{\rho})+\frac{2}{\rho}+2}
\\&\geq C^{2n}~n^{-4}n^{n(\frac{2}{\rho}-1)}t^{2n(H-\frac{1}{\rho})+\frac{2}{\rho}+2},
\end{aligned}   
\end{equation}
where the equality follows from Lemma \ref{lem:a direct calculate} and the last inequality follows from Stirling's approximation. Thus, for $\rho\in(0,2)$,
$\sum_{n=1}^{\infty}n!\|f_n\|^2_{\mathcal H^{\otimes n}}=\infty$, and this proves the desired result.
\end{proof}


\begin{proposition}\label{prop:local-L^2}

When $\rho=2$ (equivalently, $\alpha=\tfrac12$), equation \eqref{e:she} admits a local $L^2$-solution, that is, there exists a constant $t_c\in(0,\infty)$ such that an $L^2$-solution exists on the time interval $[0,t_c)$, while $\sum_{n=1}^{\infty}n!\|f_n(t,x)\|^2_{\mathcal H^{\otimes n}}=\infty$ for all $t>t_c$.
\end{proposition}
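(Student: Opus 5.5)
For $\rho=2$ we have $g_2(s,0)=(4\pi s)^{-1/2}$ (or the analogous constant for the paper's normalization), so every factor $g(s_{i+1}-s_i,0)$ in \eqref{e:fn} is an explicit power. The plan is to show that $a_n(t):=n!\|f_n(t,x)\|^2_{\mathcal H^{\otimes n}}$ behaves like $t^{(2H-1)n}$ times a geometric factor $c^n$, up to subexponential corrections. Then $\sum_n a_n(t)$ converges for small $t$ and diverges for large $t$, and the threshold
\[t_c=\sup\Big\{t:\sum_n a_n(t)<\infty\Big\}\]
lies in $(0,\infty)$.

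\textbf{Step 1: scaling.} Substituting $s_i=t r_i$ gives
\[a_n(t)=\lambda(t)\, t^{(2H-1)n}\, b_n(x/\sqrt t),\]
with $\lambda(t)$ a harmless power of $t$. Here $b_n$ is the same quantity at $t=1$. The exponent comes from: $n$ factors $|s_i-s_i'|^{2H-2}$ giving $t^{(2H-2)n}$; $2n$ Lebesgue measures giving $t^{2n}$; and $2(n-1)$ heat kernels at $0$ giving $t^{-(n-1)}$. This yields $t^{(2H-1)n}$ up to an $n$-independent power.

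Since $g(t-s_n,x)\le g(t-s_n,0)$, it is enough for the upper bound to treat $x=0$. For the lower bound, \eqref{I_n-moment-ge} with $\rho=2$ already gives $a_n(t)\ge C^{2n}n^{-4}t^{(2H-1)n+3}$. Hence $\sum_n a_n(t)=\infty$ once $t^{2H-1}C^2>1$, so the series diverges for large $t$.

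\textbf{Step 2: upper bound.} We need $b_n\le K^n$. Write $n!\|f_n\|^2$ as
\[\frac{1}{n!}\sum_{\sigma,\sigma'}\int\!\!\int \prod_i |s_i-s'_i|^{2H-2}\,\Phi_\sigma(s)\,\Phi_{\sigma'}(s')\,ds\,ds',\]
and apply Cauchy--Schwarz/Hölder to the kernel $\prod_i|s_i-s_i'|^{2H-2}$. Following \cite{hu02,S17}, use the standard bound
\[\|f\|^2_{\mathcal H^{\otimes n}}\le C_H^n\|f\|^2_{L^{1/H}([0,t]^n)},\]
which is Hardy--Littlewood--Sobolev for $H>\tfrac12$. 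For $\rho=2$ this reduces matters to computing
\[n!\,\Big\|\mathrm{Sym}\prod_i (s_{i+1}-s_i)^{-1/2}\Big\|^2_{L^{1/H}}.\]
By symmetry the $L^{1/H}$ norm of the symmetrization is at most $(n!)^{-(1-H)}$ times the norm of the ordered simplex function. A Dirichlet/Beta integral (as in Lemma~\ref{lem:a direct calculate}) evaluates $\int_{[0,t]^n_<}\prod (s_{i+1}-s_i)^{-1/(2H)}\,d\boldsymbol s$, which is finite because $1/(2H)<1$, and gives
\[\frac{\Gamma(1-\tfrac{1}{2H})^{n}}{\Gamma\big(n(1-\tfrac1{2H})+1\big)}.\]
Raising this to the power $2H$ and multiplying by $n!\,(n!)^{-2(1-H)}$ produces factorial exponent
\[1-2(1-H)-2H\big(1-\tfrac1{2H}\big)=0.\]
So $b_n\le K^n$, giving convergence for $t^{2H-1}K<1$. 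This exact cancellation of the factorial powers is the criticality of $\rho=2$, and it is also why $\rho<2$ fails in Proposition~\ref{prop:no-L^2}.

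\textbf{Step 3: defining $t_c$.} The sum $\sum_n b_n(0)\,\tau^n$ is a power series in $\tau=t^{2H-1}$ with nonnegative coefficients. Its radius of convergence $R$ satisfies $1/K\le R\le 1/C^2$. Set $t_c=R^{1/(2H-1)}$: the series converges for $t<t_c$ and diverges for $t>t_c$, uniformly in $x$ by the bound $g(\cdot,x)\le g(\cdot,0)$ together with the lower bound. For $x\neq 0$ the divergence threshold is the same, since the terminal factor $g(t-s_n,x)$ affects only a subexponential amount. I would argue this by restricting to $s_n<t/2$, where $g(t-s_n,x)\ge c\,g(t-s_n,0)$, and checking that this costs only polynomial factors.

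\textbf{Step 4: solution.} For $t<t_c$ the chaos series \eqref{e:sol-chaos} converges in $L^2$. The standard argument of \cite{hu02} shows that it satisfies \eqref{e:int-eq} with an $L^2$-Skorohod integral: the $(n+1)$st chaos of the integral term is the $n$th kernel convolved with $g$. Uniqueness follows because any $L^2$-solution must have these chaos kernels, obtained by iterating \eqref{e:int-eq}.

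The main obstacle is Step 2: getting a genuinely exponential upper bound with no leftover factorial. This needs the HLS reduction to $L^{1/H}$ together with careful bookkeeping of the symmetrization and the $\Gamma$-function asymptotics.
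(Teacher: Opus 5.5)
Your argument is essentially the paper's. Divergence comes from the lower bound \eqref{I_n-moment-ge} specialized to $\rho=2$. Convergence comes from Lemma~\ref{lem:1/H norm}, the factor $(n!)^{2H-1}$ produced by symmetrization, the Dirichlet integral with exponents $\frac{1}{2H}$, and Stirling, with the factorial powers cancelling exactly. Your scaling identity $n!\|f_n(t,x)\|^2_{\mathcal H^{\otimes n}}=t^{(2H-1)n}b_n(x/\sqrt t)$ actually gives a sounder basis for the threshold than the paper's remark that $\|f_n(t,x)\|_{\mathcal H^{\otimes n}}$ increases in $t$ ``since $f_n$ is nonnegative''. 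At $x=0$ the terminal factor $g(t-s_n,0)$ in fact decreases in $t$. Monotonicity holds instead because $t^{(2H-1)n}$ increases in $t$ and $b_n(y)$ decreases in $|y|$.

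One auxiliary claim in your Step 3 is false: restricting to $\{\max_i s_i<t/2\}$ does not cost only polynomial factors. On that set $g(t-s_n,x)\le g(t-s_n,0)\le g(t/2-s_n,0)$. Hence $n!\|f_n(t,x)\mathbf 1_{[0,t/2]^n}\|^2_{\mathcal H^{\otimes n}}\le n!\|f_n(t/2,0)\|^2_{\mathcal H^{\otimes n}}=(t/2)^{(2H-1)n}b_n(0)$, and the sum of these converges for every $t<2t_c$. So this restriction cannot detect divergence anywhere in $(t_c,2t_c)$.

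To show that the threshold at $x\neq0$ equals the one at $x=0$, you would need three steps. First restrict instead to $\{\max_i s_i<t-\delta\}$, where $g(t-s_n,x)\ge e^{-x^2/(4\delta)}g(t-s_n,0)\ge e^{-x^2/(4\delta)}g(t,0)$. Then let $\delta\downarrow0$. Finally, prove separately that replacing the singular terminal factor by a bounded one does not change the exponential growth rate of $b_n$, which is a genuine additional estimate.

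The paper does not attempt any of this. It fixes $x$ and sets $t_c:=\sup\{t>0:\mathbb E[u(t,x)^2]<\infty\}$. To match what is actually proved, you can simply drop the $x$-independence claim. Your convergence for all $x$ when $t<t_c(0)$, via $g(\cdot,x)\le g(\cdot,0)$, is fine.
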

\begin{proof} 
We first prove the non-existence of global $L^2$-solutions when $\rho=2$. By the same argument as in the proof of Proposition~\ref{prop:no-L^2}, we can get the same estimate  \eqref{I_n-moment-ge}. Thus, noting $H>\frac12$, we have that for sufficiently large $t$,
\begin{equation}\label{2-moment-diverge}
\sum_{n=1}^{\infty}n!\|f_n(t,x)\|^2_{\mathcal H^{\otimes n}}\geq\sum_{n=1}^\infty C^{2n} n^{-4}t^{2n(H-\frac{1}{2})+3}=\infty.
\end{equation}

It remains to show that a $L^2$-solution exists locally. By sequentially applying Lemmas \ref{lem:1/H norm}, \ref{lem:a direct calculate} and \ref{lem:g-bound},
we obtain that
\begin{equation}\label{e:2-moment-leq}
\begin{aligned}
&\quad n!\Vert f_n(t,x)\Vert_{\mathcal{H}^{\otimes n}}^2\leq C^{2n}\Vert f_n\Vert^2_{L^{1/H}(\bR^n)} 
\\&\leq C^{2n} (n!)^{2H-1}\left(\int_{s\in [0,t]^n_<}(s_2-s_1)^{-\frac{1}{2H }}\cdots(s_n-s_{n-1})^{-\frac{1}{2H}}(t-s_n)^{-\frac{1}{2H}}\dd s\right)^{2H}
\\&=C^{2n} (n!)^{2H-1}\frac{\Gamma(1-\frac{1}{2 H})^{2nH}}{\Gamma(n(1-\frac{1}{2 H})+1)^{2H}}t^{2nH(1-\frac{1}{2H})}.
\end{aligned}   
\end{equation}

Note that by Stirling's approximation, for large enough $n$, we have that
$$\frac{(n!)^{2H-1}}{\Gamma(n(1-\frac{1}{2 H})+1)^{2H}}\leq C^n{\sqrt{n}},$$
which implies
\begin{equation}
n!\Vert f_n(t,x)\Vert_{\mathcal{H}^{\otimes n}}^2\leq C^{n}{\sqrt{n}}t^{n(2H-1)}. 
\end{equation}
Therefore, for sufficiently small $t>0$,
\begin{align}\label{2-moment-converge}
\bE[u(t,x)^2]=1+\sum_{n=1}^{\infty}n!\Vert f_n(t,x)\Vert_{\mathcal{H}^{\otimes n}}^2<\infty.
\end{align}

Finally, note that $\|f_n(t,x)\|^2_{\cH^{\otimes n}}$ is increasing in $t$  since  $f_n$ is nonnegative, and hence $\bE[u(t,x)^2]$ is increasing in $t$. Set $t_c:=\sup\{t>0: \bE[u(t,x)^2]<\infty\}$ and the proof is completed.
\end{proof}

\begin{remark}
From the arguments above, one readily sees that, for general $\beta>0$ in \eqref{e:she}, the only effect is to introduce an additional factor $\beta^{2n}$ in the second moment of the $n$-th order chaos. This factor has no effect in the regime $\rho\in(0,2)$, while in the critical case $\rho=2$ it merely induces a quantitative shift of the critical point $t_c$.
\end{remark}

To show that an $L^p$-solution does not exist for any $\rho\in(0,2)$ and $p>1$ when the solution is unique, we show that the existence of an $L^p$-solution implies the existence of an $L^2$-solution for a possibly different $\beta$ in \eqref{e:she}. Let $u_\lambda$ be the mild solution of \eqref{e:she}  with $\beta=\sqrt\lambda$. The following result is inspired by \cite{hln17} (see also   \cite[Lemma~2]{le16}). 

\begin{proposition}\label{prop:p-2-ineq}
Assume the solution to \eqref{e:she} is unique.  For every $1<p\leq q$, we have
\begin{align*}
\left\Vert u_{\frac{p-1}{q-1}\lambda}(t,x)\right\Vert_{L^q(\bP)}\leq\left\Vert u_{\lambda}(t,x)\right\Vert_{L^p(\bP)}.
\end{align*}
\end{proposition}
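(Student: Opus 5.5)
The plan is to use Nelson's hypercontractivity for the Ornstein--Uhlenbeck semigroup $P_\tau$ on $(\Xi,\mathcal F,\mathbb P)$, together with the way $P_\tau$ acts on the solution map. Recall that $P_\tau$ multiplies the $n$-th chaos by $e^{-n\tau}$. Equivalently, by Mehler's formula, $P_\tau F(\xi)=\tilde{\mathbb E}[F(e^{-\tau}\xi+\sqrt{1-e^{-2\tau}}\tilde\xi)]$ with $\tilde\xi$ an independent copy of the noise. Nelson's theorem states that $\|P_\tau F\|_{L^q}\le\|F\|_{L^p}$ whenever $e^{2\tau}\ge \frac{q-1}{p-1}$. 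We choose $\tau$ so that $e^{-2\tau}=\frac{p-1}{q-1}$. It then suffices to prove the identity
\[
P_\tau u_\lambda(t,x)=u_{e^{-2\tau}\lambda}(t,x),
\]
where both sides are understood as the unique mild Skorohod solutions. This is where the uniqueness hypothesis enters.

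\textbf{Step 1 (a candidate via Mehler).} Set $v:=P_\tau u_\lambda(t,x)$. By Mehler's formula and Jensen's inequality, $v\in L^p$, and in particular $v\in L^1$. Adaptedness is preserved, since $P_\tau$ commutes with conditional expectation onto $\mathcal F_t$ for the (Gaussian) filtration of the noise.

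\textbf{Step 2 (the mild equation for $v$).} We verify that $v$ solves \eqref{e:int-eq} with $\beta=\sqrt\lambda$ replaced by $e^{-\tau}\sqrt\lambda$. Apply $P_\tau$ to \eqref{e:int-eq} for $u_\lambda$. The deterministic term is unchanged. For the Skorohod term we use the commutation relation $P_\tau\,\bd(w)=e^{-\tau}\bd(P_\tau w)$. In the $L^1$ framework this is checked by duality in Definition~\ref{def-Skorohod}: for a test function $F=f(\xi_1,\dots,\xi_n)$,
\[
\E[(P_\tau S)F]=\E[S\,P_\tau F],
\]
and $\nabla(P_\tau f)=e^{-\tau}P_\tau\nabla f$. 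Hence
\[
\E[(P_\tau S)F]=e^{-\tau}\E[U\cdot P_\tau\nabla f]=e^{-\tau}\E[(P_\tau U)\cdot\nabla f].
\]
This shows that $P_\tau U$ is $L^1$-Skorohod integrable with integral $e^{\tau}P_\tau S$. Since $P_\tau$ commutes with multiplication by the deterministic kernel $\1_{[0,t]}(s)g_\rho(t-s,x)$ and acts on $u_\lambda(s,0)$ pointwise in $s$, we obtain
\[
v(t,x)=1+e^{-\tau}\sqrt\lambda\,\sint \1_{[0,t]}(\cdot)\,g_\rho(t-\cdot,x)\,(P_\tau u_\lambda)(\cdot,0)\,\xi .
\]
This is exactly the equation with coupling $\sqrt{e^{-2\tau}\lambda}$. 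By the assumed uniqueness, $v=u_{e^{-2\tau}\lambda}$.

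\textbf{Step 3 (conclusion).} Nelson's hypercontractivity then gives
\[
\|u_{\frac{p-1}{q-1}\lambda}(t,x)\|_{L^q}=\|P_\tau u_\lambda(t,x)\|_{L^q}\le\|u_\lambda(t,x)\|_{L^p}.
\]

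The main obstacle is Step 2 in the genuinely $L^1$ (non-$L^2$) setting. One must justify that $P_\tau$ interchanges with the $L^1$-Skorohod integral for processes that are only in $L^1$. This requires the duality identity $\E[(P_\tau S)F]=\E[S\,P_\tau F]$ for $S\in L^1$ and bounded $F$, together with the fact that $P_\tau f$ remains bounded with bounded gradient, so that it is an admissible test function. Both facts follow from Mehler's formula and Fubini's theorem, since $f$ is bounded. A second point to handle is that $u_\lambda(\cdot,0)$ is only $L^1$ in $\omega$ pointwise in $s$, so interchanging $P_\tau$ with $\langle\cdot,e_k\rangle_{\mathcal H}$ needs integrability of $u_k$; this is guaranteed by Definition~\ref{def:Skorohod-integral}.
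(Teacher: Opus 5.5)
Your proposal is correct and is essentially the paper's argument: show that the Ornstein--Uhlenbeck semigroup maps $u_\lambda$ to a solution with coupling $e^{-\tau}\sqrt{\lambda}$, identify it with $u_{e^{-2\tau}\lambda}$ by the uniqueness hypothesis, and conclude by Nelson's hypercontractivity with $e^{-2\tau}=\frac{p-1}{q-1}$. The only difference is how the commutation $P_\tau\,\bd(w)=e^{-\tau}\bd(P_\tau w)$ is justified. The paper does it formally: it writes $T_\tau u_\lambda=\bE'[u_{\tau,\lambda}]$ for the solution driven by $e^{-\tau}\xi+\sqrt{1-e^{-2\tau}}\xi'$ and lets $\bE'$ remove the $\xi'$-part of the integral. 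Your duality argument (self-adjointness of $P_\tau$ plus $\nabla P_\tau f=e^{-\tau}P_\tau\nabla f$) works directly with the $L^1$-Skorohod definition and is the more careful of the two.
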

\begin{proof} 
Let $\{T_\tau\}_{\tau\geq0}$ denote the Ornstein–Uhlenbeck semigroup in the Gaussian space associated with the noise $\xi$ (see \cite[Section 1.4.1]{Nua06}).  For a bounded measurable function $f$ on $\R^{\cH}$

we have that by Mehler’s formula (see \cite[Equation (1.67)]{Nua06}),
\begin{equation}\label{e:Mehler}
T_\tau f(\xi) = \E'\Big[f\big(e^{-\tau}\xi(s)+\sqrt{1-e^{-2\tau}}\xi'(s)\big)\Big],
\end{equation}
where $\xi'$ is an independent copy of $\xi$ with probability $\bP'$ and expectation $\bE'$ respectively.  Note that $e^{-\tau}\xi(s)+\sqrt{1-e^{-2\tau}}\xi'(s)$ is identically distributed as $\xi$ under the joint probability $\bP\times\bP'$.

For each $\tau\geq0$, let $u_{\tau,\lambda}$ be the mild solution of \eqref{e:she} with $\beta=\sqrt{\lambda}$ driven by the noise $e^{-\tau}\xi(s)+\sqrt{1-e^{-2\tau}}\xi'(s)$, that is,
\begin{align*}
u_{\tau,\lambda}(t,x)&=1+\sqrt{\lambda}\int_0^t g_{\rho}(t-s,x)u_{\tau,\lambda}(s,0)\big(e^{-\tau}\xi(s)+\sqrt{1-e^{-2\tau}}\xi'(s)\big)\dd s. 
\end{align*} 
By Mehler’s formula \eqref{e:Mehler}, we have that $T_\tau u_{\lambda}(t,x)=\bE'[u_{\tau,\lambda}(t,x)]$. Then,
\begin{align*}
T_\tau u_{\lambda}(t,x)&=1+\sqrt{\lambda}\bE'\left[\int_0^t g_{\rho}(t-s,x)u_{\tau,\lambda}(s,0)\big(e^{-\tau}\xi(s)+\sqrt{1-e^{-2\tau}}\xi'(s)\big)\dd s\right]
\\&=1+\sqrt{\lambda}\int_0^t g_{\rho}(t-s,x)\bE'[u_{\tau,\lambda}(s,0)]e^{-\tau}\xi(s)\dd s
\\&=1+\sqrt{\lambda}e^{-\tau}\int_0^t g_{\rho}(t-s,x)T_\tau u_{\lambda}(s,0)\xi(s)\dd s.
\end{align*}
Thus, $T_\tau u_{\lambda}$ is a solution of \eqref{e:she} with $\beta=\sqrt{\lambda}e^{-\tau}$, and hence $T_\tau u_{\lambda}=u_{e^{-2\tau}\lambda}$ by the assumption that the solution is unique.  By the hypercontractivity of the Ornstein-Uhlenbeck semigroup (see \cite[Theorem 1.4.1]{Nua06}), we have 
\begin{align*}
\Vert u_{e^{-2\tau}\lambda}(t,x)\Vert_{L^{q(\tau)}(\bP)}=\Vert T_\tau u_{\lambda}(t,x)\Vert_{L^{q(\tau)}(\bP)}\leq\Vert u_{\lambda}(t,x)\Vert_{L^p(\bP)}
\end{align*}
for all $1<p<\infty$ and $\tau\geq0$ with $q(\tau)=1+e^{2\tau}(p-1)$. Then the desired result follows by choosing $e^{-2\tau}=\frac{p-1}{q-1}$.
\end{proof}
\begin{remark}\label{rem:p-2}
For $\rho=2$ and $\beta=1$, Proposition \ref{prop:local-L^2} shows that an $L^2$-solution exists for $t\in[0,t_c)$. Proposition \ref{prop:p-2-ineq} then allows us to establish the $L^r$-integrability of the solution for any $r>1$. If $r\in(1,2)$, then clearly $\|u(t,x)\|_{L^r(\bP)} \le \|u(t,x)\|_{L^2(\bP)}<\infty$ for $t\in[0,t_c)$ by Jensen's inequality; on the other hand, by applying Proposition~\ref{prop:p-2-ineq} with $p=r, q=2$ and $\lambda=1$, we have that $\Vert u(t,x)\Vert_{L^r(\bP)}\geq\Vert u_{r-1}(t,x)\Vert_{L^2(\bP)}$, which implies that $\|u(t,x)\|_{L^r(\bP)}=\infty$ for sufficiently large $t>0$. Similarly, if $r> 2$, then $\|u(t,x)\|_{L^r(\bP)} \ge \|u(t,x)\|_{L^2(\bP)}=\infty$ for $t>t_c$, and by applying Proposition~\ref{prop:p-2-ineq} with $p=2, q=r$ and $\lambda=r-1$, we have $\Vert u(t,x)\Vert_{L^r(\bP)}\le \Vert u_{r-1}(t,x)\Vert_{L^2(\bP)}<+\infty$ for $t>0$ small enough.

For $\rho\in(0,2)$, the argument above also applies. If an $L^p$-solution exists for some $p>1$, then there must be a local $L^2$-solution, which is a contradiction to Proposition \ref{prop:no-L^2}.
\end{remark}




\section{On \texorpdfstring{$L^1$}{}-Skorohod solution}\label{sec: L1-sol}

In this section, we  investigate the $L^1$-Skorohod solution to the SHE \eqref{e:she}. Throughout this section, we assume $\rho\in(1,2]$. 

\subsection{Local time of \texorpdfstring{$\rho$}{}-stable processes}\label{sec:local}

In preparation for the subsequent analysis, in this subsection we collect some preliminaries on the local time of the $\rho$-stable process $X$ for $\rho\in(1,2]$.

Let $L^x_s$ denote the local time of $X$ at level $x$ up to time $s$,  which is defined by 
\begin{equation}\label{e:local-time}
L_s^x:=\lim_{\e\to0^+}\int_0^s\delta_\e(X_r-x)\dd r=:\lim_{\e\to0^+} L_{s,\e}^x,
\end{equation}
where $\delta_\e$ is defined in \eqref{e:delta-e}. By \cite[Proposition 2 in Chapter V]{bertoin96}, the convergence is uniform in $t\in[0,T]$ in $L^2(\fP)$ if $\rho\in(1,2]$, i.e., 
\begin{equation}\label{e:u-c-L}
\lim_{\e\to0}\fE\bigg[\sup_{s\in[0,T]}\big|L_{s,\e}^x-L_s^x\big|^2\bigg]=0,
\end{equation}
and thus $L_s^x$ is $\fP$-a.s.\ a continuous  non-decreasing function in $t$.  In the following, we write $L_s=L^0_s$ when $x=0$ to simplify the notation.

For each $x\in \R$, 
\begin{equation}\label{e:nu-L}
\nu^x_L(A):=\int_0^T\mathbf{1}_{A}(s)\dd L^x_s,~ A\in \mathcal B([0,T])
\end{equation}
defines a  random measure on the Borel set $\mathcal{B}([0,T])$.

Denote 
\begin{equation}\label{e:nu_e}
\nu^x_{L,\e}(A):=\int_0^T\mathbf{1}_{A}(s)\dd L_{s,\e}^x,   ~ A\in \mathcal B([0,T]).
\end{equation}
We have the following weak convergence for $\nu_{L,\e}^x$. 
\begin{lemma}\label{lem:L-measure} For each $x\in \R$, as $\e\to0$, the sequence of random measures $\nu_{L,\e}^x$ converges weakly to $\nu_L^x$ in $L^2(\fP)$, i.e., 
for any $f\in C([0,T])$, 
\[\lim_{\e\to0}\fE \bigg|\int_0^T f(s)\dd \nu^x_{L,\e}-\int_0^T f(s)\dd \nu^x_L\bigg|^2=0.\]
\end{lemma}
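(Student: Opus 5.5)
We would prove the lemma by approximating $f$ with step functions and reducing everything to the uniform $L^2$ convergence \eqref{e:u-c-L}. The key observation is that both $\nu^x_{L,\e}$ and $\nu^x_L$ are Lebesgue--Stieltjes measures of non-decreasing continuous functions, namely $s\mapsto L^x_{s,\e}$ and $s\mapsto L^x_s$.

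First consider an indicator $f=\mathbf 1_{(a,b]}$ with $0\le a<b\le T$. Then
\[
\int_0^T f\,\dd\nu^x_{L,\e}-\int_0^T f\,\dd\nu^x_L=(L^x_{b,\e}-L^x_b)-(L^x_{a,\e}-L^x_a),
\]
and its $L^2(\fP)$ norm is at most $2\bigl(\fE\sup_{s\le T}|L^x_{s,\e}-L^x_s|^2\bigr)^{1/2}$, which tends to $0$ by \eqref{e:u-c-L}. By linearity, the convergence then holds for every step function $f_m=\sum_{j} f(t_{j})\mathbf 1_{(t_{j},t_{j+1}]}$ built on a partition of $[0,T]$ with mesh $T/m$. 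The point $\{0\}$ carries no mass, since both measures are continuous.

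Next we pass from step functions to a general continuous $f$ by uniform continuity. Writing $\omega_f$ for the modulus of continuity of $f$, we have $\|f-f_m\|_\infty\le\omega_f(T/m)$. Hence
\[
\Bigl|\int_0^T (f-f_m)\,\dd\nu^x_{L,\e}\Bigr|\le \omega_f(T/m)\,L^x_{T,\e},\qquad \Bigl|\int_0^T (f-f_m)\,\dd\nu^x_{L}\Bigr|\le \omega_f(T/m)\,L^x_{T}.
\]
To control these error terms we need a uniform moment bound $\sup_{\e\le 1}\fE[(L^x_{T,\e})^2]<\infty$. This follows from \eqref{e:u-c-L} together with $L^x_T\in L^2(\fP)$: for small $\e$ it is immediate from the convergence. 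Alternatively, it can be checked directly from $\fE[(L^x_{T,\e})^2]\le 2\int_{0<r<s<T}\sup_y g_\rho(r,y)\,\sup_y g_\rho(s-r,y)\,\dd r\,\dd s$, since $\sup_y g_\rho(r,y)\le Cr^{-1/\rho}$ is integrable when $\rho>1$.

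Combining the pieces with Minkowski's inequality gives
\[
\Bigl\|\int f\,\dd\nu^x_{L,\e}-\int f\,\dd\nu^x_L\Bigr\|_{L^2}\le C\,\omega_f(T/m)+\Bigl\|\int f_m\,\dd\nu^x_{L,\e}-\int f_m\,\dd\nu^x_L\Bigr\|_{L^2}.
\]
We first let $\e\to0$, which kills the second term for fixed $m$, and then let $m\to\infty$, which kills the first. The only genuinely nontrivial step is the uniform second-moment bound on $L^x_{T,\e}$, and that follows from \eqref{e:u-c-L}. The whole argument is therefore a routine $3\varepsilon$-type estimate.
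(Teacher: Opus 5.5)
Your proposal is correct and follows essentially the same $3\varepsilon$-argument as the paper. Both proofs approximate $f$ uniformly, control the approximation error by an $L^2(\fP)$ bound on the total masses $L^x_{T,\e}$ (which the paper also derives from \eqref{e:u-c-L}), and apply \eqref{e:u-c-L} to the approximant. The only difference is cosmetic: the paper uses $C^1$ approximants and integration by parts, $\int_0^T f_{n}\,\dd(\nu_L-\nu_{L,\e})=f_{n}(T)(L_T-L_{T,\e})-\int_0^T(L_s-L_{s,\e})f_{n}'(s)\,\dd s$, whereas with your step functions the same control by $\sup_s|L^x_{s,\e}-L^x_s|$ is immediate.
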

\begin{proof}
Without loss of generality, we assume $x=0$ and omit the superscript $x$. We show that for any $\eta>0$, there exists $\e_0=\e_0(\eta)$, such that for any $\e\in(0,\e_0)$,
\begin{equation*}
\bigg\|\int_0^T f(s)\dd \nu_{L,\e}-\int_0^T f(s)\dd \nu_L\bigg\|_{L^2(\fP)}<\eta.
\end{equation*}

Given $f\in C([0,T])$, let $\{f_n\}_{n\in\bN}$ be a sequence of continuously differentiable functions, such that $\lim_{n\to\infty}\|f_n-f\|_\infty=0$ on $C([0,T])$. We have
\begin{align*}
\bigg\|\int_0^T f(s)\dd \nu_{L}-\int_0^T f_n(s)\dd \nu_{L}\bigg\|_{L^2(\fP)}\leq\sup_{s\in[0,T]}\big|f(s)-f_n(s)\big|\big\|\nu_L([0,T])\big\|_{L^2(\fP)},
\end{align*}
and similarly
\begin{align*}
\bigg\|\int_0^T f(s)\dd \nu_{L,\e}-\int_0^T f_n(s)\dd \nu_{L,\e} \bigg\|_{L^2(\fP)}\leq\sup_{s\in[0,t]}\big|f(s)-f_n(s)\big|\big\|\nu_{L,\e}([0,T])\big\|_{L^2(\fP)}.
\end{align*}
Note that $\sup_{\e>0}\|\nu_{L,\e}([0,T])\|_{L^2(\fP)}<\infty$ by \eqref{e:u-c-L}. For given $\eta>0$, there exists $n_0=n_0(\eta)\in\bN$, such that for $n\geq n_0$ and any $\e>0$,
\begin{align}\label{e:triangle_1}
\bigg\|\int_0^T f(s)\dd \nu_{L}-\int_0^T f_n(s)\dd \nu_{L}\bigg\|_{L^2(\fP{})}\leq\frac{\eta}{3}\quad\text{and}\quad\bigg\|\int_0^T f(s)\dd \nu_{L,\e}-\int_0^T f_n(s)\dd \nu_{L,\e}\bigg\|_{L^2(\fP)}\leq\frac{\eta}{3}.
\end{align}

Then, by integration by parts, we have
\begin{equation*}
\begin{aligned}
\bigg\|\int_0^T f_{n_0}(s)\dd \nu_L-\int_0^T f_{n_0}(s)\dd \nu_{L,\e}\bigg\|_{L^2(\fP)}&= \bigg\|f_{n_0}(T)(L_T-L_{T,\e})-\int_0^T  (L_s-L_{s,\e})f_{n_0}'(s)\dd s\bigg\|_{L^2(\fP)}
\\
&\leq \bigg(\big|f_{n_0}(T)\big|+\int_0^T \big|f_{n_0}'(s)\big|\dd s\bigg)\bigg\|\sup_{s\in[0,T]}\big|L_s-L_{s,\e}(s)\big|\bigg\|_{L^2(\fP)}.
\end{aligned}
\end{equation*}
By \eqref{e:u-c-L}, there exists $\e_0=\e_0(\eta, n_0)$ such that for all $\e\in(0,\e_0)$, 
\begin{equation}\label{e:smooth-con}
\bigg\|\int_0^t f_{n_0}(s)\dd \nu_L-\int_0^t f_{n_0}(s)\dd \nu_{L,\e}\bigg\|_{L^2(\fP)}\leq\frac{\eta}{3}.
\end{equation}

Combining \eqref{e:triangle_1} and \eqref{e:smooth-con}, by the triangle inequality we have that for all $\e\in(0,\e_0)$,  
\begin{equation*}
\begin{split}
\bigg\|\int_0^T f(s)\dd \nu_{L,\e}-\int_0^T f(s)\dd \nu_L\bigg\|_{L^2(\fP)}\leq&\bigg\|\int_0^T f(s)\dd \nu_L-\int_0^T f_{n_0}(s)\dd \nu_L\bigg\|_{L^2(\fP)}\\
&+\bigg\|\int_0^T f(s)\dd \nu_{L,\e}-\int_0^T f_{n_0}(s)\dd \nu_{L,\e} \bigg\|_{L^2(\fP)}\\
&+\bigg\|\int_0^T f_{n_0}(s)\dd \nu_L-\int_0^T f_{n_0}(s)\dd \nu_{L,\e}\bigg\|_{L^2(\fP)}\leq \eta.
\end{split}
\end{equation*}
\end{proof}

For each $t\in[0,T]$, note that $\fP_{(t,x)}\big((L_s)_{0\leq s\leq t}\in\cdot\big)=\fP\big((L^x_{t-s})_{0\leq s\leq t}\in\cdot\big)$ by a time reversal.
The following lemma provides formulas for computing the $\fE_{(t,x)}$-expectation of integrals with respect to $\dd L_s$. 

\begin{lemma}\label{lem:L-property}
Let $F_s$ be an adapted process  and $f(s)$ be a deterministic measurable  function. Then
\begin{align}
&\quad\mathbf{E}_{(t,x)}\left[\int_0^t F_s\dd L_s\right]=\int_0^t \mathbf{E}_{(s,0)}\left[F_s\right]g_\rho(t-s,x)\dd \mathbf{s},\label{1}
\\&\quad\mathbf{E}_{(t,x)}\left[\int_0^t f(s)\dd L_s\right]= \int_0^t f(s)g_\rho(t-s,x)\dd \mathbf{s},\label{2}
\end{align}
provided the right-hand sides exist.  For a deterministic measurable symmetric function $f(s_1,\cdots,s_n)$, we have 
 \begin{equation}\label{3}
\begin{aligned}
&\quad\mathbf{E}_{(t,x)}\left[\int_{[0,t]^n}f(s_1,\cdots
,s_n)\dd L_{s_1}\cdots\dd L_{s_n}\right]\\
=&n!\int_{[0,t]_<^n}f(s_1,\cdots
,s_n)g_\rho(s_2-s_1,0)\cdots g_\rho(t-s_n,x)\dd \mathbf{s},
\end{aligned}
\end{equation}
provided the right-hand side exists.
\end{lemma}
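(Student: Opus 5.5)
The plan is to pass to the mollified local times $L_{s,\e}=\int_0^s\delta_\e(X_r)\dd r$, for which every identity is a plain Fubini computation against the transition density, and then let $\e\to0$ with Lemma~\ref{lem:L-measure} and the uniform $L^2$ convergence \eqref{e:u-c-L}. Under $\fP_{(t,x)}$ the path is the backward process $\tilde X_s=x+X_{t-s}$. By time reversal and the symmetry of $g_\rho$, the marginal density of $\tilde X_s$ evaluated at $0$ is $g_\rho(t-s,x)$.

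For \eqref{2}, take first $f\in C([0,t])$. Then
\[
\fE_{(t,x)}\Big[\int_0^t f(s)\dd L_{s,\e}\Big]=\int_0^t f(s)\int_\R \delta_\e(y)\,g_\rho(t-s,x-y)\dd y\,\dd s .
\]
Since $\delta_\e$ is an approximate identity and $g_\rho(t-s,\cdot)$ is continuous, this converges to $\int_0^t f(s)g_\rho(t-s,x)\dd s$. Here we use dominated convergence in $s$: for $x\neq0$ the kernel is bounded, and for $x=0$ it is bounded by $C(t-s)^{-1/\rho}$, which is integrable because $\rho>1$. The left-hand side converges by Lemma~\ref{lem:L-measure}, which applies after time reversal. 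This proves \eqref{2} for continuous $f$. A monotone class argument extends it to bounded measurable $f$, and monotone convergence applied to $f^\pm$ extends it to general $f$ whenever the right-hand side is finite.

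For \eqref{1}, the approach is to condition at time $s$. Under $\fP_{(t,x)}$, the bridge decomposition gives that conditionally on $\tilde X_s=0$ the pre-$s$ path has law $\fP_{(s,0)}$; this is the backward Markov property of the reversed process. "Adapted" is read as measurable with respect to the path up to time $s$. Then
\[
\fE_{(t,x)}\Big[F_s\,\delta_\e(\tilde X_s)\Big]=\int_\R \delta_\e(y)\,\fE_{(s,y)}[F_s]\,g_\rho(t-s,x-y)\dd y ,
\]
and letting $\e\to0$ yields \eqref{1}. This limit is the main obstacle. It needs $y\mapsto\fE_{(s,y)}[F_s]$ to be continuous at $0$, and it needs $\fE_{(t,x)}\int F_s\dd L_{s,\e}\to\fE_{(t,x)}\int F_s\dd L_s$. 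I would first treat simple processes $F_s=\sum_i G_i\1_{(t_i,t_{i+1}]}(s)$ with $G_i$ bounded continuous functionals of the path on $[0,t_i]$. For these the convergence follows from \eqref{e:u-c-L}. The continuity in $y$ follows from the Feller property: shifting the endpoint amounts to translating the path. A monotone class argument then extends \eqref{1} to bounded adapted $F$, and truncation with monotone convergence handles nonnegative $F$ and, when the right-hand side is finite, integrable $F$. Formula \eqref{2} is the special case $F_s=f(s)$.

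For \eqref{3}, I would use symmetry to write the left side as $n!$ times the integral over $[0,t]^n_<$, and then induct on $n$ by applying \eqref{1} with the adapted process
\[
F_{s_n}=\int_{[0,s_n]^{n-1}_<} f(s_1,\dots,s_n)\,\dd L_{s_1}\cdots\dd L_{s_{n-1}} .
\]
Here $L$ has continuous paths, so the diagonals carry no mass. The induction hypothesis, applied under $\fE_{(s_n,0)}$, gives
\[
\fE_{(s_n,0)}[F_{s_n}]=\int_{[0,s_n]^{n-1}_<} f\,g_\rho(s_2-s_1,0)\cdots g_\rho(s_n-s_{n-1},0)\dd s_1\cdots\dd s_{n-1} .
\]
Multiplying by $g_\rho(t-s_n,x)$ and integrating over $s_n$ gives the claim. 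Nonnegativity and finiteness are handled via $f^\pm$ as before.
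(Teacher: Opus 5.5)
Your proposal is correct and follows the same route as the paper: replace $\dd L_s$ by $\delta_\e(X_s)\,\dd s$, use Fubini and the Markov property at time $s$ to get $\int_\R\delta_\e(y)\,\fE_{(s,y)}[F_s]\,g_\rho(t-s,x-y)\,\dd y$, and let $\e\to0$, with \eqref{2} as the case $F_s=f(s)$. The difference is one of rigor. You justify the passage $\e\to0$ (simple processes built from continuous path functionals, continuity in $y$ by translating the path, then a monotone class argument) and derive \eqref{3} by induction from \eqref{1}, whereas the paper invokes dominated convergence directly for arbitrary bounded adapted $F$ and calls \eqref{3} similar. Your extra care is warranted: for $F_s=\1_{\{X_s=0\}}$ one has $\int_0^tF_s\,\delta_\e(X_s)\,\dd s=0$ a.s.\ while $\int_0^tF_s\,\dd L_s=L_t$, and $y\mapsto\fE_{(s,y)}[F_s]=\1_{\{y=0\}}$ is discontinuous, so the paper's literal chain of equalities fails there even though \eqref{1} itself still holds.
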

\begin{proof}
We prove only \eqref{1} and the proofs for \eqref{2} and \eqref{3} are similar. For a positive bounded adapted process $F_s$, we have  
\begin{align*}
\mathbf{E}_{(t,x)}\left[\int_0^t F_s\dd L_s\right]&= \mathbf{E}_{(t,x)}\left[\lim_{\e\to 0^+}\int_0^t F_s\delta_{\e}(X_s)\dd s\right]=\lim_{\e\to 0^+}\mathbf{E}_{(t,x)}\left[\int_0^t F_s\delta_{\e}(X_s)\dd s\right]
\\
&=\lim_{\e\to 0^+}\int_0^t \mathbf{E}_{(t,x)}\left[F_s\delta_{\e}(X_s)\right]\dd s=\lim_{\e\to 0^+}\int_0^t \int_\bR\mathbf{E}_{(s,y)}\left[F_s\right]\delta_{\e}(y)g_\rho(t-s,x-y)\dd y\dd s
\\
&=\int_0^t \lim_{\e\to 0^+}\int_\bR\mathbf{E}_{(s,y)}\left[F_s\right]\delta_{\e}(y)g_\rho(t-s,x-y)\dd y\dd s=\int_0^t \mathbf{E}_{(s,0)}\left[F_s\right]g(t-s,x)\dd s,
\end{align*}
where the second equality is due to the dominated convergence theorem, with the help of Lemma~\ref{lem:g-bound}, the third equality is due to Fubini's theorem, the fourth equality is due to the Markov property of the stable process and the adaptedness of $F_s$, and the last two equalities are also due to the dominated convergence theorem. This proves \eqref{1} for positive bounded $F_s$. The result extends to general positive adapted processes by the monotone convergence theorem, and the general case then follows from the decomposition $F=F^+-F^-$. 
\end{proof}

Recall that we have assumed $H\in(\frac12,1)$ throughout the paper. We have the following corollaries for the \emph{mutual energy} \eqref{e:al*} and the \emph{self-energy} \eqref{def:self_energy}, which will be used in Sections \ref{sec:Mut-e}--\ref{sec:al+H>1}.
\begin{corollary}\label{cor:XX}
We have that for $\rho\in(1,2]$,
$$\mathbf{E}_{(t,x)}\left[\int_0^t\int_0^t|r-s|^{2H-2}\dd L_r\dd L_s\right]<\infty,\quad\text{iff}~-\frac{1}{\rho}+2H>1.$$ 
\end{corollary}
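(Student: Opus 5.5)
Apply formula \eqref{3} of Lemma~\ref{lem:L-property} with $n=2$ to the nonnegative symmetric function $f(r,s)=|r-s|^{2H-2}$. This reduces the expectation to the deterministic integral
\[
\mathbf{E}_{(t,x)}\left[\int_0^t\int_0^t|r-s|^{2H-2}\dd L_r\dd L_s\right]=2\int_{0<s_1<s_2<t}(s_2-s_1)^{2H-2}g_\rho(s_2-s_1,0)\,g_\rho(t-s_2,x)\,\dd s_1\dd s_2 .
\]
Since the integrand is nonnegative, this identity holds in $[0,\infty]$ by monotone convergence, so the "provided the right-hand side exists" caveat causes no trouble. The lemma's proof already passes from bounded to general positive functionals via monotone convergence.

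Next use the scaling $g_\rho(s,0)=c_\rho s^{-1/\rho}$ with $c_\rho=g_\rho(1,0)\in(0,\infty)$. The inner factor becomes $c_\rho (s_2-s_1)^{2H-2-1/\rho}$. Integrating in $s_1\in(0,s_2)$ gives
\[
\int_0^{s_2} u^{2H-2-1/\rho}\dd u,
\]
which is finite iff $2H-2-\frac1\rho>-1$, i.e.\ iff $2H-\frac1\rho>1$. In that case it equals $C s_2^{2H-1-1/\rho}$.

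For the \textbf{finiteness direction}, bound the outer integral using $g_\rho(t-s_2,x)\le g_\rho(t-s_2,0)=c_\rho(t-s_2)^{-1/\rho}$. This uses unimodality of symmetric stable densities, or alternatively the bound from Lemma~\ref{lem:g-bound}. The outer integral is then a Beta integral
\[
\int_0^t s_2^{2H-1-1/\rho}(t-s_2)^{-1/\rho}\dd s_2,
\]
which is finite because $2H-1-\frac1\rho>0>-1$ and $\frac1\rho<1$ (recall $\rho>1$).

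For the \textbf{divergence direction}, suppose $2H-\frac1\rho\le 1$. Then for every $s_2>0$ the inner integral is $+\infty$. It remains to check that $g_\rho(t-s_2,x)>0$ on a set of $s_2$ of positive measure. This holds since stable densities are strictly positive everywhere; for $\rho=2$ it is just the Gaussian kernel. Hence the double integral is infinite.

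I expect no real obstacle. The only points needing care are the applicability of \eqref{3} with infinite values, handled by monotonicity, and the pointwise upper bound $g_\rho(\cdot,x)\le g_\rho(\cdot,0)$ uniformly in $x$, which follows from the Fourier representation $g_\rho(s,x)=\frac1{2\pi}\int e^{i\xi x}e^{-s|\xi|^\rho}\dd\xi$.
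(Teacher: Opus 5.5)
Your proposal is correct and is essentially the paper's proof. You apply \eqref{3} with $n=2$ and use $g_\rho(s,0)=c\,s^{-1/\rho}$, so the inner integral $\int_0^{r}(r-s)^{2H-2-1/\rho}\,\dd s$ is finite iff $2H-\frac1\rho>1$; you then bound the outer integral via Lemma~\ref{lem:g-bound} and $\rho>1$. You also make explicit two points the paper leaves implicit: that \eqref{3} holds in $[0,\infty]$ by monotone convergence, and that the divergence direction needs strict positivity of $g_\rho(t-r,x)$.
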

\begin{proof} By \eqref{3} and $g_\rho(r-s,0)=c(r-s)^{-1/\rho}$, we get
\begin{align*}
\mathbf{E}_{(t,x)}\left[\int_0^t\int_0^t|r-s|^{2H-2}\dd L_r\dd L_s\right]=2 c\int_0^t\dd r\int_0^r(r-s)^{-\frac{1}{\rho}+2H-2}g_\rho (t-r,x)\dd s,
\end{align*}
where the inner integral is finite if and only if $-\frac{1}{\rho}+2H-2>-1\Longleftrightarrow-\frac{1}{\rho}+2H>1$, and the outer integral is finite since $\rho>1$ by Lemma \ref{lem:g-bound}. 
\end{proof}
\begin{corollary}\label{cor:XX'} Let $L'$ be an independent copy of $L$. We have that for $\rho\in(1,2]$ $$\mathbf{E}_{(t,0)}^{\otimes2}\left[\int_0^t\int_0^t|r-s|^{2H-2}\dd L_r\dd L'_s\right]<\infty,\quad\text{iff}~H>\frac{1}{\rho},$$ 
and for $x\neq 0$,
$$\mathbf{E}_{(t,x)}^{\otimes2}\left[\int_0^t\int_0^t|r-s|^{2H-2}\dd L_r\dd L'_s\right]<\infty,\quad\text{ for all }\, H\in\Big(\frac12,1\Big).$$
\end{corollary}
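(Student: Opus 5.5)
The plan is to compute the expectation explicitly, using the independence of $L$ and $L'$ and formula \eqref{2} of Lemma~\ref{lem:L-property}, applied once for each copy. Since the integrand $|r-s|^{2H-2}$ is deterministic and nonnegative, Fubini/Tonelli lets us integrate out $L'$ first and then $L$:
\begin{align*}
\mathbf{E}_{(t,x)}^{\otimes2}\left[\int_0^t\int_0^t|r-s|^{2H-2}\dd L_r\dd L'_s\right]
=\int_0^t\int_0^t|r-s|^{2H-2}g_\rho(t-r,x)g_\rho(t-s,x)\dd r\dd s .
\end{align*}
To justify this rigorously, we first apply \eqref{2} with $f(s)=\int_0^t|r-s|^{2H-2}\dd L_r$ conditionally on $L$. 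This is legitimate because $L$ and $L'$ are independent and everything is nonnegative, so the monotone version of Lemma~\ref{lem:L-property} applies. We then apply \eqref{2} again to the resulting deterministic-kernel integral in $r$. Substituting $a=t-r$, $b=t-s$ reduces everything to analysing
\[
J(x):=\int_0^t\int_0^t|a-b|^{2H-2}g_\rho(a,x)g_\rho(b,x)\dd a\dd b .
\]

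\emph{Case $x=0$.} Here $g_\rho(a,0)=c\,a^{-1/\rho}$, so
\[
J(0)=c^2\int_0^t\int_0^t|a-b|^{2H-2}a^{-1/\rho}b^{-1/\rho}\dd a\dd b .
\]
Since $2H-2>-1$ and $1/\rho<1$, the only possible divergence is at the corner $a=b=0$. Passing to the scaling variables $a=\lambda u$, $b=\lambda(1-u)$ (or simply using homogeneity), the integrand is homogeneous of degree $2H-2-2/\rho$ in $(a,b)$. On the unit simplex the angular integral $\int_0^1|2u-1|^{2H-2}u^{-1/\rho}(1-u)^{-1/\rho}\dd u$ is finite. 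The radial integral is $\int_0^t\lambda^{2H-2-2/\rho}\lambda\,\dd\lambda$, which is finite iff $2H-2/\rho>0$, i.e.\ iff $H>1/\rho$. This gives the ``iff''.

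\emph{Case $x\neq0$.} By Lemma~\ref{lem:g-bound} (or the standard heat-kernel bound), $g_\rho(a,x)\le C\min(a^{-1/\rho},\,a|x|^{-1-\rho})$ for $\rho<2$, and for $\rho=2$ the Gaussian kernel decays faster still. Hence $g_\rho(\cdot,x)$ is bounded on $[0,t]$ by a constant $C_x$, so
\[
J(x)\le C_x^2\int_0^t\int_0^t|a-b|^{2H-2}\dd a\dd b<\infty
\]
for every $H\in(\frac12,1)$.

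The main obstacle is the first step: justifying the iterated use of \eqref{2} when the integrand against $\dd L_r$ is itself a random (in $L'$) function. I would handle it by truncating the kernel to $\min(|r-s|^{2H-2},N)$, which is bounded and continuous. For the truncated kernel, Lemma~\ref{lem:L-measure} and \eqref{2} apply directly, and we then let $N\to\infty$ by monotone convergence. The homogeneity computation in the case $x=0$ is routine once this is done.
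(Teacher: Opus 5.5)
Your proof is correct. For $x\neq 0$ it coincides with the paper's argument: bound $g_\rho(a,x)\le C|x|^{-(1+\rho)}a$ via Lemma~\ref{lem:g-bound}, then integrate $|a-b|^{2H-2}$.

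For $x=0$ you take a different route.
\begin{itemize}
\item The paper bounds the two directions separately. The upper bound comes from Lemma~\ref{lem:1/H norm}, giving $J(0)\le C\big(\int_0^t r^{-1/(\rho H)}\dd r\big)^{2H}$. The lower bound comes from restricting to $s<r$ and using $s^{-1/\rho}\ge r^{-1/\rho}$, which gives $\frac{1}{2H-1}\int_0^t r^{2H-1-2/\rho}\dd r$.
\item You instead use exact homogeneity. You factor $J(0)$ into a finite positive angular constant times the radial integral of $\lambda^{2H-1-2/\rho}$.
\end{itemize}
To make your radial step precise, note that the image of $[0,t]^2$ under $(a,b)\mapsto(\lambda,u)$ contains $\{\lambda\le t\}$ and is contained in $\{\lambda\le 2t\}$. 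So $J(0)$ is sandwiched between the angular constant times $\int_0^t$ and times $\int_0^{2t}$ of $\lambda^{2H-1-2/\rho}\dd\lambda$.

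Your version has three advantages. It gives both directions at once, it is self-contained, and it yields the exact scaling $J(0)=c_{H,\rho}\,t^{2H-2/\rho}$ whenever this is finite. The paper's version instead reuses the $L^{1/H}$ embedding, which it needs anyway for the multi-time densities $\rho_{k,L}$ (Lemma~\ref{lem:rhoKL-inH}); there a homogeneity argument would be clumsier.

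Your explicit justification of the iterated-expectation formula is more careful than the paper's, which simply asserts it. That justification applies \eqref{2} first in $L'$ with $L$ frozen, then in $L$, with Tonelli in between. The truncation of the kernel is harmless but unnecessary. The proof of Lemma~\ref{lem:L-property} already extends \eqref{2} to nonnegative measurable $f$ by monotone convergence.
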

\begin{proof}
As in the proof of Corollary \ref{cor:XX}, when $x=0$, we have 
\begin{equation*}
    \begin{aligned}
\mathbf{E}_{(t,0)}^{\otimes2}\left[\int_0^t\int_0^t|r-s|^{2H-2}\dd L_r\dd L'_s\right]=&c\int_0^t\int_0^t(t-r)^{-\frac{1}{\rho}}(t-s)^{-\frac{1}{\rho}}|r-s|^{2H-2}\dd r\dd s\\
=& c\int_0^t\int_0^t r^{-\frac{1}{\rho}}s^{-\frac{1}{\rho}}|r-s|^{2H-2}\dd r\dd s
\leq C\bigg(\int_0^t r^{-\frac{1}{\rho H}}\dd r\bigg)^{2H}
\end{aligned}
\end{equation*}
by Lemma \ref{lem:1/H norm}, which is finite if $H>\frac{1}{\rho}$. On the other hand, note that
\begin{equation*}
\begin{split}
&\int_0^t\int_0^t r^{-\frac{1}{\rho}}s^{-\frac{1}{\rho}}|r-s|^{2H-2}\dd r\dd s
\geq\int_0^t r^{-\frac{1}{\rho}}\dd r\int_0^r s^{-\frac{1}{\rho}}(r-s)^{2H-2}\dd s\\
\geq&\int_0^t r^{-\frac{2}{\rho}}\dd r\int_0^r (r-s)^{2H-2}\dd s=\frac{1}{2H-1}\int_0^t r^{-\frac{2}{\rho}+2H-1}\dd r,
\end{split}
\end{equation*}
which is finite if $-\frac{2}{\rho}+2H-1>-1$, which is the same condition as $H>\frac{1}{\rho}$.

When $x\ne0$, we have
\begin{equation*}
\begin{aligned}
\mathbf{E}_{(t,x)}^{\otimes2}\left[\int_0^t\int_0^t|r-s|^{2H-2}\dd L_r\dd L'_s\right]&=\int_0^t\int_0^tg_\rho (t-r,x)g_\rho (t-s,x)|r-s|^{2H-2}\dd r\dd s\\
&=\int_0^t\int_0^tg_\rho (r,x)g_\rho (s,x)|r-s|^{2H-2}\dd r\dd s.
\end{aligned}
\end{equation*}
By Lemma \ref{lem:g-bound}, $g_\rho(t,x)\leq C|x|^{-(1+\rho)}t$. Hence, the integral is finite for all $H>\frac12$.
\end{proof}

\

\subsection{Projections of the equation}\label{sec:project_eq}
Recall the mild solution to the SHE \eqref{e:she} with $u_0(x)\equiv1$ from Definition~\ref{def: solution}, i.e., 
\begin{equation}\label{e:mild-solution}
u(t,x)=1+\sint g_{\rho}(t-\cdot,x)u(\cdot,0)\mathbf{1}_{[0,t]}(\cdot)\xi.
\end{equation}
Given that such a mild solution $u$ exists, we show in the following proposition that its conditional expectation $\E[u|\xi_1, \dots, \xi_n]$ solves the corresponding SHE driven by the finite-dimensional noise $\{\xi_1, \dots, \xi_n\}$.
\begin{proposition}\label{prop:finit n}
Let  $u\in L^1(\Xi)$ be a mild solution to \eqref{e:she} in the sense of Definition~\ref{def: solution}. Then $u_n:=\bE[u|\cF_n]=\bE[u|\xi_1,\dots,\xi_n]$ solves the projected equation (see Definition \ref{def:projection})
\begin{equation}\label{e:eq-u_n}
\begin{aligned}
u_n(t,x)=1+\sintn  \mathbf{1}_{[0,t]}(\cdot) g_{\rho}(t-\cdot,x)u_n(\cdot,0)\xi. 
\end{aligned}
\end{equation}
\end{proposition}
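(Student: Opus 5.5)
Take conditional expectations given $\mathcal F_n$ on both sides of the mild equation \eqref{e:mild-solution} and invoke Proposition~\ref{prop-conditional expectation}. Fix $(t,x)$ and let $U=(U_k)_{k\in\bN}$ with
\[
U_k:=\big\langle \mathbf 1_{[0,t]}(\cdot)g_\rho(t-\cdot,x)u(\cdot,0),e_k\big\rangle_{\mathcal H}
=\int_0^T\int_0^T e_k(r)\mathbf 1_{[0,t]}(s)g_\rho(t-s,x)u(s,0)|r-s|^{2H-2}\dd r\dd s .
\]
By Definition~\ref{def: solution} and Definition~\ref{def:Skorohod-integral}, $U$ is $L^1$-Skorohod integrable, each $U_k\in L^1$, and $u(t,x)=1+\ssint U\xi$. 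Proposition~\ref{prop-conditional expectation} then gives
\[
u_n(t,x)=\bE[u(t,x)\,|\,\mathcal F_n]=1+\sintn \bE[U\,|\,\mathcal F_n]\,\xi ,
\]
together with the integrability of $\bE[\mathcal P_nU|\mathcal F_n]$. It remains to identify $\bE[U_k|\mathcal F_n]$, for $k\le n$, as the $k$-th coordinate of the process $\mathbf 1_{[0,t]}(\cdot)g_\rho(t-\cdot,x)u_n(\cdot,0)$.

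This identification is a conditional Fubini step. The kernel $\mathbf 1_{[0,t]}(s)g_\rho(t-s,x)\int_0^T e_k(r)|r-s|^{2H-2}\dd r$ is deterministic, and $e_k$ is bounded, so $\sup_s\int_0^T|e_k(r)||r-s|^{2H-2}\dd r<\infty$. Hence
\[
\bE[U_k|\mathcal F_n]=\int_0^t g_\rho(t-s,x)\Big(\int_0^T e_k(r)|r-s|^{2H-2}\dd r\Big)\bE[u(s,0)|\mathcal F_n]\,\dd s ,
\]
and $\bE[u(s,0)|\mathcal F_n]=u_n(s,0)$ by definition. Justifying the interchange of conditional expectation and the $\dd s$ integral needs
\[
\int_0^t g_\rho(t-s,x)\,\bE|u(s,0)|\,\dd s<\infty,
\]
together with joint measurability of $(s,\omega)\mapsto u(s,0)$, so that a version of $u_n(s,0)$ measurable in $s$ can be chosen.

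The main obstacle is this integrability. The definition only asserts that each $U_k$ is in $L^1$, which does not by itself give absolute integrability of $s\mapsto g_\rho(t-s,x)\,\bE|u(s,0)|$. I would first read the definition of solution as implicitly including this (it is needed for $U_k$ to make sense as a Bochner/Lebesgue integral in $L^1(\Xi)$), i.e.\ $\mathbf 1_{[0,t]}g_\rho(t-\cdot,x)u(\cdot,0)$ is in $L^1(\Xi;L^1([0,t]))$. Given that, the conditional Fubini theorem applies, using that $g_\rho(t-s,x)$ is integrable in $s$ for $\rho>1$ by Lemma~\ref{lem:g-bound} and $\sup_{s\le t}\bE|u(s,0)|<\infty$. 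If that reading is not available, the fallback is to show the map is Bochner integrable as an $L^1(\Xi)$-valued function: conditional expectation is a bounded linear operator on $L^1(\Xi)$, so it commutes with Bochner integrals. Substituting back then yields \eqref{e:eq-u_n}, and the adaptedness of $u_n$ is immediate.
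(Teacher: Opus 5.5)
Your argument is the paper's proof: condition the mild equation on $\mathcal F_n$, apply Proposition~\ref{prop-conditional expectation}, and pull the deterministic factor $\mathbf 1_{[0,t]}g_\rho(t-\cdot,x)$ out of $\bE[\,\cdot\,|\mathcal F_n]$. The paper does this in one line and never justifies exchanging the conditional expectation with the $\dd s$-integral, so the hypothesis $\int_0^t g_\rho(t-s,x)\,\bE|u(s,0)|\,\dd s<\infty$ that you flag is exactly what that step silently uses (note that $\sup_{s\le t}\bE|u(s,0)|<\infty$ is not supplied by Definition~\ref{def: solution} either, and your Bochner-integral fallback needs essentially the same integrability, so it is not an independent route).
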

\begin{proof}
By Proposition~\ref{prop-conditional expectation},
    we have
 \begin{equation*}
\begin{aligned}
 \bE\bigg[\sint g_{\rho}(t-\cdot,x)u(\cdot,0)\mathbf{1}_{[0,t]}(\cdot)\xi\bigg|\mathcal{F}_n\bigg]&=\sintn\bE\big[g_{\rho}(t-\cdot,x)u(\cdot,0)\mathbf{1}_{[0,t]}\big|\mathcal{F}_n\big]\xi\\&=\sintn g_{\rho}(t-\cdot,x)
\mathbf{1}_{[0,t]}\bE\big[u(\cdot,0)\big|\mathcal{F}_n\big]\xi\\&=\sintn g_{\rho}(t-\cdot,x)
\mathbf{1}_{[0,t]}u_n(\cdot,0)\xi.
\end{aligned}   
 \end{equation*}
Taking the conditional expectation $\bE[\cdot|\cF_n]$ for \eqref{e:mild-solution}, we get  \eqref{e:eq-u_n}. 
\end{proof}

Next, we provide an explicit formula for $u_n$. Recall that $\{e_k(t)\}_{k\in\bN}$ is a bounded orthonormal basis of $\cH$. To simplify notations, throughout the rest of this paper we denote
\begin{equation}\label{def:tilde_e}
\tilde{e}_k(t):=\int_0^T e_k(r)|t-r|^{2H-2}\dd r.
\end{equation}
It is not hard to check that $\tilde{e}_k(t)$ is also a bounded function. We have the following proposition.
\begin{proposition}\label{prop:exis-uniq-u-n}
There exists a unique solution to \eqref{e:eq-u_n}, which is given by 
\begin{equation}\label{e:unt}
u_{n}(t,x)=Z_n(t,x):=\mathbf{E}_{(t,x)}[M_{n}(t)],
\end{equation}
where 
\begin{equation}\label{e:Mnt}
M_{n}(t):=\exp \left\{\sum_{k=1}^n\left(m_{k}(t)\xi_k-\frac{1}{2}m_{k}(t)^2\right)\right\}
\end{equation}
with $m_k(t)$ defined by 
\begin{equation}\label{e:mkt}
 m_{k}(t):=\int_0^t \tilde{e}_k(s)\dd L_s.
\end{equation}
\end{proposition}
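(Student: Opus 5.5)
The plan is to treat \eqref{e:eq-u_n} as an equation on the finite-dimensional Gaussian space generated by $(\xi_1,\dots,\xi_n)$, and to exploit the duality in Definition~\ref{def-Skorohod}. Write $\tilde e(s)=(\tilde e_1(s),\dots,\tilde e_n(s))$ and $m(t)=(m_1(t),\dots,m_n(t))$. For an $\cF_n$-measurable $u_n$, the integrand in \eqref{e:eq-u_n} has projected coordinates
\[
U_k(t,x)=\int_0^t g_\rho(t-s,x)\,u_n(s,0)\,\tilde e_k(s)\,\dd s,\qquad k\le n.
\]
So \eqref{e:eq-u_n} is equivalent to the following: for every bounded $f:\R^n\to\R$ with bounded gradient,
\[
\E\big[(u_n(t,x)-1)\,f(\xi_1,\dots,\xi_n)\big]=\E\big[U(t,x)\cdot\nabla f(\xi_1,\dots,\xi_n)\big].
\]
I will verify this identity for $Z_n$ (existence), and then show that it determines the solution (uniqueness).

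\textbf{Existence.} Fix a path of $X$, so that $m(s)$ is deterministic and continuous and of bounded variation in $s$.
\begin{enumerate}
\item By the finite-dimensional Cameron--Martin identity, $\E[M_n(s)f(\xi)]=\E[f(\xi+m(s))]$.
\item By \eqref{e:mkt}, $\dd m_k(s)=\tilde e_k(s)\,\dd L_s$. The chain rule for Stieltjes integrals then gives
\[
f(\xi+m(t))-f(\xi)=\int_0^t \nabla f(\xi+m(s))\cdot\tilde e(s)\,\dd L_s .
\]
\item Taking $\E$ and applying Cameron--Martin again for each $s$ yields
\[
\E\big[(M_n(t)-1)f(\xi)\big]=\int_0^t \E\big[M_n(s)\,\nabla f(\xi)\cdot\tilde e(s)\big]\,\dd L_s .
\]
\item Take $\fE_{(t,x)}$ and use \eqref{1} of Lemma~\ref{lem:L-property} with the adapted process $F_s=\E[M_n(s)\nabla f(\xi)\cdot \tilde e(s)]$. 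This turns $\fE_{(s,0)}[M_n(s)]$ into $Z_n(s,0)$ and produces the factor $g_\rho(t-s,x)$. Exchanging $\E$ and $\fE$ by Fubini gives exactly $\E[U\cdot\nabla f]$ with $u_n=Z_n$.
\end{enumerate}
The interchanges need integrability. Since $f$ and $\nabla f$ are bounded and the $\tilde e_k$ are bounded, it suffices to have $\fE_{(t,x)}[L_t]<\infty$, which holds by \eqref{2}, and $\int_0^t g_\rho(t-s,x)\,\E Z_n(s,0)\,\dd s<\infty$. The latter holds because $\E Z_n=1$ and $g_\rho(\cdot,x)$ is integrable for $\rho>1$ (Lemma~\ref{lem:g-bound}). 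I also need $Z_n(t,x)\in L^1$, which follows from $\E Z_n(t,x)=1$, and $\cF_n$-measurability, which is clear.

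\textbf{Uniqueness.} Let $v=u_n-Z_n$ be the difference of two solutions. It is $\cF_n$-measurable and satisfies the homogeneous version of the duality identity. I will test it against the bounded complex exponentials $f(y)=e^{i\lambda\cdot y}$, $\lambda\in\R^n$, which is allowed after splitting into real and imaginary parts. Setting $\psi(t,x;\lambda)=\E[v(t,x)e^{i\lambda\cdot\xi}]$, this gives
\[
\psi(t,x;\lambda)=i\int_0^t g_\rho(t-s,x)\,\big(\lambda\cdot\tilde e(s)\big)\,\psi(s,0;\lambda)\,\dd s .
\]
At $x=0$ this is a linear Volterra equation with kernel $|\lambda|\,\|\tilde e\|_\infty\, C(t-s)^{-1/\rho}$, which is weakly singular but integrable since $\rho>1$. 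Iterating $m$ times bounds $|\psi(t,0)|$ by
\[
\sup_{s\le t}|\psi(s,0)|\cdot C^m\,\frac{\Gamma(1-1/\rho)^m\, t^{m(1-1/\rho)}}{\Gamma(m(1-1/\rho)+1)},
\]
using Lemma~\ref{lem:a direct calculate}, and this tends to $0$ as $m\to\infty$. Hence $\psi(\cdot,0;\lambda)\equiv0$, and then $\psi(t,x;\lambda)=0$ for all $x$. The signed measure $v(t,x)\,\dd\bP$ restricted to $\cF_n$ therefore has vanishing Fourier transform, so $v(t,x)=0$ almost surely.

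\textbf{Main obstacle.} I expect the delicate point to be the a priori bound $\sup_{s\le t}\E|u_n(s,0)|<\infty$, together with measurability in $s$, for an arbitrary solution. This is needed both to run the Volterra iteration and to justify Fubini in defining $U$. I would build it into the class of solutions considered, namely $L^1$-integrable solutions that are locally bounded in $L^1$ uniformly in time, which is implicit in Definition~\ref{def: solution}. For $Z_n$ itself the bound is automatic, since $\E Z_n=1$.
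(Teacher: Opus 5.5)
Your proposal is correct. The uniqueness half is the paper's argument. You test against $e^{i\lambda\cdot\xi}$, obtain the same Volterra equation for $\E[(u-v)(t,0)e^{i\lambda\cdot\xi}]$, and conclude from the vanishing Fourier transform. Where the paper cites the Henry--Gr\"onwall inequality, you iterate the weakly singular kernel by hand via Lemma~\ref{lem:a direct calculate}, which amounts to the same thing.

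The existence half is organized dually. The paper expands $M_n(t)=1+\int_0^t M_n(s)\,\dd G_n(s)$ pathwise and rewrites $M_n(\xi_k-m_k)$ as $M_n\xi_k-\partial_{\xi_k}M_n$. It then uses Gaussian integration by parts \eqref{ibp} to move $\xi_k-\partial_{\xi_k}$ onto the test function. You instead use Cameron--Martin to move $M_n$ onto the shifted test function $f(\xi+m(s))$, apply the Stieltjes chain rule in $s$, and shift back. Gaussian integration by parts is the infinitesimal form of Cameron--Martin, so this is the same computation in a different order. Your ordering has a practical advantage: the integrand $F_s=\E[\nabla f(\xi+m(s))]\cdot\tilde e(s)$ is bounded uniformly in the path, so Lemma~\ref{lem:L-property} and every Fubini interchange apply immediately. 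The paper instead needs its briefly sketched domination of $M_n(s)(\xi_k-m_k(s))$ for fixed $\xi$.

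The a priori bound you single out is genuinely needed, and the paper uses it tacitly. Both the interchange of $\E$ and $\dd s$ in \eqref{eq:w} and the local-integrability hypothesis of Henry--Gr\"onwall require something like $\int_0^t g_\rho(t-s,x)\,\E|(u-v)(s,0)|\,\dd s<\infty$. Definition~\ref{def: solution} does not obviously supply this. Stating uniqueness within the class of solutions with $\sup_{s\le t}\E|u_n(s,0)|<\infty$ is therefore a legitimate way to make the argument rigorous, not a weakness of your proposal relative to the paper.
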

\begin{proof}
Note that $m_k(t)=\int_0^t \tilde e_k(s)\dd L_s$ is well-defined $\fP_{(t,x)}$-a.s.\ (see Section \ref{sec:local}),
and thus $G_n(t):=\sum_{k=1}^n\left(m_{k}(t)\xi_k-\frac{1}{2}m_{k}(t)^2\right)$ is also well-defined $(\fP_{(t,x)}\times\bP)$-a.s. By \eqref{e:Mnt}, we have 
$$\begin{aligned}
M_n(t) &= 1 + \int_0^t M_n(s)\dd G_n(s)=1 + \int_0^t M_n(s) \sum_{k=1}^n \left( \xi_k - m_k(s) \right) \mathrm{d}m_k(s) \\
&= 1 + \sum_{k=1}^n \int_0^t \left( M_n(s)\xi_k - M_n(s)m_k(s) \right) \mathrm{d}m_k(s).
\end{aligned}$$
Then we get
\begin{equation}\label{e:u_n}
\begin{aligned}
u_n(t,x):=&\mathbf{E}_{(t,x)}\left[M_n(t)\right]=1+\mathbf{E}_{(t,x)}\left[ \sum_{k=1}^n \int_0^t \left( M_n(s)\xi_k - M_n(s)m_k(s) \right) \dd m_k(s) \right]\\
=&1+\mathbf{E}_{(t,x)}\left[ \sum_{k=1}^n \int_0^t \tilde{e}_k(s) \left( M_n(s)\xi_k - M_n(s)m_k(s) \right)\dd L_s \right]\\
=&1+\mathbf{E}_{(t,x)}\left[\sum_{k=1}^{n}\int_0^t\tilde{e}_k(s)\left(M_n(s)\xi_k-\partial_{\xi_k} M_n(s)\right)\dd L_s\right].
\end{aligned}
\end{equation}
Note that conditional on $\xi$, the term $\exp(-\frac12\sum_{k=1}^n m_k(s)^2)$ in $M_n(s)$ dominates the integrand in the above integral. Hence, $\sum_{k=1}^n \big|\tilde e_k(s)\left( M_n(s)\xi_k - \partial_{\xi_k}M_n(s)\right)\big|$ is uniformly bounded $\bP$-a.s., and we can apply Lemma~\ref{lem:L-property} to the second term on the right-hand side of \eqref{e:u_n} and get 
\begin{equation*}
\begin{aligned}
&\quad \mathbf{E}_{(t,x)}\left[\sum_{k=1}^{n}\int_0^t\tilde{e}_k(s)\Big(M_n(s)\xi_k-\partial_{\xi_k} M_n(s)\Big)\dd L_s\right]
\\&=\int_0^t\mathbf{E}_{(s,0)}\left[\sum_{k=1}^{n}\tilde{e}_k(s)\Big(M_n(s)\xi_k-\partial_{\xi_k} M_n(s)\Big)\right]g_{\rho}(t-s,x)\dd s.
\end{aligned}   
\end{equation*}

In view of \eqref{e:eq-u_n}, it remains to verify that
\begin{equation}\label{eq:verify_2nd}
\int_0^t\mathbf{E}_{(s,0)}\left[\sum_{k=1}^{n}\tilde{e}_k(s)\Big(M_n(s)\xi_k-\partial_{\xi_k} M_n(s)\Big)\right]g_{\rho}(t-s,x)\dd s=\sintn\mathbf{1}_{[0,t]}(\cdot) g_{\rho}(t-\cdot,x)u_n(\cdot,0)\xi.
\end{equation}
Recall the definition of $L^1$-Skorohod integral from Definitions~\ref{def-Skorohod} and  \ref{def:Skorohod-integral}. For a smooth function $F :\bR^n\to\bR$ with bounded derivatives, we claim 
\begin{equation}\label{ibp}
\begin{aligned}
&\bE\left[F(\xi)\sum_{k=1}^{n}\tilde{e}_k(s)\Big(M_n(s)\xi_k-\partial_{\xi_k} M_n(s\Big)\right]
=\bE\Big[\nabla F(\xi)\cdot\big(\tilde{e}_1(s),\cdots,\tilde{e}_n(s)\big)M_n(s)\Big],
\end{aligned}   
\end{equation}
where $F(\xi) := F(\xi_1, \dots, \xi_n)$. Denote $\mathcal{J}_{\hat{j}}:=\sigma\{\xi_i,i\neq j\}$ and $\hat{\xi}_k(x):=(\xi_1,\cdots,\xi_{k-1},x,\xi_{k+1},\cdots,\xi_n)$. We have that by integration by parts,
\begin{equation}\label{3.5}
\begin{aligned}
&\quad \bE\big[F(\xi)\tilde{e}_k(s)M_n(s)\xi_k \big|\mathcal{J}_{\hat{k}}\big]
\\&=\int_{\bR}  F\big(\hat{\xi}_k(x)\big)\tilde{e}_k(s)M_n\big(\hat{\xi}_k(x)\big)(s)\cdot x \frac{1}{\sqrt{2\pi}}e^{-\frac{x^2}{2}}\dd x
\\&=\int_{\bR}\frac{1}{\sqrt{2\pi}}e^{-\frac{x^2}{2}}\tilde{e}_k(s)\left(\frac{\partial F}{\partial x_k}\big(\hat{\xi}_k(x)\big) M_n\big(\hat{\xi}_k(x)\big)(s)+F\big(\hat{\xi}_k(x)\big)\partial_{\xi_k} M_n\big(\hat{\xi}_k(x)\big)(s)\right)\dd x
\\&=\bE\left[\frac{\partial F}{\partial x_k}(\xi)\tilde{e}_k(s) M_n(s)+F(\xi)\tilde{e}_k(s)\partial_{\xi_k} M_n(s)\Big|\mathcal{J}_{\hat{k}}\right].
\end{aligned}   
\end{equation}
Summing over $k$ and taking expectations in \eqref{3.5},  we prove \eqref{ibp}.

Then we have 
\begin{equation}\label{ibp2}
\begin{aligned}
&\bE\left[F(\xi)\int_0^t\fE_{(s,0)}\sum_{k=1}^{n}\tilde{e}_k(s)\Big(M_n(s)\xi_k-\partial_{\xi_k}M_n(s)\Big)g_{\rho}(t-s,x)\dd s\right]\\
=&\int_0^t\fE_{(s,0)}\bE\left[F\sum_{k=1}^{n}\tilde{e}_k(s)\Big(M_n(s)\xi_k-\partial_{\xi_k}M_n(s)\Big)\right]g_{\rho}(t-s,x)\dd s\\
\xlongequal{\eqref{ibp}}&\int_0^t\fE_{(s,0)}\bE\Big[\nabla F\cdot\big(\tilde{e}_1(s),\cdots,\tilde{e}_n(s)\big)M_n(s)\Big]g_{\rho}(t-s,x)\dd s\\
=&\int_0^t \bE\Big[\nabla F\cdot\big(\tilde{e}_1(s),\cdots,\tilde{e}_n(s)\big)\fE_{(s,0)}\big[M_n(s)\big]\Big]g_{\rho}(t-s,x)\dd s\\
=&\bE\left[\int_0^T \mathbf{1}_{[0,t]}(s)\nabla F\cdot\left(\tilde{e}_1(s),\cdots,\tilde{e}_n(s)\right)u_n(s,0)g_{\rho}(t-s,x)\dd s\right]\\
=&\bE\left[F(\xi)\sintn \mathbf{1}_{[0,t]}(\cdot)g_{\rho}(t-\cdot,x)u_n(\cdot,0)\xi\right],
\end{aligned}   
\end{equation}
which verifies \eqref{eq:verify_2nd}, where the last equality follows from Definitions~\ref{def:Skorohod-integral} and \ref{def:projection}, and the notation \eqref{def:tilde_e}. Therefore, $u_{n}(t,x)=Z_n(t,x):=\fE_{(t,x)}[M_{n}(t)]$ is a solution to \eqref{e:eq-u_n}.

Finally, we prove the uniqueness of the solution. Let $u$, $v$ be two solutions to \eqref{e:eq-u_n}. Consider Fourier type test functions $F(\xi)=\exp\left(i\sum_{k=1}^n \lambda_k\xi_k\right)$. Let $w:=\bE[(u-v)F(\xi)]$. By Definition~\ref{def:Skorohod-integral}, we have that
\begin{equation}\label{eq:w}
\begin{aligned}
w(t,x)&=\bE\left[F(\xi)\sintn \mathbf{1}_{[0,t]}(s) g_{\rho}(t-s,x)(u-v)(s,0) \xi\right]
\\&=\bE\left[ \sum_{j=1}^n i\lambda_j F(\xi) \langle \mathbf{1}_{[0,t]}(\cdot) g_{\rho}(t-\cdot,x)(u-v)(\cdot,0),e_j\rangle_{\mathcal{H}}\right]
\\&=\bE\left[ \sum_{j=1}^n \int_{0}^T i\lambda_j\mathbf{1}_{[0,t]}(s)g_{\rho}(t-s,x)(u-v)(s,0)F(\xi)\tilde{e}_j(s)\dd s\right]
\\&=:\int_0^t\tilde{h}(s) g_{\rho}(t-s,x)w(s,0)\dd s,
\end{aligned}
\end{equation}
where $\tilde{h}(s):=\sum_{j=1}^n i\lambda_j\tilde{e}_j(s)$. Thus, set $x=0$ and we have
\begin{align*}
|w(t,0)|&\leq C\int_0^t|\tilde{h}(s)|g_{\rho}(t-s,0)|w(s,0)|\dd s
\\&\leq C\Vert \tilde h\Vert_\infty\int_0^t (t-s)^{-\frac{1}{\rho}}|w(s,0)|\dd s.
\end{align*}
By Henry-Gr\"{o}nwall Inequality \cite[Lemma 7.1.1]{henry06}, we have $|w(t,0)|\equiv0, \forall t\in[0,T]$. Then by \eqref{eq:w}, $w(t,x)\equiv0$. Denote $\xi_{[n]}:=(\xi_1,\dots, \xi_n)$ and note that $w(t,x)$ is the Fourier transform of $(u-v)(t,x,\xi_{[n]})(2\pi)^{-n/2}\exp(-\frac{1}{2}\|\xi_{[n]}\|^2)$ at $(\lambda_1, \dots \lambda_n)$. As an $L^1(\bR^n)$-function is almost everywhere determined by its Fourier transform, it follows that, for each $(t,x)$, $(u-v)(t,x,\xi_{[n]})=0$  for almost all $\xi_{[n]}\in  \R^n$, and hence $(u-v)(t,x)=0$ $\mathbb P$-a.s. 
\end{proof}

\begin{remark}
We introduce the duplicate notation $Z_n$ for $u_n$, as we will later interpret $u_n$ as a partition function, for which the notation 
$Z_n$ is standard.
\end{remark}

It is not hard to check that $u_n(t,x)$ defined in \eqref{e:unt} is a nonnegative martingale with respect to $\cF_n=\sigma(\xi_1,\cdots,\xi_n)$. Hence, by the martingale convergence theorem, the limit $u(t,x):=\lim_{n\to\infty}u_n(t,x)$ exists $\bP$-a.s. If
this convergence can be strengthened in $L^1(\Xi)$, then $u_n(t,x)=\bE[u(t,x)|\cF_n]$, and in view of Proposition \ref{prop:finit n}, $u$ should be the solution to the SHE \eqref{e:she}. To be specific, we have the following result.

\begin{proposition}[Existence and uniqueness of the solution to \eqref{e:she}]\label{prop:un-u} Suppose that for almost everywhere $(t,x)\in[0,T]\times\bR$, the martingale $u_n(t,x)$ given in \eqref{e:unt} converges in $L^1(\Xi)$. Then the limit $u(t,x)$ is a solution to \eqref{e:she}. The solution is unique $\mathbb P$-a.s. for each $(t,x)$.
\end{proposition}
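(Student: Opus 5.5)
The plan has two parts: existence through the $L^1$-Skorohod machinery of Proposition~\ref{uniformly-Skorohod}, and uniqueness through Proposition~\ref{prop:finit n} combined with Proposition~\ref{prop:exis-uniq-u-n}.

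\emph{Existence.} Fix $(t,x)$ at which $u_n(t,x)\to u(t,x)$ in $L^1(\Xi)$. Because $u_n(t,x)$ is an $\cF_n$-martingale converging in $L^1$, we have $u_n(t,x)=\bE[u(t,x)|\cF_n]$. Hence the family $\{u_n(t,x)\}_n$ is uniformly integrable.

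Consider the process $v(s):=\1_{[0,t]}(s)g_\rho(t-s,x)u(s,0)$ and its coordinates $U=(v_k)_k$, where $v_k=\int_0^t g_\rho(t-s,x)u(s,0)\tilde e_k(s)\dd s$. Each $v_k$ lies in $L^1$: since $\tilde e_k$ is bounded and $u(s,0)\ge0$, we get
\[
\bE|v_k|\le \|\tilde e_k\|_\infty\int_0^t g_\rho(t-s,x)\,\bE u(s,0)\,\dd s,
\]
and $\bE u(s,0)=\lim_n \bE u_n(s,0)=1$ by $L^1$ convergence (or, if convergence is only known a.e., by Fatou's lemma, which gives $\bE u(s,0)\le 1$).

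Next, by conditional Fubini,
\[
H_n:=\bE[\mathcal P_nU|\cF_n]=\Big(\int_0^t g_\rho(t-s,x)\,\bE[u(s,0)|\cF_n]\,\tilde e_k(s)\,\dd s\Big)_{k\le n}.
\]
Here $\bE[u(s,0)|\cF_n]=u_n(s,0)$ for a.e.\ $s$, using the a.e.\ hypothesis. Therefore, by Proposition~\ref{prop:exis-uniq-u-n},
\[
S_n:=\sint H_n\xi=\sintn \1_{[0,t]}g_\rho(t-\cdot,x)u_n(\cdot,0)\xi=u_n(t,x)-1 .
\]
This sequence $S_n$ is uniformly integrable because $u_n(t,x)$ is. Proposition~\ref{uniformly-Skorohod} then shows that $U$ is $L^1$-Skorohod integrable and that $S_n\to\ssint U\xi$ in $L^1$. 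Since also $S_n\to u(t,x)-1$ in $L^1$, equation \eqref{e:int-eq} holds.

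Adaptedness of $u(t,\cdot)$ to the noise filtration is inherited from the $u_n$: $M_n(t)$ depends only on $\xi$ through $m_k(t)$, which involves $\tilde e_k$ on $[0,t]$. A cleaner way to phrase this is through the support of the integrand, and I would treat this point as a routine remark.

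\emph{Uniqueness.} Let $w$ be any mild Skorohod solution. By Proposition~\ref{prop:finit n}, $\bE[w(t,x)|\cF_n]$ solves \eqref{e:eq-u_n}. By the uniqueness part of Proposition~\ref{prop:exis-uniq-u-n}, it therefore equals $u_n(t,x)$ $\bP$-a.s. for every $n$. So $\bE[w|\cF_n]=\bE[u|\cF_n]$ for all $n$. Since $\bigvee_n\cF_n$ is the $\sigma$-field generated by $\xi$ and $w,u$ are measurable with respect to it, Lévy's upward theorem gives $w=\lim_n\bE[w|\cF_n]=\lim_n\bE[u|\cF_n]=u$ a.s.

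\emph{Expected obstacle.} The delicate step is the justification of $H_n$. This requires, first, the measurability and joint integrability of $(s,\omega)\mapsto u(s,0)$ that license conditional Fubini. Second, it requires handling the a.e.-in-$(t,x)$ hypothesis, since $x=0$ is a single point and may lie in the exceptional set. I would address both by noting that $s\mapsto u_n(s,0)$ has $\bE u_n=1$, and by defining $u(s,0)$ as the a.s.\ martingale limit for every $s$. The identity $\bE[u(s,0)|\cF_n]=u_n(s,0)$ then needs $L^1$ convergence at $(s,0)$ for a.e.\ $s$. If the hypothesis does not provide this directly, I would extract it from the uniform integrability that Proposition~\ref{prop:Zn-Z} establishes for all $(t,x)$.
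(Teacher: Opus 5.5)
Your argument is correct and is essentially the paper's. You use uniform integrability of $u_n(t,x)$, then Proposition~\ref{uniformly-Skorohod} with $S_n=u_n(t,x)-1$ identified through Proposition~\ref{prop:exis-uniq-u-n}, and get uniqueness from Propositions~\ref{prop:finit n} and~\ref{prop:exis-uniq-u-n}. Your computation of $H_n$, the L\'evy upward step, and your observation that the a.e.\ hypothesis does not reach the line $x=0$ fill in details the paper leaves implicit. The paper's proof also uses that line tacitly, and Proposition~\ref{prop:Zn-Z} supplies it in the actual application.

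The one point to correct is your side remark on adaptedness, which the paper does not address at all. $u_n(t,x)$ depends on $\xi$ through $\xi_k=\xi(e_k)$, which involves the noise on all of $[0,T]$. So for a general basis $u_n(t,x)$ is not adapted to the noise up to time $t$, and adaptedness of the limit cannot simply be inherited from the $u_n$.
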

\begin{proof}
Note that $u_n$ is the unique solution to the projected equation \eqref{e:eq-u_n}. Suppose that the martingale $u_n$ converges in $L^1(\Xi)$. Then $u_n$ is uniformly integrable. By Proposition \ref{uniformly-Skorohod}, we have
$$\sint g_{\rho}(t-\cdot,x)u(\cdot,0)\mathbf{1}_{[0,t]}(\cdot)\xi=\lim_{n\to\infty}\sintn g_{\rho}(t-\cdot,x)u_n(\cdot,0)\mathbf{1}_{[0,t]}(\cdot)\xi.$$
Thus, we get
$$u(x,t)=\lim_{n\to\infty}u_n(x,t)=1+\sint g_{\rho}(t-\cdot,x)u(\cdot,0)\mathbf{1}_{[0,t]}(\cdot)\xi,$$
and hence $u(t,x)$ solves \eqref{e:she}. The uniqueness follows from Propositions \ref{prop:finit n} and \ref{prop:exis-uniq-u-n}.
\end{proof}
\subsection{Randomized shift and partition function}
In this subsection, we interpret the martingale $u_n(t,x)=Z_n(t,x)=Z_n$ as the partition function or total mass of a randomized shift (see Definition~\ref{def-Par}), and establish the criterion for its $L^1$-convergence.

Let $\mathcal{P}_n m$ denote the sequence $m$ where all coordinates beyond $n$ are set to zero. Note that in Proposition~\ref{prop:exis-uniq-u-n}, $Z_n$ is given by 
\begin{equation*} 
Z_n(m(t)):=Z_n(t,x)=\mathbf{E}_{(t,x)}\left[\exp \left\{\sum_{k=1}^n\left(m_{k}(t)\xi_k-\frac{1}{2}m_{k}(t)^2\right)\right\}\right].  
\end{equation*}
Recall that $\mathsf P_{(t,x)}=\fP_{(t,x)}\times\bP$ and $\mathsf E_{(t,x)}=\fE_{(t,x)}\times\bE$ are the joint probability and expectation on the product space $\mathfrak X\times\Xi$, respectively. It is straightforward to verify that $\bE[Z_n(m)F]=\mathsf{E}_{(t,x)}[F(\xi+ \mathcal{P}_nm)]$ for any bounded $F\in\mathcal{F}$, which is a Girsanov transformation, and hence by Definition~\ref{def-Par},  $Z_n$ is the partition function of the randomized shift $\mathcal{P}_n m$. For $A\in \mathcal G$, we denote  $\1_Am=(\1_Am_1, \1_A m_2, 
\dots)$ and denote
\begin{equation*} 
Z_n(\1_A m(t)):=\mathbf{E}_{(t,x)}\left[\exp \left\{\1_A\sum_{k=1}^n\left(m_{k}(t)\xi_k-\frac{1}{2}m_{k}(t)^2\right)\right\}\right].  
\end{equation*}

The following two results are taken from  Proposition~39 and Lemma~42 of \cite{qrv}, respectively. Note that we need an independent copy $m'$ of $m$, which can be obtained by replacing $X$ with an independent copy $X'$ in \eqref{e:mkt}.
\begin{proposition}\label{prop:Z_n-partition} 
Let $Z_n$ be given by \eqref{e:unt} and $\mathcal{P}_n$ be the projection to the first $n$ coordinates. Then for any $F \in \mathcal{F}$ with $\bE|Z_n F| < \infty$ or $F \geq 0$, we have that
\begin{equation*}
    \bE[Z_n F] = \mathsf{E}_{(t,x)} \left[F(\xi + \mathcal{P}_n m)\right].
\end{equation*}
In particular, the following results hold:
\begin{enumerate}
\item $\bE[Z_n]=1$;
\item $\bE[Z_{n}^2]=\fE_{(t,x)}^{\otimes2}[\exp\{\sum_{k=1}^n m_{k}m_{k}'\}]$;
\item For any $A\in\mathcal{G}$, $\bE|Z_n(m)-Z_n(\mathbf{1}_{A}m)|\leq 2\fP_{(t,x)}(A^c)$.
\end{enumerate}
\end{proposition}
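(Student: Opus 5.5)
The core of the argument is the change-of-measure identity; items (1)--(3) then follow from it by specific choices of $F$. Fix $\omega\in\mathfrak X$ and write $a=(m_1(t)(\omega),\dots,m_n(t)(\omega))\in\R^n$. For $F$ depending only on $\xi_1,\dots,\xi_n$, the one-dimensional Gaussian identity quoted in the introduction, $E[e^{aG-a^2/2}f(G)]=E[f(G+a)]$, applied coordinatewise to the i.i.d.\ standard Gaussians $\xi_1,\dots,\xi_n$, gives
\[
\bE\big[M_n(t)(\omega)F(\xi)\big]=\bE\big[F(\xi+\mathcal P_n m(\omega))\big].
\]
For a general $F\in\mathcal F$ depending on all of $\xi$, I would condition on $\xi_{n+1},\xi_{n+2},\dots$. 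These coordinates are independent of $(\xi_1,\dots,\xi_n)$, the shift $\mathcal P_n m$ does not touch them, and $M_n(t)(\omega)$ does not depend on them, so the identity persists. Integrating in $\omega$ against $\fP_{(t,x)}$ and using Tonelli's theorem for $F\ge0$ yields $\bE[Z_nF]=\mathsf E_{(t,x)}[F(\xi+\mathcal P_nm)]$, since $Z_n=\fE_{(t,x)}[M_n(t)]$ and $m$ is independent of $\xi$. For $F$ with $\bE|Z_nF|<\infty$, I would split $F=F^+-F^-$; both resulting pieces are finite, so Fubini's theorem applies. The measurability of $\omega\mapsto m_k(t)(\omega)$, which is needed for these exchanges, follows from \eqref{e:mkt}. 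This step requires only care, not any new idea.

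For the consequences, (1) is the case $F\equiv1$. For (2), I take $F=Z_n$, which is nonnegative, and obtain $\bE[Z_n^2]=\mathsf E_{(t,x)}[Z_n(\xi+\mathcal P_nm)]$. I then expand $Z_n(\xi+\mathcal P_n m)=\fE'_{(t,x)}\exp\{\sum_{k\le n}(m_k'(\xi_k+m_k)-\tfrac12 m_k'^2)\}$ using an independent copy $X'$. Taking $\bE$ first and applying $\bE e^{m_k'\xi_k}=e^{m_k'^2/2}$ leaves $\exp\{\sum_{k\le n}m_km_k'\}$, and Tonelli's theorem justifies every exchange of the order of integration.

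For (3), write $Z_n(m)-Z_n(\1_Am)=\fE_{(t,x)}[\1_{A^c}(M_n-1)]$, because on $A$ the two integrands coincide and on $A^c$ the second integrand equals $e^0=1$. Then $\bE|Z_n(m)-Z_n(\1_Am)|\le \fE_{(t,x)}\bE[\1_{A^c}(M_n+1)]=2\fP_{(t,x)}(A^c)$, using $\bE M_n(t)(\omega)=1$ for each fixed $\omega$. I do not expect any step to be a real obstacle; the only point needing attention is justifying the interchanges of $\bE$ and $\fE_{(t,x)}$, which positivity handles throughout.
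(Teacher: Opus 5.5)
Your proposal is correct and is essentially the paper's approach. The paper does not prove this proposition itself: it cites Proposition~39 and Lemma~42 of \cite{qrv} and calls the identity a straightforward Girsanov (Cameron--Martin) transformation, which is exactly your fixed-$\omega$ finite-dimensional shift followed by Tonelli/Fubini. Your pointwise bound $|Z_n(m)-Z_n(\mathbf 1_A m)|\le \fE_{(t,x)}[\mathbf 1_{A^c}(M_n+1)]$ then gives item (3) directly.
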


\begin{lemma}\label{ab-con}
Suppose that there exists a random variable $Z$ such that $Z_n \to Z$ in $L^1(\Xi, \mathcal{F}, \bP)$, or equivalently $\{Z_n\}_{n\in\bN}$ is uniformly integrable. Then
the marginal law of $(\xi+m)$ on $(\Xi,\mathcal{F})$, given by
$$\hat{\mathbb{P}}_{(t,x)}(A):=\mathsf P_{(t,x)}(\xi+m\in A),\quad  \text{for } A\in \mathcal F,$$
is absolutely continuous with respect to $\bP$. Moreover, $Z=\dd \hat{\mathbb{P}}/\dd \bP$ is the Radon–Nikodym derivative.
\end{lemma}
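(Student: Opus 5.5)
We prove Lemma~\ref{ab-con} by passing to the limit $n\to\infty$ in the Girsanov-type identity of Proposition~\ref{prop:Z_n-partition}, first for test functions depending on finitely many coordinates and then for all of $\mathcal F$.

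First, the equivalence in the hypothesis is standard. $\{Z_n\}$ is a nonnegative $(\mathcal F_n)$-martingale with $\bE[Z_n]=1$, so it converges almost surely. Hence $L^1$-convergence is equivalent to uniform integrability by Vitali's theorem. In that case $Z_n=\bE[Z\mid\mathcal F_n]$, $Z\ge0$ and $\bE Z=1$.

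The main step is to show that for every bounded $\mathcal F$-measurable $F$,
\[
\bE[ZF]=\mathsf E_{(t,x)}[F(\xi+m)].
\]
\begin{itemize}
\item Fix $N$ and let $F=f(\xi_1,\dots,\xi_N)$ with $f$ bounded and measurable. For $n\ge N$, Proposition~\ref{prop:Z_n-partition} gives
\[
\bE[Z_nF]=\mathsf E_{(t,x)}[f(\xi_1+m_1,\dots,\xi_N+m_N)],
\]
since $\mathcal P_nm$ and $m$ agree in the first $N$ coordinates.
\item The right-hand side does not depend on $n$, and $\bE[Z_nF]\to\bE[ZF]$ by $L^1$-convergence. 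So the identity holds for all such $F$.
\item Cylinder functions generate $\mathcal F=\sigma(\xi_k,k\in\bN)$. A monotone class argument therefore extends the identity to all bounded measurable $F$. Both sides are bounded linear functionals, continuous under bounded monotone limits: use monotone convergence on the left and on the right under $\mathsf P_{(t,x)}$.
\end{itemize}

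Taking $F=\1_A$ gives $\hat{\mathbb P}_{(t,x)}(A)=\bE[Z\1_A]$ for all $A\in\mathcal F$. Thus $\hat{\mathbb P}_{(t,x)}\ll\bP$ with density $Z$, and the density is unique $\bP$-a.s. by the Radon--Nikodym theorem.

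One point needs care: the meaning of $\xi+m$ as an $\mathcal F$-valued element. We interpret it coordinatewise in $\R^{\bN}$, $(\xi_k+m_k)_{k}$, which is measurable on $\mathfrak X\times\Xi$. Here $\mathcal F$ is identified with the product $\sigma$-field generated by the coordinates. With this reading, only the first step uses anything beyond measure theory. The main (mild) obstacle is justifying the exchange of limits, which $L^1$-convergence handles directly.
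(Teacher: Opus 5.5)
Your proof is correct. You pass to the limit in the Girsanov identity of Proposition~\ref{prop:Z_n-partition} for functionals of finitely many coordinates (where the right-hand side is already independent of $n\ge N$), and then extend to all of $\mathcal F$ by a monotone class argument; reading $\xi+m$ coordinatewise in $\R^{\bN}$ matches the paper's convention in Definition~\ref{def-Par}. The paper gives no proof of its own and simply imports the statement from Lemma~42 of \cite{qrv}, so your argument is a self-contained proof of that cited result.
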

 Next, we define the quantity
\begin{align}\label{e:mu-exp}
\mathfrak{e}(m):=\lim_{n\to\infty}\bE\big[Z_{n}^2\big]=\lim_{n\to\infty}\fE_{(t,x)}^{\otimes2}\left[\exp\sum_{k=1}^n m_{k}m_{k}'\right],
\end{align}
which plays a key role in establishing the $L^1$-convergence of the martingale $\{Z_n\}_{n\in\bN}$. The following result is standard for the $L^2$-convergence of a martingale.
\begin{proposition}\label{prop:Z-convege-L2}
If $\mathfrak{e}(m)<\infty$, then there exists a random variable $Z$, such that as $n\rightarrow\infty$, $Z_n\rightarrow Z$ almost surely and in $L^2(\Xi, \mathcal{F},\bP )$. Furthermore, $\E[Z^2]=\mathfrak{e}(m)$.
\end{proposition}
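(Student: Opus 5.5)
The proof is a standard $L^2$-bounded martingale argument, and I will carry it out in the following order.

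First, I will record that $\{Z_n\}_{n\in\bN}$ is a martingale with respect to $\{\cF_n\}$. This holds because $\xi_{n+1}$ is independent of $\cF_n$. As a result,
\[
\bE\big[e^{m_{n+1}\xi_{n+1}-\frac12 m_{n+1}^2}\,\big|\,\cF_n\vee\mathcal G\big]=1,
\]
and Fubini's theorem (everything is nonnegative) gives $\bE[Z_{n+1}|\cF_n]=Z_n$.

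Second, I will check that $\bE[Z_n^2]$ is nondecreasing in $n$. There are two ways to see this. One is conditional Jensen, which gives $\bE[Z_n^2]\le\bE[Z_{n+1}^2]$. The other is direct: $m_km_k'\ge0$ is not guaranteed, but monotonicity of second moments of a martingale holds regardless. So the limit $\mathfrak e(m)$ in \eqref{e:mu-exp} exists in $[1,\infty]$, and Proposition~\ref{prop:Z_n-partition}(2) identifies it with the limit of $\fE^{\otimes2}_{(t,x)}[\exp\sum_{k\le n}m_km_k']$.

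Third, assuming $\mathfrak e(m)<\infty$, I will apply the martingale convergence theorem to the martingale $\{Z_n\}$, which is bounded in $L^2$ since $\sup_n\bE[Z_n^2]=\mathfrak e(m)$. Concretely, the increments $Z_{k}-Z_{k-1}$ are orthogonal in $L^2(\bP)$. Hence
\[
\bE\big[(Z_n-Z_l)^2\big]=\bE[Z_n^2]-\bE[Z_l^2]\to0 \quad\text{as } l,n\to\infty,
\]
so $Z_n$ is Cauchy in $L^2$ and converges in $L^2$ to some $Z$. Almost sure convergence follows either from Doob's $L^2$ martingale convergence theorem or from the nonnegative martingale convergence already noted, with the two limits agreeing a.s.

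Finally, $L^2$ convergence gives $\bE[Z^2]=\lim_n\bE[Z_n^2]=\mathfrak e(m)$. No step is a genuine obstacle. The only point needing care is justifying the Fubini interchange in the martingale property, and this is harmless because all integrands are nonnegative.
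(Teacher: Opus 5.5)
Your proposal is correct and follows the same route as the paper, which observes that $Z_n^2$ is a submartingale (so $\sup_n\E[Z_n^2]=\mathfrak e(m)$) and then invokes the $L^2$-martingale convergence theorem. You unpack that theorem via orthogonal increments and the agreement of the a.s.\ and $L^2$ limits, and you make explicit two things the paper leaves implicit: the martingale property of $Z_n$ and the identity $\E[Z^2]=\lim_n\E[Z_n^2]$.
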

\begin{proof}
Note that $Z_n^2$ is a sub-martingale. Hence, $\sup_n\bE[Z_n^2]=\mathfrak{e}(m)<+\infty$. The desired result then follows by the $L^2$-martingale convergence theorem.
\end{proof}

For an event $A\in\mathcal{G}$, we denote
\begin{align}\label{e:mu-exp-A}
\mathfrak{e}(
\mathbf{1}_Am):=\lim\limits_{n\to\infty}\bE\big[Z_n(\mathbf{1}_Am)^2\big]=\lim_{n\to\infty}\fE_{(t,x)}^{\otimes2}\left[\exp\sum_{k=1}^n(\mathbf{1}_Am_{k})(\mathbf{1}_{A'}m_{k}')\right],
\end{align}
where $A'$ is an independent copy of $A$. We have the following criterion for the $L^1$-convergence of $Z_n$.

\begin{proposition}\label{prop:Z-convege}
If for any $p\in(0,1)$, there exists a set $A\in\mathcal{G}$ with $Q(A)>p$, such that $\mathfrak{e}(\mathbf{1}_A m)<\infty$, then as $n\rightarrow\infty$ $Z_n\rightarrow Z$ almost surely and in $L^1(\Xi, \mathcal{F},\bP)$.
\end{proposition}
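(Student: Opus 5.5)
Here $Q$ is read as $\fP_{(t,x)}$, the law of the randomized shift. The plan is to show that $\{Z_n\}_{n\in\bN}$ is uniformly integrable. Since $Z_n$ is a nonnegative martingale, it then converges almost surely and in $L^1(\Xi,\mathcal F,\bP)$. The tool is the truncation bound in Proposition~\ref{prop:Z_n-partition}(3), which lets us replace $Z_n(m)$ by $Z_n(\1_A m)$ at an $L^1$-cost of $2\fP_{(t,x)}(A^c)$. We then use $L^2$-boundedness of the truncated martingale, which holds by the hypothesis $\mathfrak e(\1_A m)<\infty$.

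Fix $\eta>0$ and choose $A\in\mathcal G$ with $\fP_{(t,x)}(A)>1-\eta/4$ and $\mathfrak e(\1_A m)<\infty$. For every $n$:
\begin{enumerate}
\item $Z_n(\1_A m)$ is itself a nonnegative $\cF_n$-martingale, since the shift $\1_A m$ is still a randomized shift independent of $\xi$.
\item By the submartingale argument of Proposition~\ref{prop:Z-convege-L2}, $\sup_n \bE[Z_n(\1_A m)^2]=\mathfrak e(\1_A m)<\infty$.
\end{enumerate}
Thus $\{Z_n(\1_A m)\}_n$ is uniformly integrable. For any event $B\in\mathcal F$ we then have
\begin{equation*}
\bE[Z_n(m)\1_B]\le \bE|Z_n(m)-Z_n(\1_A m)| + \bE[Z_n(\1_A m)\1_B]\le \frac{\eta}{2} + \mathfrak e(\1_A m)^{1/2}\,\bP(B)^{1/2},
\end{equation*}
using Proposition~\ref{prop:Z_n-partition}(3) for the first term and Cauchy--Schwarz for the second.

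Since $\bE[Z_n(m)]=1$ by Proposition~\ref{prop:Z_n-partition}(1), the family is bounded in $L^1$. Taking $\bP(B)<\eta^2/(4\mathfrak e(\1_A m))$ makes $\sup_n\bE[Z_n\1_B]<\eta$, which is the $\e$–$\delta$ characterization of uniform integrability. Because $\eta$ is arbitrary, $\{Z_n\}$ is uniformly integrable. Martingale convergence then gives $Z_n\to Z$ almost surely, and uniform integrability upgrades this to $L^1$ convergence.

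The main point needing care is the identification $\mathfrak e(\1_A m)=\sup_n\bE[Z_n(\1_A m)^2]$, i.e.\ that the second moment of the truncated partition function equals $\fE^{\otimes 2}_{(t,x)}[\exp\sum_{k\le n}\1_A m_k\1_{A'}m_k']$. This follows from the same Girsanov computation as Proposition~\ref{prop:Z_n-partition}(2) applied to the shift $\1_Am$, together with the monotonicity in $n$ coming from the submartingale property. An alternative route is to avoid the $L^2$ step: by the triangle inequality $\bE|Z_n-Z_{n'}|\le \eta+\bE|Z_n(\1_Am)-Z_{n'}(\1_Am)|$, and the last term tends to zero because $Z_n(\1_Am)$ converges in $L^2$. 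This shows directly that $\{Z_n\}$ is Cauchy in $L^1$.
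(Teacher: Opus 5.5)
Your proposal is correct and uses the same two ingredients as the paper's proof: the truncation bound $\bE|Z_n(m)-Z_n(\1_A m)|\le 2\fP_{(t,x)}(A^c)$ from Proposition~\ref{prop:Z_n-partition}(3), and the $L^2$-convergence of the truncated martingale $Z_n(\1_A m)$. The paper runs your ``alternative route'', showing $\bE|Z_k(m)-Z_n(m)|\le 4(1-p)+\|Z_k(\1_Am)-Z_n(\1_Am)\|_{L^2(\bP)}$ so that $\{Z_n\}$ is Cauchy in $L^1$; your main $\e$--$\delta$ uniform-integrability argument is an equivalent repackaging of that estimate, and your reading of $Q$ as $\fP_{(t,x)}$ is the intended one.
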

\begin{proof}
The almost sure convergence immediately follows from the fact that $Z_n$ is a non-negative martingale. We then turn to the $L^1$-convergence.

The condition $\mathfrak{e}(\mathbf{1}_A m)<\infty$ ensures that $Z_n(\mathbf{1}_{A}m)$ is an $L^2$-bounded martingale, which converges in $L^2$. By Proposition \ref{prop:Z_n-partition}, we have that for any $n\in\bN$, $\bE|Z_n(m)-Z_n(\mathbf{1}_{A}m)|\leq 2(1-p)$. Then by the triangle inequality and Jensen's inequality, it follows that for any $k,n\in\bN$,
\begin{equation*}
\begin{split}
\bE|Z_k(m)-Z_n(m)|\leq&\bE|Z_k(m)-Z_k(\mathbf{1}_Am)|+\bE|Z_k(\mathbf{1}_Am)-Z_n(\mathbf{1}_Am)|+\bE|Z_n(\mathbf{1}_Am)-Z_n(m)|\\
\leq&4(1-p)+\|Z_k(\mathbf{1}_Am)-Z_n(\mathbf{1}_Am)\|_{L^2(\bP)},
\end{split}
\end{equation*}
which shows that $Z_n(m)$ is a Cauchy sequence in $L^1(\bP)$ and the proof is completed.
\end{proof}
Note that if $Z_n$ converges to $Z$ in $L^1(\bP)$, then $Z$ solves  the SHE~\eqref{e:she} by Proposition~\ref{prop:un-u}. In the following  subsections, we apply Proposition~\ref{prop:Z-convege} to establish the $L^1$-convergence of $Z_n$.



\subsection{Mutual energy of local times}\label{sec:Mut-e} 
In view of \eqref{e:mu-exp} and  \eqref{e:mu-exp-A}, a key ingredient is the quantity 
\begin{equation}\label{eq:alpha_n}
\mE_n(m(t),m'(t)) := \sum_{i=1}^n m_i(t) m_i'(t).
\end{equation}

For the sake of notational simplicity, we henceforth write $m$ for $m(t)$.
We may view $m$ as a random function on $\mathbb{N}$, defined by $m(i) = m_i$. The tensor product of $m$ on $\bN^k$ is denoted by $m^{\otimes k}$. Let
\begin{equation}\label{e:rho-km}
\rho_{k,m}(\mathbf i) := \fE_{(t,x)}\big[m^{\otimes k}(\mathbf i)\big],\quad\text{for}~\mathbf i=(i_1,\dots,i_k)\in\mathbb N^k.
\end{equation}
Then, $\rho_{k,m}$ is a deterministic function on $\mathbb N^k$, and we have 
\begin{equation}\label{al-rho}
\begin{aligned}
\fE_{(t,x)}^{\otimes2}\big[\mE_n(m,m')^k\big]&=\fE_{(t,x)}^{\otimes2}\left[\sum_{\mathbf{i}\in \{1,\dots,n\}^{k}}\prod_{j=1}^k m_{ i_j} m'_{ i_j}\right]\\ &=\sum_{\mathbf{i}\in \{1,\dots,n\}^{k}}\fE_{(t,x)}\left[\prod_{j=1}^km_{i_j}\right]\fE_{(t,x)}'\left[\prod_{j=1}^km'_{i_j}\right]=\big\Vert\mathbf{1}_{\{1,\dots,n\}^k}\rho_{k,m}\big\Vert_{l^2(\bN^k)}^2,
\end{aligned}
\end{equation}
where $l^2(\mathbb N^k)$ is the Hilbert space of square-summable  sequences with indices in $\bN^k$.

Recall $L_s$ and $\nu_L$ from \eqref{e:local-time} and \eqref{e:nu-L}, respectively.  We next establish the connections among $\mE_n$, $L_s$ and $\nu_L$. For i.i.d.\ local times $L$ and $L'$, define
\begin{equation}\label{e:al*}
\mE_*(\nu_L,\nu'_L):=\int_0^t\int_0^t|r-s|^{2H-2}\dd L_r\dd L'_s,
\end{equation}
which is a nonnegative random variable on $\mathfrak{X}^{\otimes2}$. 

In this subsection,  we aim to show that if $\mE_*(\nu_L,\nu'_L)<+\infty$, $\fP$-a.s., then $$\mE_*(\nu_L,\nu'_L)=\lim_{n\to\infty}\mE_n(m,m'):=\mE(m,m'),\quad \mathbf P \text{-a.s.},$$ 
which is called the \emph{mutual energy} of $L$ and $L'$ (see Definition~\ref{def:al} and Proposition~\ref{prop:al-mut-e} below). The analysis of this quantity is a crucial step in proving the $L^1$-convergence of $u_n(t,x)=Z_n(t,x)$.

Define a deterministic measure on $[0,t]$ by
\begin{align*}
\mu_L(A):=\mathbf{E}_{(t,x)}\left[\int_A \dd L_s\right]=\mathbf E_{(t,x)}\big[\nu_L(A)\big], 
\quad\text{for}~A\in \mathcal B([0,t]).
\end{align*}
Note that
\begin{equation*}
\frac{\mu_L(\dd s)}{\dd s}=\lim\limits_{\e\to0}\fE_{(t,x)}[\delta_\e(X_s)]=g(t-s,x)=:\rho_L(s),
\end{equation*}
which is the density of  $\mu_L$ with respect to the Lebesgue measure. By Corollary~\ref{cor:XX'},  We have that $\rho_L\in \mathcal{H}$ for $H>\frac{1}{\rho}$ if $x=0$ and for $H\in(\frac12,1)$ if $x\neq0$.

The above definition can be extended to the product space $[0,t]^k$ for $k\in\bN$, providing a connection to \eqref{e:rho-km}. For $A\in\mathcal B([0,t]^k)$, we have  that by Lemma \ref{lem:L-property},
\begin{equation*}
\begin{aligned}
\mathbf{E}_{(t,x)}\left[\nu_L^{\otimes k}(A)\right]&=\mathbf{E}_{(t,x)}\left[\int_{\boldsymbol{s
}\in [0,t]^k}\mathbf{1}_A(s_1,\cdots,s_k)\dd L_{s_1}\cdots\dd L_{s_k}\right]=\int_{\boldsymbol{s}\in [0,t]^k}\mathbf{1}_A(s_1,\cdots,s_k)\rho_{k,L}(\boldsymbol{s})\dd \boldsymbol{s},
\end{aligned}   
\end{equation*}
where 
\begin{equation}\label{e:rho-kL}
\rho_{k,L}(\boldsymbol{s}):=k!f_k(t,x;\boldsymbol{s})
\end{equation}
with $f_k$ given by \eqref{e:fn}, denoting the density of $\mathbf E_{(t,x)}[\nu_L^{\otimes k}]$. Meanwhile, recall $\rho_{k,m}$ from \eqref{e:rho-km}, and we obtain that
\begin{align*}
\rho_{k,m}(\mathbf i)&=\mathbf{E}_{(t,x)}\big[m^{\otimes k}(\mathbf i)\big]=\mathbf{E}_{(t,x)}\left[\int_{[0,t]^k} \tilde{e}_{i_1}\otimes\cdots\otimes \tilde{e}_{i_k}\dd\nu_L^{\otimes k}\right]=\int_{[0,t]^k}\prod_{j=1}^k\tilde{e}_{i_j}(s_j)\rho_{k,L}(\boldsymbol{s})\dd \boldsymbol{s},  
\end{align*}
which implies the following isometry
\begin{equation}\label{e:equal-inner}
\|\rho_{k,m}\|^2_{l^2(\bN^k)}=\|\rho_{k,L}\|^2_{\cH^{\otimes k}}.
\end{equation}

Similarly, for a bounded random variable $G$ on $\mathfrak{X}$ with $G\geq0$, we can define a deterministic measure
\begin{equation}\label{e:mu-GL}
\mu_{GL}(A):=\mathbf{E}_{(t,x)}\left[\int _{A}G\dd L_s\right], 
\quad\text{for}~A\in \mathcal B([0,t]).
\end{equation}
Clearly, $\mu_{GL}$ is absolutely continuous with respect to the Lebesgue measure, and its Radon-Nikodym derivative is denoted by
\begin{equation}\label{eq:rho_GL}
\rho_{GL}(s):=\frac{\dd}{\dd s}\mathbf{E}_{(t,x)}\left[\int_0^s G\dd L_r\right].
\end{equation}
Moreover,  under the same conditions for $\rho_L\in \mathcal H$, we have that $\rho_{GL}\in \mathcal{H}$, since $\Vert \rho_{GL}\Vert_{\mathcal{H}}\leq C\Vert\rho_{L}\Vert_{\mathcal{H}}$ by the boundedness of $G$. Equivalently,   $\rho_{GL}$ can be characterized by 
\begin{equation}\label{rho-FL}
\mathbf{E}_{(t,x)}\left[\int_0^t f(s) G\dd L_s\right]=\int_0^t f(s)\rho_{GL}(s)\dd s, \quad\text{for all bounded measurable functions $f$ on $\bR$}.
\end{equation}

With the above notations, we define the \emph{mutual energy} as follows.
\begin{definition}[Mutual energy]\label{def:al}
A random variable $\mE(\nu_L,\nu'_L)\in L^1(\mathfrak{X}^{\otimes 2})$ is called the \emph{mutual energy} of the local times $L$ and $L'$, if for any bounded non-negative random variables $F$ and $G$ on $\mathfrak{X}$, we have that
\begin{equation}\label{al}
\mathbf{E}_{(t,x)}^{\otimes2}\big[FG'\mE(\nu_L,\nu'_L)\big]
=\langle\rho_{FL}, \rho_{GL}\rangle_{\mathcal{H}}< \infty.
\end{equation}
\end{definition}

Recall the expressions for $m_k$ from \eqref{e:mkt} and $\tilde{e}_k$ therein from \eqref{def:tilde_e}. Then $\{m_k\}_{k\in\bN}$ is the projection of $\nu_L$ on the orthonormal basis $\{e_k\}_{k\in\bN}$ of $\cH$. We can view the random function $m=(m_1,m_2,\cdots): \bN\to\bR$ as the coordinates of the random measure $\nu_L$ with respect to $\{e_k\}_{k\in\bN}$. Define $\nu_{L,n}:=\sum_{k=1}^n m_k e_k$ and 
\begin{equation}\label{e:alpha-n}
\mE_{n}(\nu_L,\nu_L'):=\langle \nu_{L,n},\nu'_{L,n}\rangle_\mathcal{H}=\sum_{k=1}^n m_{k}m_{k}'=\mE_n(m,m'),
\end{equation}
which can be viewed as the truncations of $\nu_L$ and $\mE(\nu_L,\nu'_L)$ to the first $n$ dimensions, respectively. In the following proposition, we show that $\mE_n(\nu_L,\nu'_L)$ is indeed an approximation of $\mE(\nu_L,\nu'_L)$.

\begin{proposition}\label{al-converges}
For $k_0\in 2\bN$ and $\rho_{k_0,L}$ defined in \eqref{e:rho-kL}, if $\rho_{k_0,L}\in \mathcal{H}^{\otimes k_0}$, then for all integers $1\le k\le k_0$,
\begin{enumerate}

\item  As $n\to\infty$, $\mE_n(\nu_L,\nu_L')$ converges in $L^k(\mathbf{P}_{(t,x)}^{\otimes 2})$, and the limit is the mutual energy $\mE(\nu_L, \nu_L')$ defined in Definition~\ref{def:al};

\item $\Vert\mE(\nu_L,\nu_L')\Vert^k_{L^k(\mathbf{P}_{(t,x)}^{\otimes2})}=\Vert\rho_{k,L}\Vert^2_{\mathcal{H}^{\otimes k}}.$ 
\end{enumerate}
\end{proposition}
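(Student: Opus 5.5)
The plan mirrors the analogous statement in \cite{qrv}: we compute the $L^k$-norms of the truncations $\mE_n(\nu_L,\nu_L')$ explicitly, show they form a Cauchy sequence, and identify the limit through the duality relation \eqref{al}.

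\textbf{Step 1: Cauchy property in $L^{k_0}$.} Since $k_0$ is even, for $n<n'$ we write
\[
\mE_{n'}-\mE_n=\sum_{i=n+1}^{n'}m_im_i'.
\]
Expanding the $k_0$-th power as in \eqref{al-rho}, we get
\[
\fE^{\otimes2}_{(t,x)}\big[(\mE_{n'}-\mE_n)^{k_0}\big]=\big\|\mathbf 1_{\{n+1,\dots,n'\}^{k_0}}\rho_{k_0,m}\big\|^2_{l^2(\bN^{k_0})}.
\]
By the isometry \eqref{e:equal-inner}, $\|\rho_{k_0,m}\|_{l^2}=\|\rho_{k_0,L}\|_{\cH^{\otimes k_0}}<\infty$. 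Hence the tails of $\rho_{k_0,m}$ over any index region avoiding $\{1,\dots,n\}^{k_0}$ tend to zero by dominated convergence. So $\{\mE_n\}$ is Cauchy in $L^{k_0}(\fP^{\otimes2}_{(t,x)})$, and therefore in $L^k$ for every $k\le k_0$. Call the limit $\mE$.

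\textbf{Step 2: the norm identity.} Because $L^k$-convergence passes to norms, (2) follows from \eqref{al-rho} for even $k$:
\[
\|\mE\|_{L^k}^k=\lim_n\|\mathbf 1_{[n]^k}\rho_{k,m}\|^2_{l^2}=\|\rho_{k,m}\|^2_{l^2}=\|\rho_{k,L}\|^2_{\cH^{\otimes k}}.
\]
For odd $k$ I would first note that $\mE\ge0$ (to be checked below), and that the computation in \eqref{al-rho} never used evenness, only nonnegativity of the limit. We also need $\rho_{k,L}\in\cH^{\otimes k}$ for $k\le k_0$. I expect this to follow from $\rho_{k_0,L}\in\cH^{\otimes k_0}$ via Hölder in $L^{k}$ versus $L^{k_0}$ applied to the monomials $\fE[\mE_n^k]$. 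That is, $\|\mathbf 1_{[n]^k}\rho_{k,m}\|^2=\fE[\mE_n^k]\le \fE[|\mE_n|^{k_0}]^{k/k_0}$, which is uniformly bounded, so monotone convergence gives the claim.

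\textbf{Step 3: identification as the mutual energy.} Let $F,G$ be bounded and nonnegative on $\mathfrak X$. By independence and the definition of $m_i$ in \eqref{e:mkt},
\[
\fE^{\otimes2}[FG'\mE_n]=\sum_{i\le n}\fE[Fm_i]\,\fE[Gm_i'].
\]
Using \eqref{rho-FL} with $f=\tilde e_i$, we have $\fE[Fm_i]=\int\tilde e_i\rho_{FL}=\langle e_i,\rho_{FL}\rangle_\cH$, and similarly for $G$. Here $\rho_{FL},\rho_{GL}\in\cH$, because $\rho_L\in\cH$ is implied by the $k=1$ case of Step 2. Therefore
\[
\fE^{\otimes2}[FG'\mE_n]=\sum_{i\le n}\langle e_i,\rho_{FL}\rangle\langle e_i,\rho_{GL}\rangle\to\langle\rho_{FL},\rho_{GL}\rangle_\cH
\]
by Parseval. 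The left side converges to $\fE^{\otimes2}[FG'\mE]$ by $L^1$-convergence and boundedness of $FG'$. This gives \eqref{al}.

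\textbf{Main obstacle: nonnegativity of $\mE$.} I expect the delicate point to be justifying $\mE\ge0$, needed for odd $k$. The finite truncations $\mE_n$ can be negative. I would obtain nonnegativity from \eqref{al} itself. Products $FG'$ with $F,G$ indicators generate the product $\sigma$-field. Moreover, $\langle\rho_{FL},\rho_{GL}\rangle_\cH\ge0$ because the kernel $|r-s|^{2H-2}$ and the densities are nonnegative. So $\fE^{\otimes2}[\mathbf 1_C\mE]\ge0$ for all rectangles $C$, and a monotone class argument extends this to all measurable sets, giving $\mE\ge0$ a.s.
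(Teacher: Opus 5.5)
Your proposal is correct and follows the paper's proof essentially step for step. It obtains the Cauchy property from the vanishing $\ell^2$-mass of $\rho_{k_0,m}$ off $\{1,\dots,n\}^{k_0}$, the norm identity via \eqref{al-rho} and \eqref{e:equal-inner}, and the identification through $\rho_{FL},\rho_{GL}$ and Parseval. Your extra points fill in what the paper leaves implicit: nonnegativity of $\mE(\nu_L,\nu_L')$ from \eqref{al} plus a monotone class argument, which is needed for odd $k$, and the H\"older step for $k<k_0$, which includes $\rho_L\in\cH$. One refinement: apply H\"older to the basis-free nonnegative quantity $\mE_*(\nu_L,\nu_L')=\int_0^t\int_0^t|r-s|^{2H-2}\dd L_r\dd L'_s$, whose $k$-th moment equals $\|\rho_{k,L}\|^2_{\cH^{\otimes k}}$ by Tonelli and \eqref{3}. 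Then \eqref{e:equal-inner} is only used once $\rho_{k,L}\in\cH^{\otimes k}$ is known, where it is plain Parseval.
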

\begin{proof}
Recall \eqref{e:alpha-n}, and we have that $\mE_n(\nu_L,\nu'_L)=\mE_n(m,m')$. By \eqref{al-rho} and \eqref{e:equal-inner}, and the assumption $\rho_{k_0,L}\in\cH^{\otimes k_0}$, it follows that
$$\lim_{n\to\infty}\Vert\mE_n(m,m')\Vert^{k_0}_{L^{k_0}({\mathbf{P}}_{(t,x)}^{\otimes2})}=\lim_{n\to\infty}\big\Vert\mathbf{1}_{\{1,\dots,n\}^{k_0}}\rho_{k_0,m}\big\Vert_{l^2(\bN^{k_0})}^2=\Vert\rho_{k_0,m}\Vert^2_{l^2(\bN^{k_0})}=\Vert\rho_{k_0,L}\Vert^2_{\cH^{\otimes k_0}}<\infty.$$
Hence, part (2) is a direct consequence of part (1). Then we are devoted to proving part (1). We first show that as $n\to\infty$, $\mE_n(\nu_L,\nu_L')$ converges to a limit $\mE_\infty(\nu_L,\nu'_L)$ in $L^k(\mathbf{P}_{(t,x)}^{\otimes 2})$, and then verify that $\mE_\infty(\nu_L,\nu'_L)$ satisfies Definition \ref{def:al}.

Denote $\zeta(A):=\sum_{z\in A}\rho_{k_0,m}(z)^2$ for $A\subset \mathbb N^k$, which is a  measure on $\bN^k$.  Then for $n\ge l$, we have
\begin{equation*}
\begin{aligned}
\mathbf{E}_{(t,x)}^{\otimes2}\left[\mE_n(m,m')^{k_0}\right]=&\zeta\left(\{1,
\dots,n\}^{k_0}\right)\ge\zeta\left(\{1,
\dots,l\}^{k_0}\right)+\zeta\left(\{l+1,
\dots,n\}^{k_0}\right)\\
=&\mathbf{E}_{(t,x)}^{\otimes2}\left[\mE_l(m,m')^{k_0}\right]+\mathbf{E}_{(t,x)}^{\otimes2}\left[\big(\mE_n(m,m')-\mE_l(m,m')\big)^{k_0}\right].
\end{aligned}
\end{equation*}
Therefore, $\mE_n(\nu_L,\nu'_L)=\mE_n(m,m')$ is a Cauchy sequence in $L^{k}(\mathbf{P}_{(t,x)}^{\otimes2})$, for $1\leq k\leq k_0$.
We denote its limit by  $\mE_{\infty}(\nu_L,\nu_L')$.

To verify that $\mE_\infty(\nu_L,\nu'_L)$ is the mutual energy,  let $F, G\geq0$ be bounded random variables on $\mathfrak{X}$. Recalling \eqref{e:alpha-n} and \eqref{eq:rho_GL}, we have 
\begin{align*}
\mathbf{E}_{(t,x)}^{\otimes2}\big[FG'\mE_n(\nu_L,\nu_L')\big]&=\sum_{i=1}^n\mathbf{E}_{(t,x)}\left[Fm_i\right]\mathbf{E}_{(t,x)}\left[Gm_i\right]
\\&=\sum_{i=1}^n\mathbf{E}_{(t,x)}\left[\int _0^t F\tilde{e}_i(s) \dd L_s\right]\mathbf{E}_{(t,x)}\left[\int _0^t G\tilde{e}_i(s) \dd L_s\right]
\\&=\sum_{i=1}^n \langle\rho_{FL},\tilde{e}_i\rangle_{L^2([0,t])}\langle\rho_{GL},\tilde{e}_i\rangle_{L^2([0,t])}
\\&=\sum_{i=1}^n \langle\rho_{FL},e_i\rangle_{\mathcal{H}}\langle\rho_{GL},e_i\rangle_{\mathcal{H}},
\end{align*}
where the third equality follows from \eqref{rho-FL}.

By the boundedness of $F,G$, $\rho_{FL},\rho_{GL}\in\mathcal{H}$ and $FG'\mE_n(\nu_L,\nu_L')\to FG'\mE_{\infty}(\nu_L,\nu_L')$ in $L^{k}(\mathbf{P}_{(t,x)}^{\otimes2})$. Let $n\to\infty$, and we get
$$\mathbf{E}_{(t,x)}^{\otimes2}[FG'\mE_\infty(\nu_L,\nu_L')]=\langle\rho_{FL},\rho_{GL}\rangle_{\mathcal{H}},$$
which verifies that $\mE_{\infty}(\nu_L,\nu'_L)=\mE(\nu_L,\nu'_L)$ by Definition \ref{def:al}.
\end{proof}

Recall that $\rho=\frac{1}{1-\alpha}$, where $\alpha$ is the exponent of the renewal process introduced in \eqref{eq:pin_dist}. Since the Weinrib-Halperin prediction can be characterized by $\alpha$ more explicitly, from now on, we state our results with conditions on  $\rho$ and $\alpha$ simultaneously. To proceed, we need to check that the condition $\rho_{k,L}\in\cH^{\otimes k}$ in Proposition \ref{al-converges} can be satisfied. We have the following lemma.
\begin{lemma}\label{lem:rhoKL-inH}
For $\rho\in(\frac{1}{H},2]\Longleftrightarrow \alpha\in(1-H, \frac12]$, $\rho_{k,L}\in \mathcal{H}^{\otimes k}$ for all $k\geq 1$.
\end{lemma}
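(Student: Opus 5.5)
Recall $\rho_{k,L}(\boldsymbol s)=k!f_k(t,x;\boldsymbol s)$, so we must show $\|f_k(t,x)\|_{\mathcal H^{\otimes k}}<\infty$ for each fixed $k$. This is a single-chaos version of the bound \eqref{e:2-moment-leq}, and for fixed $k$ we do not need to track constants in $k$. The plan is as follows. First apply Lemma~\ref{lem:1/H norm} (the embedding $L^{1/H}\hookrightarrow\mathcal H$ and its tensorized version) to reduce to showing $f_k(t,x)\in L^{1/H}([0,t]^k)$. Since $f_k$ is the symmetrization of the ordered kernel, it suffices to bound the $L^{1/H}$-norm of
\[
\1_{[0,t]^k_<}(\boldsymbol s)\,g_\rho(t-s_k,x)\prod_{i=1}^{k-1}g_\rho(s_{i+1}-s_i,0).
\]

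Next we insert the bounds of Lemma~\ref{lem:g-bound}: $g_\rho(r,0)=c\,r^{-1/\rho}$ and $g_\rho(r,x)\le C r^{-1/\rho}$ uniformly in $x$. The $1/H$-th power of the kernel is then bounded by
\[
C\prod_{i=1}^{k-1}(s_{i+1}-s_i)^{-\frac{1}{\rho H}}\,(t-s_k)^{-\frac1{\rho H}}
\]
on the simplex. Lemma~\ref{lem:a direct calculate} evaluates this iterated Dirichlet-type integral as a ratio of Gamma functions. The result is finite exactly when every exponent exceeds $-1$, that is, when $\frac{1}{\rho H}<1$, which is precisely $\rho>\frac1H$. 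For $x\ne0$ the endpoint factor is even better behaved, but the uniform bound already suffices. The equivalence $\rho\in(\frac1H,2]\Leftrightarrow\alpha\in(1-H,\frac12]$ follows directly from $\rho=\frac1{1-\alpha}$.

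The only delicate point is the exponent bookkeeping in Lemma~\ref{lem:1/H norm}. Its $k$-fold version gives $\|f\|_{\mathcal H^{\otimes k}}\le C^k\|f\|_{L^{1/H}([0,t]^k)}$, and we must make sure the integrability threshold is computed with the power $1/H$, not $1/(2H)$ as in the $\rho=2$ case of \eqref{e:2-moment-leq}. As a consistency check, for $k=1$ and $x=0$ this reproduces the condition $H>\frac1\rho$ of Corollary~\ref{cor:XX'}, which shows the threshold is sharp.
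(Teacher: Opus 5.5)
Your proposal is correct and follows essentially the same route as the paper. In both, $\rho_{k,L}=k!f_k$ is reduced to the ordered kernel, Lemma~\ref{lem:1/H norm} embeds $L^{1/H}$ into $\mathcal H^{\otimes k}$, Lemma~\ref{lem:g-bound} gives $g_\rho(r,x)\le Cr^{-1/\rho}$, and the resulting Dirichlet integral with exponents $-\frac{1}{\rho H}$ is finite exactly when $\rho>\frac1H$. (Your aside about $\frac{1}{2H}$ is a non-issue: the exponent $\frac{1}{2H}$ in \eqref{e:2-moment-leq} is just $\frac{1}{\rho H}$ at $\rho=2$, so the bookkeeping is identical.)
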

\begin{proof}
By \eqref{e:rho-kL}, we have that for $s_0:=0< s_1<\cdots<s_k<s_{k+1}:=t$,
\begin{equation*}
\begin{aligned}
\rho_{k,L}(\boldsymbol{s})
&=g_\rho (t-s_k,x)\prod_{i= 2}^{k}\frac{C_\rho}{(s_{i}-s_{i-1})^{\frac{1}{\rho  }}}.
\end{aligned}   
\end{equation*}
The desired result then follows by  applying Lemmas \ref{lem:1/H norm} and \ref{lem:g-bound}, that is,
\begin{equation*}
\begin{aligned}
\Vert\rho_{k,L}\Vert^2_{\mathcal{H}^{\otimes k}}\le C_H^k\Vert\rho_{k,L}\Vert^2_{L^{1/H} (\bR^k)}&\le C_H^k\left((k!)^{ 1-\frac{1}{H}}C^{k}\int_{s\in [0,t]^k_<} \prod_{i=1}^{k}\frac{1}{(s_{i+1}-s_{i})^{\frac{1}{\rho H }}}\dd s\right)^{2H}<\infty.
\end{aligned}   
\end{equation*}
\end{proof}

Next, we show that the mutual energy $\mE(\nu_L, \nu_L')$ of the local times $L$ and $L'$ defined in Definition~\ref{def:al} actually coincides with $\mE_*(\nu_L, \nu_L')$ in \eqref{e:al*},  which provides a more tractable and explicit formula for $\mE(L,L')$. 
\begin{proposition}\label{prop:al-mut-e}
For $\rho\in(\frac{1}{H},2]\Longleftrightarrow \alpha\in(1-H, \frac12]$,
 $\mE(\nu_L,\nu'_L)$ defined in Definition~\ref{def:al} exists finitely and equals to $\mE_\ast(\nu_L, \nu'_L)$ defined in~\eqref{e:al*}, $\fP_{(t,x)}$-almost surely.
\end{proposition}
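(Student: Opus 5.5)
By Lemma~\ref{lem:rhoKL-inH}, for $\rho\in(\frac1H,2]$ we have $\rho_{k,L}\in\cH^{\otimes k}$ for every $k$. In particular this holds for $k_0=2$, so Proposition~\ref{al-converges} applies. Hence $\mE_n(\nu_L,\nu'_L)\to\mE(\nu_L,\nu'_L)$ in $L^2(\fP^{\otimes2}_{(t,x)})$, the limit is finite a.s., and it satisfies the defining identity \eqref{al}. It therefore remains to show $\mE=\mE_*$ a.s. Since both are measurable on $\mathfrak X^{\otimes 2}$, it suffices to show that $\mE_*$ also satisfies \eqref{al}, i.e.
\[
\fE^{\otimes2}_{(t,x)}\big[FG'\mE_*(\nu_L,\nu'_L)\big]=\langle\rho_{FL},\rho_{GL}\rangle_{\cH}
\]
for all bounded nonnegative $F,G$. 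Indeed, taking $F=\1_A$ and $G=\1_B$ and using a monotone class argument, the two random variables $\mE$ and $\mE_*$ then have equal integrals over all product rectangles $A\times B$. Both are nonnegative and integrable (for $\mE_*$, integrability follows from the identity with $F=G=1$ together with $\rho_L\in\cH$), so they agree a.s.

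The identity for $\mE_*$ comes from independence of $L$ and $L'$ plus Fubini/Tonelli, since everything is nonnegative:
\[
\fE^{\otimes2}\Big[\int_0^t\!\!\int_0^t |r-s|^{2H-2}F\,\dd L_r\,G'\dd L'_s\Big]=\int_0^t\!\!\int_0^t|r-s|^{2H-2}\,\mu_{FL}(\dd r)\,\mu_{GL}(\dd s).
\]
To justify this, I will approximate the kernel by bounded continuous kernels, $k_N(r,s)=|r-s|^{2H-2}\wedge N$. For a product-type continuous kernel, the expectation of the double integral factorizes via \eqref{rho-FL} into $\int\!\!\int k_N\,\rho_{FL}(r)\rho_{GL}(s)\,\dd r\,\dd s$. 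For a general continuous $k_N$ on $[0,t]^2$, I approximate by finite sums of products $\sum_j a_j(r)b_j(s)$ (Stone--Weierstrass) and pass to the limit uniformly, using that $\nu_L([0,t])$ is integrable. Then I let $N\uparrow\infty$ by monotone convergence on both sides. The right-hand side becomes $\int\!\!\int|r-s|^{2H-2}\rho_{FL}(r)\rho_{GL}(s)\,\dd r\,\dd s=\langle\rho_{FL},\rho_{GL}\rangle_\cH$, which is finite because $\rho_{FL},\rho_{GL}\le C\rho_L$ pointwise and $\rho_L\in\cH$ by Corollary~\ref{cor:XX'} (this uses $H>\frac1\rho$ when $x=0$, and holds for all $H$ when $x\ne0$).

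The main obstacle is the identification step. The Stone--Weierstrass passage from product kernels to a general continuous kernel needs care because $\nu_L$ is a random measure, not a deterministic one. I will handle it pathwise: $\big|\int\!\!\int(k-k')\,\dd\nu_L\,\dd\nu'_L\big|\le\|k-k'\|_\infty\,\nu_L([0,t])\,\nu'_L([0,t])$, whose expectation is finite since $L_t\in L^2$. The identification of $\rho_{FL}$ through \eqref{rho-FL} also requires $F$ bounded, which is why I restrict to bounded $F,G$ first. If one prefers to avoid the monotone class argument, an alternative is to note directly that $\mE_n=\langle\nu_{L,n},\nu'_{L,n}\rangle_\cH\to\langle\nu_L,\nu'_L\rangle_\cH=\mE_*$ whenever $\nu_L,\nu'_L\in\cH$. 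However, establishing $\nu_L\in\cH$ a.s.\ requires the self-energy, which is a stronger condition, so I will use the weak characterization above instead.
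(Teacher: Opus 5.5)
Your proposal is correct and follows the paper's route: obtain $\mE$ from Lemma~\ref{lem:rhoKL-inH} and Proposition~\ref{al-converges}, then verify that $\mE_*$ satisfies the defining identity \eqref{al}. Your explicit $\pi$--$\lambda$ argument fills in the paper's terse final step; it works for the finite signed measure $\mE\,\dd\fP^{\otimes2}_{(t,x)}$, so you do not actually need $\mE\ge0$ a priori, contrary to your phrasing. The only difference is in computing $\fE^{\otimes2}_{(t,x)}[FG'\mE_*]$: the paper truncates $\tilde\rho_{GL}$ rather than the kernel and applies \eqref{rho-FL} twice with Tonelli, extending \eqref{rho-FL} to nonnegative integrands by monotone convergence, so your Stone--Weierstrass reduction to product kernels is valid but not needed.
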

\begin{proof}
By Lemma \ref{lem:rhoKL-inH} and Proposition \ref{al-converges}, $\mE(\nu_L,\nu'_L)$ is well-defined as the $L^{k}(\fP^{\otimes2}_{(t,x)})$-limit of $\mE_n(\nu_L,\nu'_L)$.
Recall \eqref{eq:rho_GL} and denote
$$\tilde{\rho}_{GL}(s):=\int_0^t\rho_{GL}(r)|r-s|^{2H-2}\dd r.$$
By Definition~\ref{def:al}, we have that for all bounded non-negative random variables $F$ and $G$ on $\mathfrak X$, 
\begin{equation*}
\mathbf{E}_{(t,x)}^{\otimes2}\big[FG'\mE(\nu_L,\nu'_L)\big]
=\langle\rho_{FL}, \rho_{GL}\rangle_{\mathcal{H}}=\langle\rho_{FL}, \tilde{\rho}_{GL}\rangle_{L^2([0,t])}.
\end{equation*}
Therefore, it suffices to prove
\begin{equation}\label{e:desired-eq}
\langle\rho_{FL}, \tilde{\rho}_{GL}\rangle_{L^2([0,t])}=\mathbf{E}_{(t,x)}^{\otimes2}\left[FG'\mE_\ast(\nu_L,\nu'_L)\right].
\end{equation}

Note that $\mathbf{E}_{(t,x)}^{\otimes2}\big[FG'\mE(\nu_L,\nu'_L)\big]\leq\|F\|_\infty\|G\|_\infty\|\rho_{1,L}\|_{\cH}^2<\infty$ by Proposition \ref{al-converges}. Hence,
\begin{align*}
\langle\rho_{FL}, \tilde{\rho}_{GL}\rangle_{L^2([0,t])}&=\lim_{N\to\infty}\langle\rho_{FL}, N\wedge\tilde{\rho}_{GL}\rangle_{L^2([0,t])}\\
&=\lim_{N\to\infty}\mathbf{E}_{(t,x)}\left[\int_0^tF(N\wedge\tilde{\rho}_{GL}(s))\dd L_s\right]\\
&=\mathbf{E}_{(t,x)}\left[\int_0^tF\tilde{\rho}_{GL}(s)\dd L_s\right]\\
&=\mathbf{E}_{(t,x)}\left[\int_0^t\int_0^tF\rho_{GL}(r)|r-s|^{2H-2}\dd r \dd L_s \right]
\\&=\mathbf{E}_{(t,x)}^{\otimes2}\left[\int_0^t\int_0^tFG'|r-s|^{2H-2}\dd L'_r\dd L_s \right]
\\&=\mathbf{E}_{(t,x)}^{\otimes2}\left[FG'\mE_\ast(\nu_L, \nu_L')\right]
,
\end{align*}
where the first and the third equalities follow from the  positivity of  $\rho_{FL}$ and $\tilde{\rho}_{GL}$ and the monotone convergence theorem, the second equality follows from \eqref{rho-FL}, the fifth equality follows from \eqref{rho-FL} again and Fubini's theorem. Then we conclude $\mE_*(\nu_L,\nu'_L)=\mE(\nu_L,\nu'_L)$.
\end{proof}

In Proposition~\ref{prop:al-mut-e}, we identify $\mE_{\ast}(\nu_{L},\nu_{L}')$ in \eqref{e:al*} with the mutual energy $\mE(\nu_{L},\nu_{L}')$ defined in Definition~\ref{def:al}.  Our approach is to approximate $\nu_L$ (or $\dd L_s$) by its finite-dimensional projection on the orthonormal basis $\{e_k\}_{k\in\bN}$ of $\cH$. Alternatively, note that by Lemma \ref{lem:L-measure}, we can approximate $\nu_L$ by $\nu_{L,\e}$ given in \eqref{e:nu_e}. Similar to \eqref{e:alpha-n}, we define
\begin{align}
m_{\e,k}(t)&:=\int_0^t\tilde{e}_k(s)\dd L_{s,\e}=\int_0^t\tilde{e}_k(s)\delta_{\e}(X_s)\dd s,\quad\forall k\in\bN,\label{def:m_ej}\\
\nu_{L,\e}^{(n)}&:=\sum\limits_{k=1}^n m_{\e,k}e_k,\label{def:nu_eLn}\\
\mE_n(\nu_{L,\e},\nu'_{L,\e})&:=\langle\nu_{L,\e}^{(n)},(\nu^{(n)}_{L,\e})'\rangle_{\cH}=\sum\limits_{k=1}^n  m_{\e,k}m'_{\e,k}=\mE_n(m_\e,m'_\e).\label{def:al_n_e}
\end{align}
By the same argument as in Proposition \ref{al-converges}, we can show that for any $\e>0$, $\mE_n(\nu_{L,\e},\nu'_{L,\e})$ converges to a limit $\mE(\nu_{L,\e},\nu'_{L,\e})$ in $L^2(\fP^{\otimes 2}_{(t,x)})$. On the other hand, by Lemma \ref{lem:L-measure} again, we have that for any $k,n\in\bN$, as $\e\to0$, $m_{\e,k}\to m_k$ and $\mE_n(\nu_{L,\e},\nu'_{L,\e})\to\mE_n(\nu_L,\nu'_L)=\mE_n(m,m')$ in $L^2(\fP^{\otimes 2}_{(t,x)})$. Then, it is natural to ask whether the classical mollification approach described above provides an equivalent approximation for the mutual energy $\mE(\nu_L,\nu'_L)=\mE_*(\nu_L,\nu'_L)$ by $\mE(\nu_{L,\e},\nu'_{L,\e})$. The following proposition provides an affirmative answer.

\begin{proposition}\label{al_e-to-al}
If $\rho\in(\frac{1}{H},2]\Longleftrightarrow \alpha\in(1-H, \frac12]$, then we have $\mE(\nu_{L,\e},\nu'_{L,\e})\to\mE(\nu_L,\nu'_L)$ in $L^2(\mathbf{P}^{\otimes 2}_{(t,x)})$ as $\e\to 0$.
\end{proposition}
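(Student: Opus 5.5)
The plan is to identify both limits explicitly as double integrals and then prove $L^2$ convergence by expanding the square. By Proposition~\ref{prop:al-mut-e}, $\mE(\nu_L,\nu'_L)=\mE_*(\nu_L,\nu'_L)=\int_0^t\int_0^t|r-s|^{2H-2}\dd L_r\dd L'_s$. The same argument applies to the mollified measures, since $\dd L_{s,\e}=\delta_\e(X_s)\dd s$ has a bounded density for fixed $\e$. It shows that
\[
\mE(\nu_{L,\e},\nu'_{L,\e})=\int_0^t\int_0^t|r-s|^{2H-2}\delta_\e(X_r)\delta_\e(X'_s)\dd r\dd s=:\mE_\e .
\]
Indeed, $\mE_n(m_\e,m'_\e)$ converges in $L^2$ (as noted before the proposition), and testing against $FG'$ identifies the limit with the double integral exactly as in the proof of Proposition~\ref{prop:al-mut-e}.

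The convergence will then follow from
\[
\fE^{\otimes2}_{(t,x)}\big[(\mE_\e-\mE_*)^2\big]=\fE[\mE_\e^2]-2\fE[\mE_\e\mE_*]+\fE[\mE_*^2].
\]
Each term is a four-fold integral. Because $L$ and $L'$ are independent, each term factorises as
\[
\int |r_1-s_1|^{2H-2}|r_2-s_2|^{2H-2}\,\Phi_{a}(r_1,r_2)\,\Phi_{b}(s_1,s_2),
\]
where $\Phi$ denotes the second-moment density of the relevant measure. For $\nu_L\otimes\nu_L$ this is $\rho_{2,L}$. For $\nu_{L,\e}\otimes\nu_{L,\e}$ it is
\[
\rho^\e_{2}(r_1,r_2)=\fE_{(t,x)}\big[\delta_\e(X_{r_1})\delta_\e(X_{r_2})\big],
\]
and there is a mixed version for $\nu_{L,\e}\otimes\nu_L$. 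Using the Markov property in the form of Lemma~\ref{lem:L-property}, these densities are averages of $g_\rho(t-r_2,x-y)\,g_\rho(r_2-r_1,y-z)$ over $y,z\in(-\e,\e)$. They converge pointwise to $\rho_{2,L}$, because $g_\rho$ is continuous off the diagonal. The whole problem therefore reduces to showing that each of the three four-fold integrals converges to $\|\rho_{2,L}\|^2_{\cH^{\otimes2}}$, which is finite by Lemma~\ref{lem:rhoKL-inH}. Once that holds, the expansion gives $\|\rho_{2,L}\|^2(1-2+1)=0$.

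The main obstacle is justifying this passage to the limit by dominated convergence. The key step is the uniform bound $g_\rho(u,y)\le g_\rho(u,0)\le Cu^{-1/\rho}$ from Lemma~\ref{lem:g-bound}. It gives
\[
\rho_2^\e(r_1,r_2)\le C\,g^*(t-r_2)\,(r_2-r_1)^{-1/\rho},
\qquad g^*(u):=\sup_{y}g_\rho(u,y),
\]
uniformly in $\e$, and the same bound holds for the mixed density. The $\e$-free majorant is essentially the function already shown, via the $L^{1/H}$ bound of Lemma~\ref{lem:1/H norm} under $\rho>1/H$, to give a finite $\cH^{\otimes 2}$-type integral in Lemma~\ref{lem:rhoKL-inH}. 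When $x\ne0$, $g^*(t-r_2)\le C(t-r_2)^{-1/\rho}$ is worse than $g_\rho(t-r_2,x)$, but it is still integrable in $L^{1/H}$ sense because $1/(\rho H)<1$, so the majorant remains finite. Dominated convergence then applies to each four-fold integral, since the kernel is nonnegative and the integrand is dominated by a product of the majorants, which is integrable against $|r_i-s_i|^{2H-2}$. Convergence of the three terms, and hence $L^2$ convergence, follows.
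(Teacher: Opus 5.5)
Your proof is correct, but it is organized differently from the paper's.

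\textbf{The paper's route.} The paper never writes $\mE(\nu_{L,\e},\nu'_{L,\e})$ as an explicit double integral. It inserts the truncation $\mE_n$ and splits the error as $\Delta_1+\Delta_2+\Delta_3$:
\begin{itemize}
\item $\Delta_1$ is handled by Proposition~\ref{al-converges};
\item $\Delta_2$, at fixed $n$, is handled by Lemma~\ref{lem:L-measure};
\item $\Delta_3$ is handled by bounding $\sum_{k,j>n}\langle\rho_{2,\e},e_k\otimes e_j\rangle^2_{\cH^{\otimes2}}$ through the tail of $\rho_{2,L}$ plus $\|\rho_{2,\e}-\rho_{2,L}\|^2_{\cH^{\otimes2}}$.
\end{itemize}
That last term vanishes by exactly your dominated-convergence estimate: the $\e$-uniform bound $C(t-s_1)^{-1/\rho}(s_1-s_2)^{-1/\rho}$ combined with Lemma~\ref{lem:1/H norm} when $\rho H>1$.

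\textbf{Your route.} You instead identify both random variables explicitly.
\begin{itemize}
\item $\mE(\nu_{L,\e},\nu'_{L,\e})=\langle\1_{[0,t]}\delta_\e(X_\cdot),\1_{[0,t]}\delta_\e(X'_\cdot)\rangle_{\cH}$. This also follows pathwise from Parseval, since $\1_{[0,t]}\delta_\e(X_\cdot)$ is bounded and hence lies in $\cH$; that is simpler than testing against $FG'$.
\item $\mE(\nu_L,\nu'_L)=\mE_*(\nu_L,\nu'_L)$, by Proposition~\ref{prop:al-mut-e}.
\end{itemize}
You then compute the $L^2$ distance exactly as $\|\rho^\e_2\|^2_{\cH^{\otimes2}}-2\|\rho^{\mathrm{mix}}_\e\|^2_{\cH^{\otimes2}}+\|\rho_{2,L}\|^2_{\cH^{\otimes2}}$, and each term tends to $\|\rho_{2,L}\|^2_{\cH^{\otimes2}}$.

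\textbf{Comparison.} Your route is shorter and avoids Lemma~\ref{lem:L-measure} and the $n$--$\e$ bookkeeping. It also yields directly the more natural statement that the mollified double integral $\int_0^t\int_0^t|r-s|^{2H-2}\delta_\e(X_r)\delta_\e(X'_s)\dd r\dd s$ converges to $\mE_*$.

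In exchange, it leans on Proposition~\ref{prop:al-mut-e}. It also needs the mixed second-moment density of $\delta_\e(X_{r_1})\dd r_1\,\dd L_{r_2}$. Lemma~\ref{lem:L-property} does not literally provide this, because for $r_1>r_2$ the integrand is not adapted. It does follow from the same Markov-property argument by conditioning at the later of the two times, but you should say so.

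The paper's projection route needs neither the mixed density nor the prior identification $\mE=\mE_*$.
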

\begin{proof}
We show that for any $\eta>0$, there exists an $\e_0>0$, such that for any $\e\in(0,\e_0)$, $\Vert\mE(\nu_L,\nu'_L)-\mE(\nu_{L,\e},\nu'_{L,\e})\Vert_{L^2(\fP^{\otimes2}_{(t,x)})}<\eta$. First, the triangle inequality yields
\begin{align*}
&\Vert\mE(\nu_L,\nu'_L)-\mE(\nu_{L,\e},\nu'_{L,\e})\Vert_{L^2(\mathbf{P}^{\otimes 2}_{(t,x)})}\\
\leq&\Vert\mE(\nu_L,\nu'_L)-\mE_n(\nu_L,\nu'_L)\Vert_{L^2(\mathbf{P}^{\otimes 2}_{(t,x)})}+\Vert\mE_n(\nu_L,\nu'_L)-\mE_n(\nu_{L,\e},\nu'_{L,\e})\Vert_{L^2(\mathbf{P}^{\otimes 2}_{(t,x)})} 
\\&+\Vert\mE_n(\nu_{L,\e},\nu'_{L,\e})-\mE(\nu_{L,\e},\nu'_{L,\e})\Vert_{L^2(\mathbf{P}^{\otimes 2}_{(t,x)})}\\
=&:\Delta_1+\Delta_2+\Delta_3.
\end{align*} 

For the term $\Delta_1$, Proposition~\ref{al-converges}  together with Lemma~\ref{lem:rhoKL-inH} imply that $\mE_n(\nu_L,\nu'_L)$ converges to $\mE(\nu_L,\nu'_L)$ in $L^k(\fP^{\otimes2}_{(t,x)})$ for all $k\in\bN$. Hence,  for any given $\eta>0$, there exists $n_1=n_1(\eta)\in\mathbb N$, such that for all $n\geq n_1$, we have $\Delta_1<\frac{\eta}{4}$.

For the term $\Delta_3$, we have that,
\begin{equation*}
\begin{aligned}
(\Delta_3)^2=&\fE_{(t,x)}^{\otimes2}\left[\left(\sum_{k=n+1}^{\infty} m_{\e,k}m'_{\e,k}\right)^2\right]\\
=&\fE_{(t,x)}^{\otimes2}\left[\sum_{k,j=n+1}^{\infty} (m_{\e,k}m_{\e,j})(m'_{\e,k}m'_{\e,j})\right]=\sum_{k,j=n+1}^\infty\Big(\fE_{(t,x)}\left[m_{\varepsilon,k}m_{\e,j}\right]\Big)^2.
\end{aligned}
\end{equation*}
A straightforward computation shows that
\begin{equation*}
\begin{aligned}
\fE_{(t,x)}\left[m_{\varepsilon,k}m_{\e,j}\right]&= \fE_{(t,x)}\left[\int_0^t\int_0^t \tilde e_k(r_1) \delta_\e (X_{r_1})\tilde e_j(r_2) \delta_\e (X_{r_2}) \dd r_1 \dd r_2\right]\\
&=\fE_{(t,x)}\left[\int_{[0,t]^4} e_k(s_1)e_j(s_2)\delta_\e(X_{r_1})\delta_\e(X_{r_2})|r_1-s_1|^{2H-2}|r_2-s_2|^{2H-2}\dd\boldsymbol{s}\dd\boldsymbol{r}\right]\\
&=\langle\rho_{2,\e},e_k\otimes e_j\rangle_{\mathcal{H}^{\otimes2}},
\end{aligned}
\end{equation*}
where $\rho_{2,\e}(s_1, s_2):=\mathbf{E}_{(t,x)}[\delta_{\e}(X_{s_1})\delta_{\e}(X_{s_2})]$. By the triangle inequality, $(a+b)^2\leq 2(a^2+b^2)$, and recalling $\rho_{2,L}$ from \eqref{e:rho-kL}, we get
\begin{equation}\label{eq:Delta3}
\begin{aligned}
(\Delta_3)^2=\sum_{k,j=n+1}^\infty \langle\rho_{2,\e},e_k\otimes e_j\rangle^2_{\mathcal{H}^{\otimes2}}\le 2\sum_{k,j=n+1}^\infty\Big(\langle\rho_{2,L},e_k\otimes e_j\rangle^2_{\mathcal{H}^{\otimes2}}+\langle\rho_{2,\e}-\rho_{2,L},e_k\otimes e_j\rangle^2_{\mathcal{H}^{\otimes2}}\Big).
\end{aligned}
\end{equation}
By Lemma \ref{lem:rhoKL-inH}, $\rho_{2,L}\in\mathcal{H}^{\otimes2}$ and hence $\sum_{k,j=n+1}^{\infty} \langle \rho_{2,L},e_k\otimes e_j\rangle_{\mathcal{H}^{\otimes2}}^2$ vanishes as $n\to\infty$. This implies that for any given $\eta>0$, we can find $n_2=n_2(\eta)\in\bN$, such that $\sum_{k,j=n+1}^{\infty} \langle \rho_{2,L},e_k\otimes e_j\rangle_{\mathcal{H}^{\otimes2}}^2<\frac{1}{16}\eta^2$ whenever $n\geq n_2$. For the second term above, observe that for $s_1>s_2$, \begin{equation*}
    \begin{aligned}
\rho_{2,\e}(s_1,s_2)&=\int_{\bR^2}\delta_{\e}(y_1)\delta_{\e}(y_2)g_\rho(t-s_1,y_1-x)g_\rho(s_1-s_2,y_1-y_2)\dd y_1\dd y_2\\
&\leq C\int_{\bR^2}\delta_{\e}(y_1)\delta_{\e}(y_2)(t-s_1)^{-\frac1\rho}(s_1-s_2)^{-\frac1\rho}\dd y_1\dd y_2=C(t-s_1)^{-\frac1\rho}(s_1-s_2)^{-\frac1\rho}
\end{aligned}
\end{equation*}
by Lemma \ref{lem:g-bound}, and thus for $s_1>s_2$, $\big|\rho_{2,L}(\boldsymbol{s})-\rho_{2,\e}(\boldsymbol{s})\big|\leq C(t-s_1)^{-1/\rho}(s_1-s_2)^{-1/\rho}$
by \eqref{e:rho-kL}. Then by Lemmas~\ref{lem:1/H norm} and~\ref{lem:a direct calculate}, we have that
\begin{equation*}
\begin{split}
\Vert\rho_{2,L}-\rho_{2,\e}\Vert_{\mathcal{H}^{\otimes2}}
\leq&C\left(\int_{[0,t]^2_<}(t-s_1)^{-\frac{1}{\rho H}}(s_2-s_1)^{-\frac{1}{\rho H}}\dd s_1\dd s_2\right)^{2H}<\infty
\end{split}
\end{equation*}
for $\rho\in(\frac{1}{H},2].$  Noting that $\rho_{2,\e}\to\rho_{2,L}$ almost everywhere as $\e\to0$ and by the dominated convergence theorem, we have that
\begin{equation*}
\Vert\rho_{2,L}-\rho_{2,\e}\Vert_{\mathcal{H}^{\otimes2}}=\sum\limits_{k,j=1}^\infty \langle\rho_{2,\e}-\rho_{2,L},e_k\otimes e_j\rangle^2_{\mathcal{H}^{\otimes2}}\longrightarrow0,\quad\text{as}~\e\to0.
\end{equation*}
Hence, there exists $\e_1$ such that for any $\e\in(0,\e_1)$, $\Vert\rho_{2,L}-\rho_{2,\e}\Vert_{\mathcal{H}^{\otimes2}}^2\leq \frac{1}{16}\eta^2$. Combining the analysis of the two terms in \eqref{eq:Delta3}, we have that for $\e\in(0,\e_1)$ and $n\geq n_2$,
\begin{equation*}
(\Delta_3)^2\leq\frac{\eta^2}{8}+2\|\rho_{2,L}-\rho_{2,\e}\|_{\cH^{\otimes2}}<\frac{\eta^2}{4}.
\end{equation*}



Finally, for the term $\Delta_2$, we have that for any fixed $n\in \mathbb N$, 
\begin{align*}
(\Delta_2)^2
=&\fE_{(t,x)}^{\otimes 2}\left[\left(\sum_{k=1}^n \big(m_{\e, k} m'_{\e, k}-m_{ k} m'_{ k}\big)\right)^2\right]\\
=&\sum_{k,j=1}^n\fE_{(t,x)}^{\otimes 2}\left[ \big(m_{\e, k} m'_{\e, k}-m_{ k} m'_{ k}\big)\big(m_{\e, j} m'_{\e, j}-m_{ j} m'_{ j}\big)\right]\\
=&\sum_{k,j=1}^n\left(\fE_{(t,x)}[m_{\e,k}m_{\e,j}]^2+\fE_{(t,x)}[m_{k}m_{j}]^2-\fE_{(t,x)}[m_{\e,k}m_{j}]^2-\fE_{(t,x)}[m_{k}m_{\e,j}]^2\right).
\end{align*}
Noting that for any fixed $k\in\bN$, $m_{\e,k}\to m_k$ in $L^2(\fP^{\otimes2}_{(t,x)})$ by Lemma \ref{lem:L-measure}. Hence, as $\e\to0$, $\fE_{(t,x)}[m_{\e,k}m_{\e,j}]\to\fE_{(t,x)}[m_k m_j]$ and $\fE_{(t,x)}[m_{\e,k}m_{j}]\to\fE_{(t,x)}[m_k m_j]$ for any fixed $k,j\in\bN$. Fix $n_3=n_1\vee n_2$ and then there exists $\e_2=\e_2(n_3,\eta)=\e_2(\eta)>0$ such that for any $\e\in(0,\e_2)$, $\Delta_2<\frac{\eta}{4}$.

Combining all the estimates above, we obtain that for any $\eta>0$, there exists $n_3\in\bN$, which only depends on $\eta$, and $\e_0=\e_1\wedge\e_2$, which depends on $n_3$ and $\eta$, such that for any $\e\in(0,\e_0)$,
\begin{align*}
&\Vert\mE(\nu_L,\nu'_L)-\mE(\nu_\e,\nu'_\e)\Vert_{L^2(\mathbf{P}^{\otimes 2}_{(t,x)})}\leq\Delta_1+\Delta_2+\Delta_3\le \frac{\eta}{4}+\frac{\eta}{4}+\frac{\eta}{2}=\eta,
\end{align*}
which concludes the results.
\end{proof}

\begin{remark}
As discussed in Remark~\ref{rmk:mollified_she}, an alternative approach to constructing the solution to the SHE \eqref{e:she} is to identify the limit $\lim_{\e\to0}u_\e$, where $u_\e$ in \eqref{e:FK-e} solves the mollified SHE~\eqref{e:she-e}. If the $L^1$-convergence of $u_\e$ can be established, then Proposition~\ref{al_e-to-al} ensures that $\lim_{\e\to0} u_\e$ coincides with the solution $u$ given by \eqref{e:FK} and \eqref{e:FK'}.

Moreover, Proposition~\ref{al_e-to-al} is of independent interest, since it establishes a bridge between the classical mollification approach and the projection approach in the study of mutual energy. For further discussion on mutual energy, see Section~\ref{sec:al+H>1}.
\end{remark}

\subsection{\texorpdfstring{$L^1$-convergence for $\rho\in(\frac{1}{2H-1},2]\Longleftrightarrow\alpha+2H>2$  and $\alpha\leq\frac12$}{}}\label{sec:al+2H>2}
In this section, we show the existence of an $L^1$-solution to our SHE \eqref{e:she} for $\rho\in(\frac{1}{2H-1},2]\Longleftrightarrow\alpha+2H>2$ and $\alpha\leq\frac12$. Recall the solution $u_n(t,x)=Z_n(t,x)$ to the projected equation \eqref{e:eq-u_n} from \eqref{e:unt}. The following result, together with the uniqueness in Proposition \ref{prop:un-u}, completes the proof of Theorem \ref{thm-well-posedness} (3).

\begin{proposition}\label{prop:Zn-Z}
If $\rho\in(\frac{1}{2H-1},2]\Longleftrightarrow\al+2H>2$ and $\alpha\leq\frac12$, then as $n\to\infty$, $Z_n(t,x)\to Z(t,x)$ in $L^1(\bP)$.
\end{proposition}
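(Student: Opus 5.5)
The plan is to verify the criterion of Proposition~\ref{prop:Z-convege}: for every $p\in(0,1)$ we construct $A\in\mathcal G$ with $\fP_{(t,x)}(A)>p$ and $\mathfrak e(\1_A m)<\infty$. Since $\rho>\frac1{2H-1}$ implies $\rho>\frac1H$, Proposition~\ref{prop:al-mut-e} identifies $\lim_n\mE_n(m,m')$ with $\mE_*(\nu_L,\nu_L')=\int_0^t\int_0^t|r-s|^{2H-2}\dd L_r\dd L'_s$, with convergence in every $L^k(\fP^{\otimes2}_{(t,x)})$. Expanding the exponential and using \eqref{al-rho}, which gives nonnegativity of all moments $\fE^{\otimes2}[\mE_n(\1_Am,\1_{A'}m')^k]=\|\1_{\{1,\dots,n\}^k}\rho_{k,\1_Am}\|^2\ge0$ and their monotonicity in $n$, we obtain
\[
\mathfrak e(\1_Am)\le\sum_{k\ge0}\frac{1}{k!}\fE^{\otimes2}_{(t,x)}\big[\1_A\1_{A'}\mE_*(\nu_L,\nu_L')^k\big].
\]
The limit passage is justified by Fatou's lemma and monotone convergence of the series. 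Thus it suffices to bound the moments of the mutual energy restricted to a good event by something like $C^k(k!)^{1-\delta}$.

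The natural good event controls the \emph{self-energy} $I_t=\int\int|r-s|^{2H-2}\dd L_r\dd L_s$, which is a.s.\ finite by Corollary~\ref{cor:XX} exactly when $\rho>\frac1{2H-1}$. Together with $L_t<\infty$, this lets us take $A=\{I_t\le K,\ L_t\le K\}$ with $K$ large, so that $\fP_{(t,x)}(A)>p$. The key estimate is a Cauchy--Schwarz inequality in $\mathcal H$: the kernel $|r-s|^{2H-2}$ is positive definite, so the bilinear form $(\mu,\nu)\mapsto\int\int|r-s|^{2H-2}\mu(\dd r)\nu(\dd s)$ on nonnegative measures satisfies
\[
\mE_*(\nu_L,\nu'_L)\le I_t^{1/2}(I'_t)^{1/2}.
\]
This is justified by mollifying the measures, using Lemma~\ref{lem:L-measure} and Proposition~\ref{al_e-to-al}, or directly by monotone approximation of the kernel. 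Hence on $A\times A'$ we have $\mE_*\le K$, and therefore $\mathfrak e(\1_Am)\le e^K<\infty$.

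The main obstacle is making the first step rigorous: \eqref{e:mu-exp-A} involves $\sum_{k\le n}m_km'_k$, which for finite $n$ is not bounded by $\mE_*$. I would handle this by noting that $\sum_{k\le n}m_km_k'\le(\sum_{k\le n}m_k^2)^{1/2}(\sum_{k\le n} m_k'^2)^{1/2}\le I_t^{1/2}I_t'^{1/2}$, which follows from Bessel's inequality once we know $\sum_k m_k^2=\|\1_{[0,t]}\nu_L\|_{\mathcal H}^2=I_t$. This in turn follows by the argument of Proposition~\ref{prop:al-mut-e} applied with $L'=L$, i.e.\ identifying the $\mathcal H$-norm of the local-time measure with the self-energy, using finiteness from Corollary~\ref{cor:XX}. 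With this, $\exp(\sum_{k\le n}\1_A\1_{A'}m_km_k')\le e^{K}$ uniformly in $n$, which gives $\mathfrak e(\1_Am)\le e^K$ directly and avoids the series expansion altogether. The condition $\alpha\le\frac12$ only ensures $\rho\le2$, so that $X$ and its local time are defined.
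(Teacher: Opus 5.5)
Your final argument (the Bessel route) is correct, and it differs from the paper's.

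\textbf{How the paper argues.} The paper never bounds $\mE_n$ pathwise. It proves $\mE_*\le\frac12(I_t+I'_t)$ from \eqref{eq:decomp_int} and takes $A_l=\{I_t<l\}$. It then controls $\mathfrak e(\1_{A_l}m)$ through moments, using three ingredients:
\begin{enumerate}
\item the comparison $\fE^{\otimes2}_{(t,x)}[\1_{A_l\times A_l'}\mE_n^{k}]\le\fE^{\otimes2}_{(t,x)}[\1_{A_l\times A_l'}\mE^{k}]$ from \eqref{al-rho};
\item the identification $\mE=\mE_*$ from Propositions~\ref{al-converges} and~\ref{prop:al-mut-e};
\item a $\cosh$-series rather than an $\exp$-series.
\end{enumerate}
The $\cosh$ is needed because $\mE_n$ can be negative, so Tonelli applies only to the even powers. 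Your first sketch, which expands $\exp$ directly, would need exactly this fix.

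\textbf{What your route buys.} The Bessel route replaces all of this with the $n$-uniform pathwise bound
\[
\1_A\1_{A'}\mE_n\le\1_A\1_{A'}I_t^{1/2}(I_t')^{1/2}\le K,
\]
which gives $\mathfrak e(\1_Am)\le e^K$ at once. It needs only $I_t<\infty$ a.s.\ and bypasses the $L^k$-theory of the mutual energy. The paper's route, by contrast, avoids any pathwise statement about $\sum_k m_k^2$ and reuses machinery it needs anyway for Lemma~\ref{lem:mut-exp-eq} and Proposition~\ref{prop:FK-moment}. The constraint $L_t\le K$ in your event is superfluous.

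\textbf{Caveat on the key step $\sum_{k\le n}m_k^2\le I_t$.} Your justification via the argument of Proposition~\ref{prop:al-mut-e} with $L'=L$ does not transfer. That proof identifies the limit by testing against $FG'$ and uses that $\fE^{\otimes2}_{(t,x)}[FG'\,\cdot\,]$ factorizes into $\langle\rho_{FL},\rho_{GL}\rangle_{\mathcal H}$, which relies on the independence of $L$ and $L'$. You have two ways to close this:
\begin{enumerate}
\item cite Remark~\ref{rem:self-energy}, which rests on Proposition~\ref{prop:delta-L2}; or
\item prove the inequality pathwise. Set $h_n:=\sum_{k\le n}m_ke_k$, so that
\[
\|h_n\|_{\mathcal H}^2=\sum_{k\le n}m_k^2=\int_0^t\int_0^T h_n(r)|r-s|^{2H-2}\dd r\,\dd L_s .
\]
Write the kernel via \eqref{eq:decomp_int} and apply Cauchy--Schwarz in $L^2(\dd u)$; Fubini is harmless since $h_n$ is bounded and $L_t<\infty$. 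This bounds the right-hand side by $\|h_n\|_{\mathcal H}I_t^{1/2}$, which gives the claim.
\end{enumerate}
Only this inequality is needed, not the equality $\sum_k m_k^2=I_t$.
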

\begin{proof}
Recall the mutual energy $\mE(\nu_L,\nu'_L)=\mE_*(\nu_L,\nu'_L)$ from \eqref{al} and \eqref{e:al*}. Noting that
\begin{equation}\label{eq:decomp_int}
|r-s|^{2H-2}=C_H\int_\bR|r-u|^{-\frac{3-2H}{2}}|s-u|^{-\frac{3-2H}{2}}\dd u,
\end{equation}
we have
\begin{equation*}
\begin{aligned}
\mE(\nu_L,\nu_L')&=\int_0^t\int_0^t|r-s|^{2H-2}\dd L_r\dd L'_s\\
&=C_H\int_R\left(\int_0^t|r-u|^{-\frac{3-2H}{2}}\dd L_r\right)\left(\int_0^t|s-u|^{-\frac{3-2H}{2}}\dd L'_s\right)\dd u\\
&\leq C_H\left(\int_\bR\left(\int_0^t|r-u|^{-\frac{3-2H}{2}}\dd L_r\right)^2\dd u\cdot\int_\bR\left(\int_0^t|s-u|^{-\frac{3-2H}{2}}\dd L'_s\right)^2\dd u \right)^{\frac{1}{2}}\\
&\leq \frac{C_H}{2}\left(\int_\bR\left(\int_0^t|r-u|^{-\frac{3-2H}{2}}\dd L_r\right)^2\dd u+\int_\bR\left(\int_0^t|s-u|^{-\frac{3-2H}{2}}\dd L'_s\right)^2\dd u \right)\\
&=\frac{1}{2}\left(\int_0^t\int_0^t|r-s|^{2H-2}\dd L_r\dd L_s+\int_0^t\int_0^t|r-s|^{2H-2}\dd L'_r\dd L'_s\right),
\end{aligned}
\end{equation*}
where we have used Cauchy-Schwarz inequality and \eqref{eq:decomp_int} for the second and the last equalities. 

By Corollary \ref{cor:XX},
\begin{equation}\label{e:self-int}
\mathbf{E}_{(t,x)}\left[\int_0^t\int_0^t|r-s|^{2H-2}\dd L_r\dd L_s\right]<\infty,\quad \text{iff}~\rho\in\left(\frac{1}{2H-1},2\right]. 
\end{equation}
Then, define $$A_l:=\left\{\omega:\int_0^t\int_0^t|r-s|^{2H-2}\dd L_r\dd L_s<l\right\},$$ and $A_l^{'}$ is defined in the same way  with $L$ replaced by  $L'$. Due to \eqref{e:self-int}, for any $p\in(0,1)$, we can find large enough $l>0$ , such that $\mathbf{P}_{(t,x)}(A_l)>p$. The same computation as \eqref{al-rho} yields that for any $k\in\bN$,
$$\mathbf{E}_{(t,x)}^{\otimes2}\left[\mathbf{1}_{A_l\times A_l^{'}}\mE_{ n}^k\right]=\Vert\mathbf{1}_{\{1,\cdots,n\}^k}\rho_{k,\mathbf{1}_{A_l}m}\Vert^2_{l^2(\bN^k)}\le\Vert\rho_{k,\mathbf{1}_{A_l}m}\Vert^2_{l^2(\bN^k)}=\mathbf{E}_{(t,x)}^{\otimes2}\left[\mathbf{1}_{A_l\times A_l^{'}}\mE^k\right].$$

Finally, recall $\mathfrak{e}(\mathbf{1}_Am)$ from \eqref{e:mu-exp-A}. By Proposition \ref{prop:Z-convege}, to show that as $n\to\infty$, $Z_n$ converges in $L^1(\bP)$, it suffices to verify that $\mathfrak{e}(\mathbf{1}_Am)<+\infty$. We have that
\begin{equation}\label{e:mu-exp-ineq}
\begin{aligned}
\mathfrak{e}(\mathbf{1}_{A_l} m)&=\lim_{n\to\infty}\mathbf{E}_{(t,x)}^{\otimes2}\left[\exp\left\{\sum_{i=1}^n\mathbf{1}_{A_l\times A_l^{'}}m_{i}m_{i}'\right\}\right]=\lim_{n\to\infty}\mathbf{E}_{(t,x)}^{\otimes2}\left[\exp\left\{\mathbf{1}_{A_l\times A_l^{'}}\mE_{n}\right\}\right]\\
&\leq 2\lim_{n\to\infty}\mathbf{E}_{(t,x)}^{\otimes2}\left[\cosh\left( \mathbf{1}_{A_l\times A_l^{'}}\mE_n\right)\right]=2\lim_{n\to\infty}\mathbf{E}_{(t,x)}^{\otimes2}\left[\sum_{k=0}^\infty\frac{1}{(2k)!}\left(\mathbf{1}_{A_l\times A_l^{'}}\mE_{n}\right)^{2k}\right]\\
&=2\lim_{n\to\infty}\sum_{k=0}^\infty\frac{1}{(2k)!}\mathbf{E}_{(t,x)}^{\otimes2}\left[\mathbf{1}_{A_l\times A_l^{'}}\mE_{n}^{2k}\right]=2\sum_{k=0}^\infty\frac{1}{(2k)!}\mathbf{E}_{(t,x)}^{\otimes2}\left[\mathbf{1}_{A_l\times A_l^{'}}\mE^{2k}\right]\\
&=2\mathbf{E}_{(t,x)}^{\otimes2}\left[\cosh\left( \mathbf{1}_{A_l\times A_l^{'}}\mE\right)\right] <\infty,
\end{aligned}
\end{equation}
where we have used Fubini's theorem and the bounded convergence theorem in the third line, and Taylor expansion in  the second and the last lines, which completes the proof.
\end{proof}

\subsection{Discussion on the condition \texorpdfstring{$\rho\in(\frac{1}{2H-1},2]\Longleftrightarrow\alpha+2H>2$  and $\alpha\leq\frac12$}{}}\label{sec:al+H>1}

Recall the Weinrib--Halperin prediction from Section \ref{sec:pinning-model}. It conjectured that our SHE \eqref{e:she} or its discrete counterpart, the disorder pinning model \eqref{def:Gibbs}--\eqref{e:partition-dis} should be disorder relevant whenever $\alpha+H>1$ for $H\in(\frac12,1)$. This suggests that the SHE \eqref{e:she} should have an $L^1$-solution if $\rho>\frac{1}{H}\Longleftrightarrow\alpha+H>1$. In our analysis throughout Sections \ref{sec:local}--\ref{sec:al+2H>2}, the only place where we have relied on the stronger assumption $\rho\in(\frac{1}{2H-1},2]\Longleftrightarrow\alpha+2H>2$  and $\alpha\leq\frac12$ is in establishing the $L^1$-convergence of $Z_n(t,x)\to Z(t,x)$ as $n\to\infty$ in Section \ref{sec:al+2H>2}, which depends on $\alpha+2H>2$ via Corollary~\ref{cor:XX}. Otherwise, all other analyses remain valid under the weaker condition $\alpha+H>1$.

In the proof for Proposition \ref{prop:Zn-Z}, we applied a Cauchy-Schwarz inequality to decouple $L_s$ and $L'_s$ so that we can introduce the crucial event $A_l$, which may enlarge the mutual energy $\mE(\nu_L,\nu'_L)$ by introducing the \emph{self-energy}
\begin{equation}\label{def:self_energy}
I_t:=\int_0^t\int_0^t|r-s|^{2H-2}\dd L_r\dd L_s.
\end{equation}
However, apart from that, we emphasize that our other derivation is already optimal. To define the event $A_l$, we use the fact $\fE_{(t,x)}[I_t]<\infty$ iff $\rho\in(\frac{1}{2H-1},2]$  from Corollary \ref{cor:XX}, which is a sufficient condition for  $\fP_{(t,x)}(I_t=\infty)=0$. Surprisingly, it turns out that this condition is also necessary by the following result, whose proof is due to a delicate application of the Hausdorff $\varphi$-measure and the Riesz capacity.



\begin{lemma}\label{lem:infty}
If $\rho\in(\frac{1}{2H-1},2]\Longleftrightarrow\al+2H\leq2$  and $\alpha\leq\frac12$, then $\fP_{(t,x)}(I_t=\infty|L_t>0)=1$.
\end{lemma}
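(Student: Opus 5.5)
The statement should be read for the complementary regime $\rho\in(1,\frac{1}{2H-1}]$, i.e.\ $\alpha+2H\le 2$ with $\alpha\le\frac12$, where $\rho=\frac1{1-\alpha}$. Write $\gamma:=2-2H$, so the hypothesis is $\gamma\ge\alpha$. The plan is to move from the time scale of $X$ to the local-time scale using the inverse local time, and to exploit the self-similarity of that inverse.

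\emph{Step 1 (reduction to the free process).} Under $\fP_{(t,x)}$ the path is a time-reversed stable process started at $x$. For any $\delta>0$, its law restricted to $[\delta,t]$ is absolutely continuous with respect to the law of a stable process started from a random point. The density is bounded by Lemma~\ref{lem:g-bound}. Moreover, $I_t\ge I$ restricted to any subinterval, and $\{L_t>0\}$ is, up to null sets, the union over rational subintervals of the events that $L$ charges that subinterval. It therefore suffices to prove the following for a stable process $X$ started at $0$: almost surely, for Lebesgue-a.e.\ local-time level $a$,
\[
\int |\tau_b-\tau_a|^{-\gamma}\,\dd b=\infty
\]
over every neighbourhood of $a$. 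Here $\tau_a=\inf\{s:L_s>a\}$ is the inverse local time, which is an $\alpha$-stable subordinator with $\alpha=1-\frac1\rho$.

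\emph{Step 2 (change of variables).} Since $\dd L$ is the pushforward of Lebesgue measure under $\tau$, we have
\[
I=\int\!\!\int |\tau_a-\tau_b|^{-\gamma}\,\dd a\,\dd b
\]
over $[0,L_t]^2$. By Fubini it is enough to show, for each fixed $a$, that a.s.
\[
J_a:=\int_0^{\eta}(\tau_{a+h}-\tau_a)^{-\gamma}\,\dd h=\infty
\]
for every $\eta>0$. By the strong Markov property, $h\mapsto \tau_{a+h}-\tau_a$ is again an $\alpha$-stable subordinator $\sigma$, so we need $\int_0^\eta \sigma_h^{-\gamma}\dd h=\infty$ a.s. Then Fubini gives a full-measure set of levels $a$, and hence $I=\infty$ whenever $L_t>0$.

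\emph{Step 3 (the key divergence, main obstacle).} If $\gamma>\alpha$, this is easy. Khintchine's law for stable subordinators gives $\sigma_h\le h^{1/\alpha}|\log h|^{2/\alpha}$ for small $h$, so the integrand is at least $h^{-\gamma/\alpha}$ times a logarithmic factor, which is not integrable at $0$. The delicate case is $\gamma=\alpha$, where this pathwise bound only gives $h^{-1}|\log h|^{-2}$, which is integrable. Here I would use scaling and ergodicity instead. Set
\[
Y_k:=\int_{2^{-k-1}}^{2^{-k}}\sigma_h^{-\alpha}\,\dd h .
\]
By self-similarity $\sigma_{ch}\overset{d}{=}c^{1/\alpha}\sigma_h$, the sequence $(Y_k)_{k\ge0}$ is stationary. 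Its mean is $\E Y_k=\log 2\cdot\E[\sigma_1^{-\alpha}]\in(0,\infty)$, since negative moments of positive stable laws are finite. The scaling shift $\sigma\mapsto 2^{1/\alpha}\sigma_{\cdot/2}$ is ergodic: invariant events are germ events at $0^+$, and these are trivial by Blumenthal's $0$-$1$ law for the independent-increment process $\sigma$. Birkhoff's theorem then gives $\frac1n\sum_{k<n}Y_k\to\E Y_0>0$ a.s., so $\sum_k Y_k=\infty$ and $J_a=\infty$ a.s. The same argument covers $\gamma>\alpha$ as well, using $\sigma_h^{-\gamma}\ge\sigma_h^{-\alpha}$ for small $\sigma_h$.

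Finally, I will check the measurability and Fubini details on the product space, and confirm that the conditioning on $\{L_t>0\}$ in Step 1 is handled by the rational-interval decomposition.
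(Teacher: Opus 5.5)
Your proof is correct. You also read the garbled hypothesis the way the paper's proof actually uses it: $\rho\in(1,\frac1{2H-1}]$, i.e.\ $\gamma=2-2H\ge\alpha$. Your route, however, is genuinely different from the paper's.

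The paper works on the time axis with potential theory. For the zero set $\mathcal Z_t$ it uses $\mathcal C_{2-2H}(\mathcal Z_t)\ge L_t^2/I_t$, and the fact that a set of finite $(2-2H)$-dimensional Hausdorff measure has zero $(2-2H)$-capacity (Mattila, Thm.~8.9). It then invokes Taylor--Wendel: $\dd L$ is equivalent to the exact Hausdorff measure with gauge $\varphi(s)=s^{\alpha}(\log|\log s|)^{1-\alpha}$, so $s^{2-2H}/\varphi(s)\to0$ gives $\mathcal H^{2-2H}(\mathcal Z_t)=0$. The critical case $2-2H=\alpha$ is absorbed silently by the $\log\log$ factor in that exact gauge.

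You instead move to the local-time axis. The inverse local time is an $\alpha$-stable subordinator, and $I=\int\!\!\int|\tau_a-\tau_b|^{-\gamma}\,\dd a\,\dd b$. Fubini then reduces everything to a.s.\ divergence of $\int_0^\eta\sigma_h^{-\alpha}\,\dd h$. You handle criticality explicitly, through scaling stationarity of the dyadic blocks $Y_k$ together with Blumenthal's $0$-$1$ law.

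What each buys:
\begin{itemize}
\item The paper's argument is shorter given its deep citations. It also proves more: zero capacity means every measure on $\mathcal Z_t$, not only $\dd L$, has infinite $(2-2H)$-energy.
\item Yours is self-contained and needs only scaling and $0$-$1$ laws. It also shows the divergence occurs locally at a.e.\ local-time level.
\end{itemize}

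Two minor points. First, your claim that invariant sets of the scaling map are germ events is immediate only if the map acts on paths indexed by $[0,1]$, where $S^{-k}A\in\mathcal F_{2^{-k}}$. That suffices, since each $Y_k$ only sees $[0,1]$. Alternatively you can skip Birkhoff: $\{Y_k>c\text{ i.o.}\}$ is a germ event of probability at least $\bP(Y_0>c)>0$, so it has probability one, and $\sum_k Y_k=\infty$ follows directly. Second, Step~1 simplifies. Both $I_t$ and $\{L_t>0\}$ are invariant under $s\mapsto t-s$, so under $\fP_{(t,x)}$ you are just looking at the local time at $0$ of a stable process started at $x$. The strong Markov property at its hitting time of $0$ then reduces everything to the process started at $0$, with no absolute-continuity argument needed.
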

\begin{proof}
Define $\mathcal Z_t:=\{s\in[0,t]: X_s=0\}$ be the zero level set of $X$ on $[0,t]$. By \cite[Definition 8.4]{M99}, the Riesz capacity $\mathcal{C}_{2-2H}(\mathcal Z_t)\geq L_t^{2}I_t^{-1}$. Hence, to show that $\fP_{(t,x)}(I_t=\infty|L_t>0)=1$, it suffices to show that $\fP_{(t,x)}(\mathcal{C}_{2-2H}(\mathcal Z_t)=0)=1$, noting that $L_t<\infty$ $\fP_{(t,x)}$-a.s. Then by \cite[Theorem 8.9(1)]{M99}, it further suffices to show that the Hausdorff $\psi$-measure $\cH^{\psi}$ induced by $\psi(s):=s^{2-2H}$ has a finite measure on  $\mathcal Z_t$.

Recall that $\rho>1$. Then by \cite[Theorem~1]{TW66}, $\fP_{(t,x)}$-a.s., $\dd L_s$ is equivalent to a finite positive Hausdorff $\varphi$-measure $\mathcal{H}^\varphi$ on $\mathcal{Z}_t$ with 
$$\varphi(s): = s^{1-1/\rho}(\log |\log s|)^{1/\rho}=s^{\al}(\log |\log s|)^{1-\al}.$$
Note that by $\alpha+2H\leq2$ and $\alpha\leq\frac12$,
$$\lim_{s\to 0}\frac{\psi(s)}{\varphi(s)}=\lim_{s\to 0}\frac{s^{2-2H}}{s^{\alpha}(\log |\log s|)^{1-\al}}=0.$$
Hence, by the comparison principle \cite[Theorem 40]{R70}, the finiteness of $\cH^{\varphi}$ on $\mathcal Z_t$ implies that $\cH^{\psi}$ has zero measure on $\mathcal Z_t$, which concludes the result.
\end{proof}

From the lemma above, we conjecture that the condition $\alpha+2H=2$ may also mark a form of criticality, the nature of which remains unclear at present. Recall further that the Weinrib–Halperin prediction suggests $\alpha+H=1$ as the threshold between disorder relevance and irrelevance, while $\alpha=\frac12$ separates the $L^2$- and $L^1$-regimes. The regime
\begin{equation*}
\bigg\{(\alpha,H):\alpha+H>1, \alpha+2H\leq2, \alpha<\frac12\bigg\}
\end{equation*}
remains largely unexplored, and we believe that new approaches are needed to address this parameter range. The result below illustrates that, when $\alpha+2H>2$, our model indeed exhibits finer structural properties, providing further evidence in support of this conjecture.

\begin{proposition}\label{prop:delta-L2}
Under the condition $\rho\in(\frac{1}{2H-1},2]\Longleftrightarrow\al+2H>2$ and $\alpha\leq\frac12$, $\{\delta_\e(X_.)\}_{\e>0}$ is a Cauchy sequence in  $L^2(\mathfrak X; \mathcal H)$ as $\e\to 0$.
\end{proposition}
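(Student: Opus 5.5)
We prove the Cauchy property by expanding the squared $L^2(\mathfrak X;\mathcal H)$-distance and identifying a common limit of all the cross terms. Write $\delta_\e(X_\cdot)$ for the path $s\mapsto \mathbf 1_{[0,t]}(s)\delta_\e(X_s)$. Then
\[
\fE_{(t,x)}\big\|\delta_\e(X)-\delta_{\e'}(X)\big\|_{\mathcal H}^2=\Phi(\e,\e)+\Phi(\e',\e')-2\Phi(\e,\e'),
\]
where
\[
\Phi(\e,\e'):=\fE_{(t,x)}\Big[\int_0^t\int_0^t\delta_\e(X_r)\delta_{\e'}(X_s)|r-s|^{2H-2}\dd r\dd s\Big]=\int_0^t\int_0^t\rho_{2,\e,\e'}(r,s)|r-s|^{2H-2}\dd r\dd s,
\]
with $\rho_{2,\e,\e'}(r,s):=\fE_{(t,x)}[\delta_\e(X_r)\delta_{\e'}(X_s)]$. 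It therefore suffices to show that
\[
\Phi(\e,\e')\to \Phi_0:=\int_0^t\int_0^t \rho_{2,L}(r,s)|r-s|^{2H-2}\dd r\dd s=\fE_{(t,x)}[I_t]
\]
as $\e,\e'\to0$ jointly. By Corollary~\ref{cor:XX}, $\Phi_0$ is finite exactly under the hypothesis $\rho>\frac1{2H-1}$. The identity $\Phi_0=\fE_{(t,x)}[I_t]$ follows from \eqref{3} and \eqref{e:rho-kL}.

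The first step is pointwise convergence. For $r\neq s$, say $r>s$, the Markov property as in the proof of Proposition~\ref{al_e-to-al} gives
\[
\rho_{2,\e,\e'}(r,s)=\int_{\R^2}\delta_\e(y_1)\delta_{\e'}(y_2)\,g_\rho(t-r,y_1-x)\,g_\rho(r-s,y_1-y_2)\,g_\rho(s,y_2)\dd y_1\dd y_2
\]
(up to the time-reversal convention). By continuity of $g_\rho$ away from time $0$, this converges to $\rho_{2,L}(r,s)$, which is $2!f_2$. The case $s>r$ is symmetric.

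The second step is a domination bound, and it is the main point. Using $g_\rho(u,y)\le g_\rho(u,0)=Cu^{-1/\rho}$ from Lemma~\ref{lem:g-bound}, together with $\int\delta_\e=1$, we get uniformly in $\e,\e'$
\[
\rho_{2,\e,\e'}(r,s)\le C\,(t-r\vee s)^{-1/\rho}\,|r-s|^{-1/\rho}.
\]
Here we use the bound on $g_\rho(t-r,\cdot)$, which is bounded by $C(t-r)^{-1/\rho}$; this is integrable since $\rho>1$. We then need
\[
\int_0^t\int_0^t (t-r\vee s)^{-1/\rho}\,|r-s|^{-1/\rho+2H-2}\dd r\dd s<\infty.
\]
The inner singularity $|r-s|^{-1/\rho+2H-2}$ is integrable precisely when $2H-1-\frac1\rho>0$, i.e.\ $\rho>\frac1{2H-1}$; this is the same computation as in Corollary~\ref{cor:XX}. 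The outer factor is integrable because $\rho>1$. Note that in Proposition~\ref{al_e-to-al} an $L^{1/H}$ bound was used, but here we integrate directly against the kernel, with no H\"older step. Dominated convergence then gives $\Phi(\e,\e')\to\Phi_0$.

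Finally, combining the two steps, the expression $\Phi(\e,\e)+\Phi(\e',\e')-2\Phi(\e,\e')$ tends to $\Phi_0+\Phi_0-2\Phi_0=0$. This proves the Cauchy property in $L^2(\mathfrak X;\mathcal H)$, with respect to $\fP_{(t,x)}$ (and likewise $\fP$).

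The only delicate point is the uniform domination. One must check that the mollified spatial integration does not destroy the bound near the diagonal $r=s$. This holds because the bound $\sup_y g_\rho(u,y)=g_\rho(u,0)$ removes all dependence on $\e$ and $\e'$. One must also check that no factor $s^{-1/\rho}$ arises near time $0$: under the bridge-free law $\fP_{(t,x)}$ with arbitrary initial point, only the transition densities between the two time points and the terminal time appear.
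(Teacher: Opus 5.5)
Your proposal is correct and follows essentially the paper's own proof. You expand $\fE_{(t,x)}\|\delta_\e(X)-\delta_{\e'}(X)\|_{\cH}^2$ into cross terms, dominate $\fE_{(t,x)}[\delta_\e(X_r)\delta_{\e'}(X_s)]$ uniformly by $C(t-r\vee s)^{-1/\rho}|r-s|^{-1/\rho}$ via Lemma~\ref{lem:g-bound}, and use $\rho>\frac{1}{2H-1}$ with dominated convergence to get a common finite limit $\fE_{(t,x)}[I_t]$.

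One small slip: under $\fP_{(t,x)}$ the two-point density has only the factors $g_\rho(t-r,y_1-x)\,g_\rho(r-s,y_1-y_2)$, as your own closing remark says. The extra $g_\rho(s,y_2)$ in your display should be dropped. It would not break the argument, since the Cauchy property only needs a common finite limit.
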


\begin{proof}

For any $\e,\eta>0$,
\begin{equation*}
\begin{aligned}
\mathbf{E}_{(t,x)}\big[\langle\delta_\e(X_\cdot),\delta_\eta(X_\cdot)\rangle_\mathcal{H}\big]&=\fE_{(t,x)}\left[\int_0^t\int_0^t\delta_\e (X_r)\delta_\eta(X_s)|s-r|^{2H-2}\dd r\dd s\right].
\end{aligned}
\end{equation*}
Taking $s<r$, we have that by Lemma \ref{lem:g-bound},
\begin{equation*}
\begin{aligned}
\mathbf{E}_{(t,x)}\left[\delta_\e (X_r)\delta_\eta(X_s)\right]=&\int_\bR\int_\bR\delta_\e(y_1)\delta_\eta(y_2)g_\rho(s-r,y_1-y_2)g_\rho(t-s,x-y_2)\dd y_1\dd y_2\\
\leq&C (s-r)^{-\frac{1}{\rho}}(t-s)^{-\frac{1}{\rho}}.
\end{aligned}
\end{equation*}
Thus, by $\rho\in(\frac{1}{2H-1},2]$, Fubini's theorem and Lemma \ref{lem:a direct calculate},
$$\mathbf{E}_{(t,x)}[\langle\delta_\e(X_.),\delta_\eta(X_.)\rangle_\mathcal{H}]\leq 2C\int_0^t\int_0^s(s-r)^{2H-2-\frac{1}{\rho}}(t-s)^{-\frac{1}{\rho}}\dd r\dd s<+\infty.$$
Hence, $\delta_\e(X_.)\in\mathcal{H}$, $\fP_{(t,x)}$-a.s.\ for any fixed $\e>0$.

Then by the dominated convergence theorem, we have
\begin{equation}\label{eq:domin-con}
\begin{aligned}
\lim_{\e,\eta\rightarrow0}\mathbf{E}_{(t,x)}[\langle\delta_\e(X_.),\delta_\eta(X_.)\rangle_\mathcal{H}]&=2\int_0^t\int_0^sg_\rho(s-r,0)g_\rho(t-s,x)|s-r|^{2H-2}\dd r\dd s<\infty.
\end{aligned}
\end{equation}
Therefore, 
\begin{equation*}
\begin{aligned}
\mathbf{E}_{(t,x)}\left[\Vert\delta_\e(X_.)-\delta_\eta(X_.)\Vert_\mathcal{H}^2\right]=&\mathbf{E}_{(t,x)}\left[\langle\delta_\e(X_.)-\delta_\eta(X_.),\delta_\e(X_.)-\delta_\eta(X_.)\rangle_\mathcal{H}\right]
\\=&\mathbf{E}_{(t,x)}\left[\|\delta_\e(X_.)\|^2_\mathcal{H}\right]+\mathbf{E}_{(t,x)}\left[\|\delta_\eta(X_.)\|^2_\mathcal{H}\right]-2\mathbf{E}_{(t,x)}\left[\langle\delta_\e(X_.),\delta_\eta(X_.)\rangle_\mathcal{H}\right],
\end{aligned}
\end{equation*}
which tends to $0$ as $\e,\eta\to0$ by \eqref{eq:domin-con}. This shows that $\{\delta_\e(X_\cdot)\}_{\e>0}$ is a Cauchy sequence in $L^2(\mathfrak X;\mathcal H)$ and the proof is completed.
\end{proof}


 \begin{remark} \label{rem:self-energy}
Recalling \eqref{e:local-time} and \eqref{e:u-c-L},   when $\rho\in(\frac{1}{2H-1},2]$, by Proposition \ref{prop:delta-L2}, we can write $\dd L_s=\delta_0(X_s)\dd s$ where $\delta_0(X_\cdot)=\lim_{\e\to0}\delta_\e(X_\cdot) \in L^2(\mathfrak X; \mathcal H)$.    Then, we have that
\begin{align*}
&\sum_{k=1}^\infty m_k^2=\sum_{k=1}^\infty\langle e_i,\delta_0(X)\rangle_{\mathcal{H}}^2=\langle \delta_0(X),\delta_0(X)\rangle_{\mathcal{H}}=\int_0^t\int_0^t|r-s|^{2H-2}\dd L_r\dd L_s    
\end{align*}
is $L^1(\mathbf P_{t,x})$-integrable (see also Corollary~\ref{cor:XX}) and hence  is finite $\fP_{(t,x)}$-a.s.  On the other hand, when $\al+2H\le 2$,  Lemma \ref{lem:infty} yields that $\sum_{k=1}^\infty m_k^2=\infty$, $\fP_{(t,x)}$-a.s., given $L_t>0$. 
\end{remark}

\subsection{Revisiting the case \texorpdfstring{$\rho =2$}{}}   In Section \ref{Sec: Lp}, we showed that for $\rho=2$, our SHE \eqref{e:she} has a local $L^p$-solution for any $p\geq1$ by the classical Wiener chaos expansion. In this subsection, we provide a more explicit and concise expression than \eqref{e:second_moment} for the second moment of the solution to \eqref{e:she}, with the help of the mutual energy $\mE(\nu_L,\nu'_L)=\mE_*(\nu_L,\nu'_L)$. Our tool is Proposition \ref{prop:Z-convege-L2}, and we first need the following preliminary result.

\begin{lemma}\label{lem:mut-exp-eq}
Recall $\mathfrak{e}(m)$ from \eqref{e:mu-exp}. If $\fE^{\otimes2}_{(t,x)}[e^{\mE(\nu_L,\nu'_L)}]<\infty$, then $\mathfrak{e}(m)=\fE^{\otimes2}_{(t,x)}[e^{\mE(\nu_L,\nu'_L)}]$.
\end{lemma}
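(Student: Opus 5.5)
Write $\mE_n=\mE_n(m,m')=\sum_{k=1}^n m_km_k'$ and $\mE=\mE(\nu_L,\nu'_L)$. We need to show
\[
\lim_{n\to\infty}\fE^{\otimes2}_{(t,x)}\big[e^{\mE_n}\big]=\fE^{\otimes2}_{(t,x)}\big[e^{\mE}\big].
\]
The plan is to compare the two sides moment by moment, as in the proof of Proposition~\ref{prop:Zn-Z}, and let the Taylor coefficients carry the limit. Throughout, the role of $A_l$ is played by the whole space, and the hypothesis $\fE^{\otimes2}_{(t,x)}[e^{\mE}]<\infty$ supplies the integrability.

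\textbf{Step 1 (moments).} First record that $\mE\ge0$, since $\mE=\mE_*$ by Proposition~\ref{prop:al-mut-e}. The finiteness of $\fE^{\otimes2}_{(t,x)}[e^{\mE}]$ then gives $\fE^{\otimes2}_{(t,x)}[\mE^k]<\infty$ for all $k$. By \eqref{al-rho},
\[
\fE^{\otimes2}_{(t,x)}\big[\mE_n^k\big]=\big\|\mathbf 1_{\{1,\dots,n\}^k}\rho_{k,m}\big\|^2_{l^2(\bN^k)}.
\]
This quantity is nonnegative, nondecreasing in $n$, and converges to $\|\rho_{k,m}\|^2_{l^2}=\|\rho_{k,L}\|^2_{\cH^{\otimes k}}$ by \eqref{e:equal-inner}. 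Under the standing assumption for this subsection (take $\rho=2>\frac1H$, or simply $\rho\in(\frac1H,2]$), Lemma~\ref{lem:rhoKL-inH} and Proposition~\ref{al-converges}(2) identify this limit with $\fE^{\otimes2}_{(t,x)}[\mE^k]$. Hence, for every $k$,
\[
\fE^{\otimes2}_{(t,x)}\big[\mE_n^k\big]\uparrow \fE^{\otimes2}_{(t,x)}\big[\mE^k\big]\quad\text{as } n\to\infty .
\]
For odd $k$ the same identity shows that $\fE^{\otimes2}_{(t,x)}[\mE_n^k]\ge0$, even though $\mE_n$ itself may be negative.

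\textbf{Step 2 (exchange of sum and limit).} Expand $e^{\mE_n}=\sum_k \mE_n^k/k!$. To justify $\fE\big[\sum_k \mE_n^k/k!\big]=\sum_k\fE[\mE_n^k]/k!$, dominate the series by $e^{|\mE_n|}\le e^{\mE_n}+e^{-\mE_n}=2\cosh\mE_n$. The even moments satisfy $\fE[\mE_n^{2k}]\le\fE[\mE^{2k}]$, so
\[
\fE^{\otimes2}_{(t,x)}\big[\cosh\mE_n\big]\le \fE^{\otimes2}_{(t,x)}\big[\cosh\mE\big]\le \fE^{\otimes2}_{(t,x)}\big[e^{\mE}\big]<\infty .
\]
Therefore Fubini applies and
\[
\fE^{\otimes2}_{(t,x)}\big[e^{\mE_n}\big]=\sum_{k\ge0}\frac{1}{k!}\fE^{\otimes2}_{(t,x)}\big[\mE_n^k\big].
\]
Every term is nonnegative and nondecreasing in $n$, by Step 1. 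Monotone convergence for series then gives
\[
\lim_{n\to\infty}\fE^{\otimes2}_{(t,x)}\big[e^{\mE_n}\big]=\sum_{k\ge0}\frac{1}{k!}\fE^{\otimes2}_{(t,x)}\big[\mE^k\big]=\fE^{\otimes2}_{(t,x)}\big[e^{\mE}\big],
\]
where the last equality uses Tonelli and $\mE\ge0$. Since $\mathfrak e(m)$ is defined as this limit in \eqref{e:mu-exp}, this is the claim.

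\textbf{Main obstacle.} The delicate point is Step 1. We need each moment $\fE[\mE_n^k]$ to be \emph{nonnegative} and \emph{monotone} in $n$, and we need its limit to be identified with $\fE[\mE^k]$ for all $k$, not only even $k$. Nonnegativity and monotonicity come from the $l^2$-norm representation \eqref{al-rho}, which holds for every $k$. The identification of the limit needs $\rho_{k,L}\in\cH^{\otimes k}$ for all $k$, which Lemma~\ref{lem:rhoKL-inH} provides. If one prefers to avoid odd moments altogether, bound $e^{x}\le 2\cosh x$ as in \eqref{e:mu-exp-ineq} for the upper bound, and use Fatou along an a.s.-convergent subsequence of $\mE_n\to\mE$ (from the $L^k$ convergence in Proposition~\ref{al-converges}) for the lower bound. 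However, this version only yields $\fE[e^{\mE}]\le\mathfrak e(m)\le 2\fE[\cosh\mE]$, so the monotone-moment argument is the one to pursue for the exact equality.
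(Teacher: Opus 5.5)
Your proof is correct. It takes a somewhat different route from the paper at the last step.

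\textbf{The paper's route.} The paper expands only $\cosh\mE_n$, so it works with even moments alone. Using $\fE^{\otimes2}_{(t,x)}[\mE_n^{2k}]\le\fE^{\otimes2}_{(t,x)}[\mE^{2k}]$, it gets $\fE^{\otimes2}_{(t,x)}[\cosh\mE_n]\to\fE^{\otimes2}_{(t,x)}[\cosh\mE]$ by dominated convergence on the series. It then finishes by ``dominated convergence again'' for $e^{\mE_n}\le 2\cosh\mE_n$. Since that dominating function depends on $n$, this step is really a generalized dominated convergence (Pratt/Vitali) argument. It also needs $\mE_n\to\mE$ in probability, which comes from the $L^k$ convergence in Proposition~\ref{al-converges}.

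\textbf{Your route.} You expand $e^{\mE_n}$ fully. You note that the $l^2$ representation \eqref{al-rho} makes every moment, odd ones included, nonnegative and nondecreasing in $n$. Monotone convergence on the Taylor series then gives the exact limit, with no pathwise or in-probability convergence of $\mE_n$ needed. As by-products, you get that $\fE^{\otimes2}_{(t,x)}[e^{\mE_n}]$ is nondecreasing in $n$. You also get that the limit equals $\sum_k\|\rho_{k,L}\|^2_{\cH^{\otimes k}}/k!$, which is exactly the chaos sum computed in Proposition~\ref{prop:FK-moment}.

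\textbf{A correction to your closing remark.} The $\cosh$ route does not only give $\fE[e^{\mE}]\le\mathfrak e(m)\le 2\fE[\cosh\mE]$. Once $\fE[\cosh\mE_n]\to\fE[\cosh\mE]$ and $\cosh\mE_n\to\cosh\mE$ in probability, the family $\{\cosh\mE_n\}$ is uniformly integrable. Hence so is $\{e^{\mE_n}\}$, and Vitali's theorem yields the exact equality. That is how the paper's proof concludes.
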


\begin{proof}
The proof is similar to \eqref{e:mu-exp-ineq}. Note that $\mE(\nu_L,\nu'_L)$ is non-negative by \eqref{e:al*}, and for $x>0$, $\exp(x)/\cosh(x)\in[1,2]$. It then suffices to show $\fE_{(t,x)}^{\otimes2}[\cosh\mE(\nu_L,\nu'_L)]<+\infty$ and then apply the dominated convergence theorem.

Recall $\mE_n(\nu_L,\nu'_L)$ from \eqref{eq:alpha_n}. By Taylor expansion and Fubini's Theorem, we get
\begin{align*}
\mathbf{E}_{(t,x)}^{\otimes2}\left[\cosh\mE_n(\nu_L,\nu'_L)\right]=\mathbf{E}_{(t,x)}^{\otimes2}\left[\sum_{k=0}^\infty\frac{1}{(2k)!}\mE_{n}(\nu_L,\nu'_L)^{2k}\right]=\sum_{k=0}^\infty\frac{1}{(2k)!}\mathbf{E}_{(t,x)}^{\otimes2}\left[\mE_{n}(\nu_L,\nu'_L)^{2k}\right].
\end{align*}
By \eqref{al-rho}, we have that for any $k\in\bN$,
$$\mathbf{E}_{(t,x)}^{\otimes2}\left[\mE_{ n}(\nu_L,\nu'_L)^k\right]=\Vert\mathbf{1}_{\{1,\cdots,n\}^k}\rho_{k,m}\Vert^2_{l^2(\bN^k)}\le\Vert\rho_{k,m}\Vert^2_{l^2(\bN^k)}=\mathbf{E}_{(t,x)}^{\otimes2}\left[\mE(\nu_L,\nu'_L)^k\right].$$
Then by our assumption, it follows that
\begin{align*}
\mathbf{E}_{(t,x)}^{\otimes2}\left[\cosh \mE(\nu_L,\nu'_L)\right]\le\sum_{k=0}^\infty\frac{1}{k!}\mathbf{E}_{(t,x)}^{\otimes2}\left[\mE(\nu_L,\nu'_L)^k\right]
=\mathbf{E}_{(t,x)}^{\otimes2}\left[\exp\mE(\nu_L,\nu'_L)\right]<\infty.
\end{align*}
Hence, we can apply the dominated convergence theorem to get
\begin{align*}
\lim_{n\to\infty}\mathbf{E}_{(t,x)}^{\otimes2}\left[\cosh\mE_n(\nu_L,\nu'_L)\right]&=\lim_{n\to\infty}\sum_{k=0}^\infty\frac{1}{(2k)!}\mathbf{E}_{(t,x)}^{\otimes2}\left[\mE_{n}(\nu_L,\nu'_L)^{2k}\right]\\
&=\sum_{k=0}^\infty\frac{1}{(2k)!}\mathbf{E}_{(t,x)}^{\otimes2}\left[\mE(\nu_L,\nu'_L)^{2k}\right]=\mathbf{E}_{(t,x)}^{\otimes2}\left[\cosh \mE(\nu_L,\nu'_L)\right].
\end{align*}
Finally, we have that by the dominated convergence theorem again, 
\begin{equation*}
\begin{aligned}
\mathfrak{e}(m)&=\lim_{n\to\infty}\mathbf{E}_{(t,x)}^{\otimes2}\left[\exp\mE_n(\nu_L,\nu'_L)\right]
=\mathbf{E}_{(t,x)}^{\otimes2}\left[\exp\mE(\nu_L,\nu'_L)\right].
\end{aligned}
\end{equation*}
\end{proof}
The following proposition is the main result of this subsection.
\begin{proposition}\label{prop:FK-moment} When $\rho=2$ and  $t>0$ is small enough,  for the second-moment of the mild solution to \eqref{e:she}, we have 
\begin{equation}\label{e:FK_2}
\bE\big[u(t,x)^2\big]=\fE_{(t,x)}^{\otimes2}\left[\exp \sum_{k=1}^\infty m_k m'_k\right]=\fE_{(t,x)}^{\otimes2}\left[\exp{\left\{\int_0^t\int_0^t|r-s|^{2H-2}\dd L_r\dd L'_s \right\}}\right]<\infty.   
\end{equation}
\end{proposition}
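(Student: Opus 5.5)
The plan is to reduce the statement to Proposition~\ref{prop:Z-convege-L2} and Lemma~\ref{lem:mut-exp-eq}. With $\rho=2$ we have $\rho>\frac1H$ because $H>\frac12$. Hence Lemma~\ref{lem:rhoKL-inH} gives $\rho_{k,L}\in\mathcal H^{\otimes k}$ for every $k$. Proposition~\ref{prop:al-mut-e} then shows that $\mE(\nu_L,\nu'_L)=\lim_n\sum_{k\le n}m_km'_k$ exists in every $L^k(\fP^{\otimes2}_{(t,x)})$ and equals $\mE_*(\nu_L,\nu'_L)=\int_0^t\int_0^t|r-s|^{2H-2}\dd L_r\dd L'_s$. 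So the second equality in \eqref{e:FK_2} is an identification of the same random variable, and the only real content is the finiteness of its exponential moment for small $t$.

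For finiteness, I would expand the exponential as a power series and compute term by term. By Proposition~\ref{al-converges}(2) and \eqref{e:rho-kL},
\[
\fE^{\otimes2}_{(t,x)}\big[e^{\mE(\nu_L,\nu'_L)}\big]=\sum_{k\ge0}\frac{1}{k!}\|\rho_{k,L}\|^2_{\mathcal H^{\otimes k}}=\sum_{k\ge0}\frac{(k!)^2}{k!}\|f_k(t,x)\|^2_{\mathcal H^{\otimes k}}=\sum_{k\ge0}k!\,\|f_k(t,x)\|^2_{\mathcal H^{\otimes k}} .
\]
The interchange of sum and expectation is justified by monotone convergence, since $\mE\ge0$. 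The right-hand side is exactly the chaos series \eqref{e:2nd-moment}. The estimate \eqref{e:2-moment-leq} from Proposition~\ref{prop:local-L^2} bounds its terms by $C^k\sqrt k\,t^{k(2H-1)}$, so the series converges for $t$ small, namely $t<t_c$. This is the step I expect to need the most care, but the work has already been done in Proposition~\ref{prop:local-L^2}; what remains is to check that the identity $\rho_{k,L}=k!f_k$ matches the normalisation.

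Given this finiteness, Lemma~\ref{lem:mut-exp-eq} yields $\mathfrak e(m)=\fE^{\otimes2}_{(t,x)}[e^{\mE(\nu_L,\nu'_L)}]<\infty$. By \eqref{e:mu-exp} this equals $\lim_n\fE^{\otimes2}_{(t,x)}[\exp\sum_{k\le n}m_km'_k]$. Since $\sum_{k\le n}m_km'_k\to\mE$ in probability, and the terms are uniformly integrable by the same cosh/Taylor domination, this limit is $\fE^{\otimes2}_{(t,x)}[\exp\sum_{k=1}^\infty m_km'_k]$.

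Finally, Proposition~\ref{prop:Z-convege-L2} shows that $Z_n(t,x)\to Z$ in $L^2(\bP)$ with $\bE[Z^2]=\mathfrak e(m)$. In particular the convergence holds in $L^1$, so by Proposition~\ref{prop:un-u} $Z$ is the (unique) solution $u(t,x)$. Moreover, $u$ coincides with the $L^2$ chaos solution of Proposition~\ref{prop:local-L^2}, by the uniqueness in Proposition~\ref{prop:finit n} and Proposition~\ref{prop:exis-uniq-u-n}. Therefore $\bE[u(t,x)^2]=\mathfrak e(m)$, which gives all three expressions in \eqref{e:FK_2}.
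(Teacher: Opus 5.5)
Your proposal is correct and follows essentially the same route as the paper. Both arguments prove finiteness of $\fE^{\otimes2}_{(t,x)}[e^{\mE(\nu_L,\nu'_L)}]$ by Taylor expansion together with $\rho_{k,L}=k!f_k$ and the chaos bound \eqref{e:2-moment-leq}, and then conclude with Lemma~\ref{lem:mut-exp-eq} and Proposition~\ref{prop:Z-convege-L2}. The one difference is in the first equality, where the paper cites the Feynman--Kac formula \eqref{e:FK} and you instead identify $u(t,x)$ with the $L^2$-limit $Z$ via Proposition~\ref{prop:un-u} and the uniqueness results. Your justification is the more careful one, since \eqref{e:FK} is only established for $\rho\in(\frac{1}{2H-1},2]$, which at $\rho=2$ requires $H>\frac34$; indeed your series computation already gives $\fE^{\otimes2}_{(t,x)}[e^{\mE}]=1+\sum_{k\ge1}k!\|f_k(t,x)\|^2_{\mathcal H^{\otimes k}}=\bE[u(t,x)^2]$ directly by \eqref{e:2nd-moment}.
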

\begin{proof}
The first equality in \eqref{e:FK_2} follows by a direct second moment computation with the help of the Feynman-Kac formula \eqref{e:FK}. For the second equality, by Proposition \ref{prop:Z-convege-L2}, it suffices to show that $\fE^{\otimes2}_{(t,x)}[e^{\mE(\nu_L,\nu'_L)}]<\infty$, and then the result follows from Lemma \ref{lem:mut-exp-eq} and the monotone convergence theorem.


By Taylor expansion and Tonelli's theorem,
\begin{align*}
\fE^{\otimes2}_{(t,x)}\left[\exp\mE(\nu_L,\nu'_L)\right]&=1+\sum\limits_{n=1}^\infty\frac{1}{n!}\fE^{\otimes2}_{(t,x)}\left[\mE(\nu_L,\nu'_L)^n\right]\\
&=1+\sum_{n=1}^\infty\frac{1}{n!}\fE_{(t,x)}^{\otimes2}\left[\left(\int_0^t\int_0^t|r-s|^{2H-2}\dd L_r\dd L'_s \right)^n\right]\\
&=1+\sum_{n=1}^\infty \frac{1}{n!}\int_{[0,t]^n}\int_{[0,t]^n}\prod_{i=1}^n|s_i-s_i'|^{2H-2}\rho_{n,L}(\boldsymbol{s})\rho_{n,L'}(\boldsymbol{s}')\dd \boldsymbol{s}\dd \boldsymbol{s}'\\
&=1+\sum_{n=1}^\infty n!\int_{[0,t]^n}\int_{[0,t]^n}\prod_{i=1}^n|s_i-s_i'|^{2H-2}f_n(t,x;\boldsymbol{s})f_n(t,x;\boldsymbol{s}')\dd \boldsymbol{s}\dd \boldsymbol{s}' 
\\
&=1+\sum_{n=1}^\infty n!\Vert f_n(t,x)\Vert_{\mathcal{H}^{\otimes n}}^2<+\infty,
\end{align*}
where the fourth equation follows from \eqref{e:rho-kL}, and the finiteness is due to Proposition \ref{prop:local-L^2} for $t\in(0,t_c)$.
\end{proof}

\begin{remark}
For $\rho\in(\frac{1}{2H-1},2]$, the Feynman-Kac formula \eqref{e:FK} of the solution to~\eqref{e:she}  holds for all $t>0$. Thus, the first equality in \eqref{e:FK_2} also holds for all $t>0$, which though may be infinite. 
\end{remark}

\subsection{Discussion on Stratonovich solution} 


Suppose that a Stratonovich solution $u(t,x)$ of \eqref{e:she} exists. Then, the Feynman-Kac formula suggests
$$u(t,x)=\fE_{(t,x)}\left[\exp{\left\{\int_0^t \delta_0(X_s)\xi(t-s)\dd s\right\}}\right].$$

For simplicity, we may assume $x=0$. By Taylor expansion and Tonelli's theorem,
\begin{equation}\label{e:stra_lower}
\begin{split}
\E[u(t,0)]
&=\fE_{(t,0)}  \left[\exp{\left\{\frac12\int_0^t \int_0^t \delta_0(X_r) \delta_0(X_s) |s-r|^{2H-2} \dd r\dd s\right\}}\right]\\
&=\sum_{n=0}^\infty \frac{1}{2^n n!} \fE_{(t,0)}\left[\left(\int_0^t \int_0^t \delta_0(X_r) \delta_0(X_s) |s-r|^{2H-2} \dd r\dd s\right)^n\right]\\
&=\sum_{n=0}^\infty\frac{1}{2^n n!}\sum_{\sigma\in\mathscr{P}_{2n}}\int_{[0,t]^{2n}_<}C^{2n}s_1^{-\frac{1}{\rho}}\prod\limits_{k=2}^{2n}(s_k-s_{k-1})^{-\frac{1}{\rho}}\prod\limits_{i=1}^n\frac{\mathrm{d}s_{2i-1}\mathrm{d}s_{2i}}{|s_{\sigma(2i)}-s_{\sigma(2i-1)}|^{2-2H}}\\
&\geq\sum_{n=0}^\infty\frac{t^{(2H-2)n}}{2^n n!}\sum_{\sigma\in\mathscr{P}_{2n}}\int_{[0,t]^{2n}_<}C^{2n}s_1^{-\frac{1}{\rho}}\prod\limits_{k=2}^{2n}(s_k-s_{k-1})^{-\frac{1}{\rho}}\dd\boldsymbol{s}\\
&=\sum_{n=0}^\infty\frac{t^{2n(\alpha+H-1)n}C^{2n}(2n)!}{2^n n!\Gamma(2n\al+1)},
\end{split}
\end{equation}
where we have also used Lemma~\ref{lem:L-property} in the third equality and Lemma \ref{lem:a direct calculate} in the last equality.  By Stirling's approximation, the right-hand side, which is a lower bound of $\E [u(t,0)]$  is finite if $\alpha>\frac12$ or $\alpha=\frac12$ with $t>0$ sufficiently small.

On the other hand, by the same strategy in \cite[Lemma 4.1]{lswz}, we can adjust the kernel $|s_{\sigma(2i)}-s_{\sigma(2i-1)}|^{2H-2}$ in the third line  of \eqref{e:stra_lower} so that the integrand therein is bounded above by the product that consists of a factor $C^{2n}$, $n$ factors of the form $|s_k-s_{k-1}|^{-1/\rho}$, and another $n$ factors of the form $|s_k-s_{k-1}|^{2H-1/\rho-2}$. Then by Lemma \ref{lem:a direct calculate} again, we have that
\begin{equation*}
\bE[u(t,0)]\leq\sum\limits_{n=0}^\infty\frac{C^{2n}(2n)!}{2^n n!\Gamma(2n(\alpha+H-1))},
\end{equation*}
which is finite if $\alpha+H>\frac32$ by Stirling's approximation. Note that $\alpha+H>\frac32$ is stronger than $\alpha+2H>2$ since $H\in(\frac12,1)$.

In light of the above analysis, under our assumptions $\alpha\leq\frac12$ and $\alpha+2H>2$, an $L^1$-Stratonovich solution can only be considered in the special case $\alpha=\frac12$, and even then only for sufficiently small time $t>0$. We therefore do not pursue this case, as it falls outside the main scope of this paper, which is already quite lengthy.

\section{0-1 law and strict positivity of the Skorohod solution}\label{sec:0-1}
Recall $Z_n=Z_n(t,x)$ from \eqref{e:unt}. The limit $Z=Z(t,x):=\lim_{n\to\infty}Z_n$ always exists since $Z_n$ is a nonnegative martingale. In this section, we show that $Z$ satisfies a 0-1 law, i.e., $\bP(Z>0)\in\{0,1\}$. In particular, by Theorem \ref{thm-well-posedness}, $Z=u(t,x)$ is an $L^1$-solution to the SHE \eqref{e:she} when $\alpha+2H>2$, and we show that in this case $\bP(Z>0)=1$, that is, the solution is non-trivial.


\begin{proposition}[0--1 law]\label{prop:0-1} For any $(t,x)\in(0,\infty)\times \mathbb R$, we have that $\bP(Z(t,x)>0)\in\{0,1\}$.
\end{proposition}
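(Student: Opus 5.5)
The plan is to show that the event $\{Z(t,x)>0\}$ is a tail event for the i.i.d.\ standard Gaussian sequence $(\xi_k)_{k\in\bN}$, up to a $\bP$-null set. Kolmogorov's 0--1 law then gives $\bP(Z>0)\in\{0,1\}$.

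\textbf{Step 1: factor the shift.} Fix $N\in\bN$. For $n>N$ write
\[
M_n(t)=\exp\Big\{\sum_{k=1}^N\big(m_k\xi_k-\tfrac12 m_k^2\big)\Big\}\cdot \exp\Big\{\sum_{k=N+1}^n\big(m_k\xi_k-\tfrac12 m_k^2\big)\Big\}=:E_N\cdot M_n^{(N)}.
\]
Set $Z_n^{(N)}:=\fE_{(t,x)}[M_n^{(N)}]$. This is a nonnegative martingale in $n$ that depends only on $(\xi_k)_{k>N}$, so its almost-sure limit $Z^{(N)}$ is measurable with respect to $\sigma(\xi_k:k>N)$.

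\textbf{Step 2: sandwich $Z_n$ between multiples of $Z_n^{(N)}$.} We need $0<c\le E_N\le C<\infty$. The difficulty is that $m_k=\int_0^t\tilde e_k\,\dd L_s$ is random through $L_t$, which is unbounded. Since $|m_k|\le\|\tilde e_k\|_\infty L_t$, on the event $\{L_t\le l\}$ we have, for fixed $\xi_1,\dots,\xi_N$,
\[
\exp\{-C_N(\xi)(l+l^2)\}\le E_N\le \exp\{C_N(\xi)\,l\}.
\]
I would therefore localize, using the restricted martingales $Z_n(\1_{B}m)$ with $B=\{L_t\le l\}$ in the spirit of Proposition~\ref{prop:Z_n-partition}(3). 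The bound $\bE|Z_n(m)-Z_n(\1_B m)|\le 2\fP_{(t,x)}(B^c)$ alone is too weak, since it does not control positivity. Instead I work pathwise. Put $Z_n^{(N),l}:=\fE_{(t,x)}[\1_{\{L_t\le l\}}M_n^{(N)}]$, which is again a nonnegative martingale in $n$ with limit $Z^{(N),l}$ measurable with respect to $\sigma(\xi_k:k>N)$. Then, almost surely,
\[
c_l(\xi_{[N]})\,Z^{(N),l}\le Z\le C_l(\xi_{[N]})\,Z^{(N),l}+\lim_n \fE_{(t,x)}\big[\1_{\{L_t>l\}}M_n\big].
\]
The lower bound gives $\{Z^{(N),l}>0\}\subset\{Z>0\}$ for every $l$.

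\textbf{Step 3: the reverse inclusion, which I expect to be the main obstacle.} We need $\{Z>0\}\subset\bigcup_l\{Z^{(N),l}>0\}$ up to a null set. The remainder $R_l:=\lim_n\fE_{(t,x)}[\1_{\{L_t>l\}}M_n]$ is the limit of a nonnegative martingale with $\bE R_l\le\fP_{(t,x)}(L_t>l)\to0$. Moreover $R_l$ is monotone decreasing in $l$, so $R_l\downarrow0$ almost surely. On $\{Z>0\}$ this forces $Z-R_l>0$ for large $l$, and the upper bound then gives $Z^{(N),l}>0$.

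It follows that $\{Z>0\}=\bigcup_l\{Z^{(N),l}>0\}$ almost surely. The right-hand side lies in $\sigma(\xi_k:k>N)$ up to null sets. Since $N$ is arbitrary, $\{Z>0\}$ belongs to the completed tail $\sigma$-field, and Kolmogorov's 0--1 law concludes.

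The delicate point is the interchange of the $n$-limit with the decomposition $\1_{\{L_t\le l\}}+\1_{\{L_t>l\}}$. Each piece is itself a nonnegative martingale in $n$, so the limits exist separately and add up to $Z$. If monotonicity in $l$ turns out to fail pathwise, I would pass to the sequence $l=2^j$ and use Borel--Cantelli with $\bE R_{2^j}\le\fP(L_t>2^j)$, after choosing a subsequence that makes these probabilities summable.
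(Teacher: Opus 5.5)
Your argument is correct. It reaches the same conclusion as the paper's proof by a different localization: $\{Z>0\}$ agrees a.s.\ with an event in $\sigma(\xi_k:k>N)$ for every $N$, and then Kolmogorov's 0--1 law applies.

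\textbf{How the paper does it.} The paper never truncates the path. For one inclusion it uses the universal bound $\sup_{m}M_k\le C_{k,\xi}$, coming from $m\xi-\tfrac12 m^2\le\tfrac12\xi^2$. This gives $\{\lim_n\fE_{(t,x)}[Y_{k,n}]=0\}\subset\{Z=0\}$. The same bound is all you need for your upper bound on $E_N$, so localization is only required for the lower bound. For the converse, the paper slices according to the value of the head factor, $\{\frac1m\le M_k<\frac1{m-1}\}$. This forces an interchange of $\lim_M$ and $\lim_n$. The interchange is justified by three ingredients:
\begin{itemize}
\item Doob's maximal inequality for the martingales $S_{M,n}=\fE_{(t,x)}[\1_{\{0<M_k<1/M\}}Y_{k,n}]$;
\item a uniform-convergence lemma (Lemma~\ref{lem:change-order});
\item the fact that $\fP_{(t,x)}(M_k\in\{0,\infty\})=0$.
\end{itemize}

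\textbf{How yours differs.} You localize on the path event $\{L_t\le l\}$. This gives two-sided bounds on $E_N$ that depend only on $\xi_{[N]}$, so both inclusions hold directly for the truncated martingales $Z_n^{(N),l}$. The only thing left is the remainder $R_l$. You dispose of it with monotonicity in $l$ and Fatou, via $\bE R_l\le\fP_{(t,x)}(L_t>l)\to0$. No maximal inequality or limit interchange is needed.

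Monotonicity of $R_l$ does hold pathwise: for each $n$ and $\xi$, $\fE_{(t,x)}[\1_{\{L_t>l\}}M_n]$ is decreasing in $l$. Hence the a.s.\ limits are decreasing along integer $l$ on a common full-measure set, and your Borel--Cantelli fallback is unnecessary.

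\textbf{What each buys.} The paper gets an explicit, truncation-free description of the null event: $\{Z=0\}=\{\lim_n\fE_{(t,x)}[Y_{k,n}]=0\}$ a.s.\ for each $k$. You get a shorter and more elementary proof. Your use of $|m_k|\le\|\tilde e_k\|_\infty L_t$ costs no generality. Localizing on $\{\max_{k\le N}|m_k|\le l\}$ works equally well and uses only the finiteness of the $m_k$, which is exactly what the paper's argument relies on.
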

\begin{proof}
Our strategy is to show that $\{Z=0\}$ differs from a tail event $A$ (see \eqref{def:tail_event}) with respect to the filtration $\{\cF_n\}_{n\in\bN}$ by at most a set of zero $\bP$-probability, where $\cF_n=\sigma(\xi_1,\cdots,\xi_n)$. Note that $Z(t,x)=\lim_{n\to\infty}\fE_{(t,x)}[M_k Y_{k,n}]$, where $k\in\bN$ is fixed and arbitrary, $M_k$ is given in \eqref{e:Mnt}, and 
\begin{equation}\label{def:M_Y}
Y_{k,n}=\exp\Bigg\{\sum_{i=k+1}^n\Bigg(m_{i}\xi_i-\frac{m_{i}^2}{2}\Bigg)\Bigg\}.
\end{equation}

For any fixed $k$ and $\bP$-a.s.\ noise $\xi$, $C=C_{k,\xi}:=\sup_{m_1,\cdots,m_k}M_k<+\infty$ by $\exp(x\xi-\frac12x^2)<+\infty$. Hence, by the dominated convergence theorem, we have that $\bP$-a.s.,
\begin{equation}\label{eq:one_side_bound}
\lim_{n\to\infty}\fE_{(t,x)}[Y_{k,n}]=0\implies \lim_{n\to\infty}\fE_{(t,x)}[CY_{k,n}]=0\implies Z=\lim_{n\to\infty}\fE_{(t,x)}[M_kY_{k,n}]=0.
\end{equation}
Denote
\begin{equation}\label{def:tail_event}
\Xi_1:=\Big\{\xi:\sup_{m_1,\cdots,m_k}M_k<+\infty, \forall k\in\bN\Big\}\quad\text{and}\quad A=\bigcap_{k=1}^\infty\Big\{\lim_{n\to\infty}\fE_{(t,x)}[Y_{k,n}]=0 \Big\},
\end{equation}
where that $\bP(\Xi_1)=1$ and $A$ is a tail event with respect to $\cF_n$. Then \eqref{eq:one_side_bound} implies that $A\cap\Xi_1\subset\{Z=0\}\cap\Xi_1$.

Hence, if we can show that for some $\Xi_2$ with $\bP(\Xi_2)=1$, $\{Z=0\}\cap\Xi_2\subset A\cap\Xi_2$, then we can conclude the result by $\bP(Z=0)=\bP(A)\in\{0,1\}$, thanks to Kolmogorov's 0--1 law. It suffices to show that on $\Xi_2$, for any fixed $k\in\bN$,
\begin{equation}\label{e:converse}
Z=\lim\limits_{n\to\infty}\fE_{(t,x)}[M_k Y_{k,n}]=0\implies\lim_{n\to\infty}\fE_{(t,x)}[Y_{k,n}]=0.
\end{equation}

To proceed, we first establish the following lemma.
\begin{lemma}\label{exchange}
Let
\begin{equation*}
\begin{split}
a_{m,n}&=a_{m,n}^{(k)}:=\fE_{(t,x)}\Big[\mathbf{1}_{\{\frac1m\leq M_k<\frac{1}{m-1}\}} Y_{k,n}\Big],\\
S_{M,n}&=S_{M,n}^{(k)}:=\sum_{m=M+1}^\infty a_{m,n}^{(k)}=\fE_{(t,x)}\Big[\mathbf{1}_{\{0< M_k<\frac{1}{M}\}} Y_{k,n}\Big].
\end{split}
\end{equation*}
Then we have that $\bP$-a.s., 
\begin{equation}\label{e:interchange-limit}  \lim_{M\to\infty}\lim_{n\to\infty}\sum_{m=1}^M a_{m,n}=\lim_{n\to\infty}\lim_{M\to\infty}\sum_{m=1}^M a_{m,n}.
\end{equation}
\end{lemma}
\begin{proof}
Note that \eqref{e:interchange-limit} is a stochastic version of switching the order of limits. By Lemma \ref{lem:change-order}, it suffices to show that $\bP$-a.s., $\lim_{M\to\infty}\sup_{n\geq k+1} S_{M,n}=0$ for all $k\in\bN$, which is the classical condition of uniform convergence.

For fixed $k,M\in\bN$, $S_{M,n}$ is a martingale for $n\geq k+1$ with respect to the filtration $\cF^{(k)}_n:=\sigma(\xi_{k+1},\cdots,\xi_n)$. Hence, for any $N\geq k+1$ and $\varepsilon>0$, by Doob's maximum inequality,
\begin{equation*}
\bP \Big(\sup\limits_{k+1\leq n\leq N}S_{M,n}>\e\Big)\leq\frac{1}{\e}\bE [S_{M,N}]=\frac{1}{\e}\bE [S_{M,k+1}]=\frac{1}{\e}\fE_{(t,x)}\Big[\bP\Big(0<M_k<\frac{1}{M}\Big)\Big].
\end{equation*}
Let $N\to\infty$ and by the monotone convergence theorem, we have that
\begin{equation*}
\bP \Big(\sup\limits_{n\geq k+1}S_{M,n}>\e\Big)\leq\frac{1}{\e}\fE_{(t,x)}\Big[\bP\Big(0<M_k<\frac{1}{M}\Big)\Big].
\end{equation*}
Recalling the expression of $M_k$ from \eqref{e:Mnt} and by the well-posedness of $m_i$ therein (see Section~\ref{sec: L1-sol}), let $M\to\infty$ and we have that $\sup_{n\geq k+1}S_{M,n}$ converges to $0$ in $\bP$-probability for any fixed $k\in\bN$. Also note that $S_{M,n}$ is decreasing in $M$ and so is $\sup_{n\geq k+1}S_{M,n}$. Hence, by the monotone convergence theorem, $\lim_{M\to\infty}\sup_{n\geq k+1}S_{M,n}=0$, $\bP$-a.s.
\end{proof}
Now we prove \eqref{e:converse}. Note that for all $m\geq 1$,
\begin{equation*}
\fE_{(t,x)}[M_k Y_{k,n}]\geq\fE_{(t,x)}\Big[\mathbf{1}_{\{\frac1m\leq M_k<\frac{1}{m-1}\}}M_k Y_{k,n}\Big]\geq\frac{1}{m}\fE_{(t,x)}\Big[\mathbf{1}_{\{\frac1m\leq M_k<\frac{1}{m-1}\}} Y_{k,n}\Big],
\end{equation*}
where we use the convention $1/0:=+\infty$. Denote by $\Xi_3$ with $\mathbb P(\Xi_3)=1$ the set where the equality \eqref{e:interchange-limit} holds. Then on $\{Z=0\}\cap\Xi_3$, we have, for all $m\geq1$,
\begin{equation*}
\lim_{n\to\infty } \fE_{(t,x)}\Big[\mathbf{1}_{\{\frac1m\leq M_k<\frac{1}{m-1}\}} Y_{k,n}\Big]=0,
\end{equation*}
which implies that for all $M\geq 1$,
\begin{equation*}
0=\sum_{m=1}^M\lim_{n\to\infty } \fE_{(t,x)}\Big[\mathbf{1}_{\{\frac1m\leq M_k<\frac{1}{m-1}\}} Y_{k,n}\Big]=\lim_{n\to\infty}\fE_{(t,x)}\Big[\mathbf{1}_{\{\frac1M\leq M_k<\infty\}} Y_{k,n}\Big].
\end{equation*}
By  Lemma~\ref{exchange} and the monotone convergence theorem, we get 
\begin{equation*}
\begin{split}
0&=\lim\limits_{M\to\infty}\lim_{n\to\infty}\fE_{(t,x)}\Big[\mathbf{1}_{\{\frac1M\leq M_k<\infty\}} Y_{k,n}\Big]=\lim_{n\to\infty}\lim\limits_{M\to\infty}\fE_{(t,x)}\Big[\mathbf{1}_{\{\frac1M\leq M_k<\infty\}}Y_{k,n}\Big]\\
&=\lim\limits_{n\to\infty}\fE_{(t,x)}\Big[\mathbf{1}_{\{0< M_k<\infty\}} Y_{k,n}\Big].
\end{split}
\end{equation*}
Finally, as in the proof for Lemma \ref{exchange}, the well-posedness of $m_i$ in $M_k$ yields $\fP_{(t,x)}(M_k=0)=\fP_{(t,x)}(M_k=\infty)=0$, $\bP$-a.s. Hence, there exists a set $\Xi_4$ with $\bP(\Xi_4)=1$, such that
\begin{equation}
0=\lim\limits_{n\to\infty}\fE_{(t,x)}\Big[\mathbf{1}_{\{0< M_k<\infty\}} Y_{k,n}\Big]=\lim\limits_{n\to\infty}\fE_{(t,x)}[Y_{k,n}].
\end{equation}
Take $\Xi_2=\Xi_3\cap\Xi_4$ and we conclude that $\{Z=0\}\cap\Xi_2\subset A\cap\Xi_2$, which completes the proof.
\end{proof}

A direct corollary is the following proposition.

\begin{proposition}[Strict positivity]\label{prop: positivity} If $\rho\in(\frac{1}{2H-1},2]\Longleftrightarrow\alpha+2H>2$ and $\alpha\leq\frac12$, then for all $(t,x)\in(0,\infty)\times \mathbb R$, we have $\bP (Z>0)=1$.
\end{proposition}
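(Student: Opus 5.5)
Combining the 0--1 law of Proposition~\ref{prop:0-1} with the first-moment information supplied by the $L^1$-convergence already established should be enough. By Proposition~\ref{prop:0-1}, for every $(t,x)\in(0,\infty)\times\R$ we have $\bP(Z(t,x)>0)\in\{0,1\}$. It is therefore enough to exclude the case $\bP(Z(t,x)>0)=0$, that is, to show that $Z(t,x)$ is not almost surely zero.

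The first step uses Proposition~\ref{prop:Zn-Z}: under the hypothesis $\rho\in(\frac1{2H-1},2]$ (equivalently $\alpha+2H>2$, $\alpha\le\frac12$), $Z_n(t,x)\to Z(t,x)$ in $L^1(\bP)$. Item (1) of Proposition~\ref{prop:Z_n-partition} gives $\bE[Z_n]=1$ for every $n$. Passing to the limit, $\bE[Z(t,x)]=\lim_{n\to\infty}\bE[Z_n(t,x)]=1$. Since $Z\ge 0$ (it is an a.s.\ limit of the nonnegative $Z_n$), $\bE[Z]=1$ forces $\bP(Z>0)>0$. The 0--1 law then upgrades this to $\bP(Z>0)=1$.

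The argument needs no computation. The only point to check is that Proposition~\ref{prop:0-1} is stated for the same object $Z=\lim_n Z_n$ and for all $(t,x)$, with no restriction on $x$, and that Proposition~\ref{prop:Zn-Z} holds for every $(t,x)$ with $t>0$. For $x\neq0$ the measure $\fP_{(t,x)}$ is still a well-defined bridge-type law, and Corollary~\ref{cor:XX} already covers general $x$, so both inputs apply pointwise. The substantive difficulty, the 0--1 law itself, was handled earlier, so I do not expect a real obstacle here. The one subtlety is that $\bE[Z]=1$ requires genuine $L^1$-convergence, since almost sure convergence alone would only give $\bE[Z]\le1$ by Fatou's lemma; this is exactly why the hypothesis $\alpha+2H>2$ is needed.
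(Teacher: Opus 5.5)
Your proposal is correct and is the same argument as the paper's. The paper likewise combines $\bE[Z]=1$, which follows from the $L^1$-convergence in Proposition~\ref{prop:Zn-Z} together with $\bE[Z_n]=1$, with the 0--1 law of Proposition~\ref{prop:0-1} to conclude $\bP(Z>0)=1$.
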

\begin{proof}
The desired result follows from the fact $\E [Z]=1$ when $\rho\in(\frac{1}{2H-1},2]$ and Proposition~\ref{prop:0-1}.
\end{proof}

Recall $M_k$ and $m_i$ from \eqref{e:Mnt} and \eqref{e:mkt}. Note that for any fixed path of $X_t$, as long as $|m_i|<\infty$ for all $i\in\bN$, $M_k$ is a non-negative martingale with respect to $\cF_k=\sigma(\xi_1,\cdots,\xi_k)$. By the martingale convergence theorem, the limit $J=J(X_{\cdot}):=\lim_{k\to\infty}M_k$ exists $\fP_{(t,x)}$-a.s.. We have the following result for the fractional moments of $J$ when $\alpha+2H\leq2$. 
\begin{proposition}\label{prop:J=0}
 Assume $\rho\in(1,\frac{1}{2H-1}]\Longleftrightarrow\alpha+2H\le 2$. Then,  given that $L_t>0$,  we have $\bE \left[J^p\right]=0$, $\fP_{(t,x)}$-a.s.,  for all $p\in(0,1)$.
\end{proposition}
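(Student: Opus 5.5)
We fix a path $X$ (equivalently, a realization of the local time $L$) with $L_t>0$. By Lemma~\ref{lem:infty}, for $\fP_{(t,x)}$-almost every such path the self-energy is infinite:
\[
\sum_{k=1}^\infty m_k^2=I_t=\infty .
\]
(Our $m_k$ depend only on $X$, so this is a deterministic statement once the path is frozen.) Conditionally on $X$, the quantity
\[
M_n=\exp\Big\{\sum_{k\le n}\big(m_k\xi_k-\tfrac12 m_k^2\big)\Big\}
\]
is the classical exponential martingale built from the deterministic sequence $(m_k)$ and the i.i.d.\ standard Gaussians $\xi_k$. The claim then reduces to a Kakutani-type dichotomy: if $\sum m_k^2=\infty$, the limit $J$ vanishes $\bP$-a.s., and so $\bE[J^p]=0$ for all $p\in(0,1)$.

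To prove this, we compute fractional moments directly. For fixed $p\in(0,1)$ and a deterministic sequence $(m_k)$, independence of the $\xi_k$ gives
\[
\bE[M_n^p]=\prod_{k=1}^n \bE\big[e^{p m_k\xi_k-\frac p2 m_k^2}\big]=\exp\Big\{-\frac{p(1-p)}{2}\sum_{k=1}^n m_k^2\Big\}.
\]
Since $M_n\to J$ $\bP$-a.s., Fatou's lemma yields
\[
\bE[J^p]\le\liminf_{n\to\infty}\bE[M_n^p]=\exp\Big\{-\tfrac{p(1-p)}2 I_t\Big\}=0
\]
whenever $I_t=\infty$. This already gives $\bE[J^p]=0$, and in particular $J=0$ $\bP$-a.s. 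The argument also works in the other direction: $I_t<\infty$ would give $\bE[J^p]=e^{-p(1-p)I_t/2}>0$. This is consistent with the threshold in Lemma~\ref{lem:infty}, since $I_t<\infty$ a.s.\ for $\alpha+2H>2$.

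The main point to check carefully is measurability and the order of quantifiers. We must ensure that the exceptional $\fP_{(t,x)}$-null set, namely paths with $L_t>0$ but $I_t<\infty$, does not depend on the choice of orthonormal basis $\{e_k\}$. This is settled by the identity
\[
\sum_k m_k^2=\int_0^t\int_0^t |r-s|^{2H-2}\,\dd L_r\,\dd L_s
\]
when the double integral is finite, which follows from Parseval in $\cH$ applied to the measure $\nu_L$, as in Remark~\ref{rem:self-energy}. The delicate direction is showing $\sum_k m_k^2=\infty$ whenever the self-energy $I_t$ is infinite, since $\nu_L$ might not belong to $\cH$. For this we argue by contradiction. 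If $\sum_k m_k^2<\infty$, then $\nu_L$ defines a bounded functional on $\cH$ with finite norm. Approximating $\nu_L$ by the mollified measures $\nu_{L,\e}$ from Lemma~\ref{lem:L-measure}, whose $\cH$-norms are monotone-limit controlled through the positive-definite kernel representation \eqref{eq:decomp_int}, then forces $I_t<\infty$. This contradicts Lemma~\ref{lem:infty}.

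With that identification in hand, Fubini over the product space $\mathfrak X\times\Xi$ shows that $(\omega,\xi)\mapsto J$ is jointly measurable, and the conclusion holds $\fP_{(t,x)}$-a.s.\ on $\{L_t>0\}$.
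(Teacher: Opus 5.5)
Your core argument (freeze the path, compute $\bE[M_n^p]=\exp\{-\frac{p(1-p)}{2}\sum_{k\le n}m_k^2\}$, apply Fatou) is exactly the paper's proof. The paper takes $\sum_k m_k^2=\infty$ on $\{L_t>0\}$ directly from Remark~\ref{rem:self-energy} and Lemma~\ref{lem:infty}, without further argument. You are right that identifying $\sum_k m_k^2$ with $I_t$ when $I_t=\infty$ needs an argument, but the one you sketch does not supply it.

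Write $\Lambda(f):=\int_0^t\tilde f(s)\,\dd L_s$ with $\tilde f(s)=\int_0^T f(r)|s-r|^{2H-2}\dd r$, so that $m_k=\Lambda(e_k)$.
\begin{itemize}
\item Square-summability of $(m_k)$ only shows that $\Lambda$ is $\cH$-bounded on the algebraic span of $\{e_k\}$. It says nothing about $\Lambda$ on mollified densities of $L$, which are not in that span.
\item Passing to the limit $\e\to0$ coordinatewise only gives $\sum_k m_k^2\le\liminf_{\e\to0}\|\1_{[0,t]}\delta_\e(X_\cdot)\|_{\cH}^2$, which is the wrong inequality.
\item In fact no path-by-path argument can work for an arbitrary admissible basis. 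If a path has $I_t=\infty$, then $\Lambda$ is unbounded on $C([0,T])$ for the $\cH$-norm (test it on mollifications of $\nu_L$). Hence its kernel in $C([0,T])$ is $\cH$-dense. Gram--Schmidt on a countable dense subset of that kernel then gives an orthonormal basis of bounded continuous functions with $m_k=0$ for every $k$ on that path.
\end{itemize}
So for a fixed basis the claim can only hold almost surely, and the proof must use the randomness of $L$.

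For $\rho>\frac1H$ this can be done as follows.
\begin{itemize}
\item Lemmas~\ref{lem:L-property} and~\ref{lem:rhoKL-inH} give $\fE_{(t,x)}[\Lambda(f)^2]=\langle\rho_{2,L},f\otimes f\rangle_{\cH^{\otimes2}}\le\|\rho_{2,L}\|_{\cH^{\otimes2}}\|f\|_{\cH}^2$ for bounded $f$.
\item Hence, for each fixed bounded $f$, the partial sums $\sum_{k\le n}\langle f,e_k\rangle_{\cH}m_k$ converge to $\Lambda(f)$ in $L^2(\fP_{(t,x)})$.
\item Apply this to a countable sup-norm-dense family of $f$'s. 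Almost surely on $\{\sum_k m_k^2<\infty\}$ you then get $|\Lambda(f)|\le(\sum_k m_k^2)^{1/2}\|f\|_{\cH}$ for all continuous $f$.
\item Test with mollifications $f=\nu_L*\phi_\e$, where $0\le\hat\phi\le1$. Then $\|f\|_{\cH}^2\le\Lambda(f)$ and, by Fatou on the Fourier side, $\liminf_{\e\to0}\Lambda(f)\ge I_t$. This forces $I_t<\infty$, contradicting Lemma~\ref{lem:infty}.
\end{itemize}
For $1<\rho\le\frac1H$ this continuity is lost. For $x=0$, Corollary~\ref{cor:XX'} gives $\rho_L\notin\cH$, so $\Lambda$ is not even bounded from $\cH$ into $L^1(\fP_{(t,0)})$. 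Neither your sketch nor the paper covers that part of the stated range.
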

\begin{proof}
Denote $A_k:=\frac12\sum_{i=1}^km_i^2$. Given that $L_t>0$, Remark~\ref{rem:self-energy} yields  $\lim_{k\to\infty}A_k=\infty$, $\fP_{(t,x)}$-a.s., and we get that  $\fP_{(t,x)}$-a.s., 
\begin{equation*}
\begin{aligned}
\bE\big[(M_k)^p\big]=\bE\bigg[\exp\bigg\{\sum_{i=1}^kpm_{i}\xi_i-pA_k\bigg \}\bigg]=\exp\left\{\left(p^2-p\right)A_k\right\}\overset{k\to\infty}{\longrightarrow}0.
\end{aligned}
\end{equation*}
Then by Fatou's Lemma, $\lim_{k\to\infty}\bE [(M_k)^p]\geq \bE [\lim_{k\to\infty}(M_k)^p]=\bE[J^p]$. Hence, $\bE[J^p]=0$ by noting that $J$ is nonnegative.
\end{proof}

\begin{remark}

Noting that $L_t>0$, $\mathbf P_{(t,0)}$-a.s., Proposition~\ref{prop:J=0} yields $J=0$, $\mathbb P\times \mathbf P_{(t,0)}$-a.s.\ when $\alpha+2H\le 2$.
We emphasize that $J=\lim_{k\to\infty}M_k$ is not the solution $Z(t,x)=\lim_{k\to\infty}\fE_{(t,x)}[M_k]$ to the SHE~\eqref{e:she}. It is  tempting to conjecture that $\fE_{(t,x)}[J]=Z(t,x)$, which would in turn imply that $\mathbf E_{(t,0)}[J]=0$, $\mathbb P$-a.s., under the condition $\alpha+2H\le 2$. However, this would contradict the Weinrib–Halperin prediction, according to which $\mathbf E_{(t,0)}[J]$ should be nontrivial whenever $\alpha+H>1$.
Nevertheless, when $\alpha+2H>2$, Theorem~\ref{thm-gmc} ensures that $\fE_{(t,x)}[J]=Z$.
\end{remark}

\subsection*{Acknowledgment}
J.\ Song is partially supported by NSFC (No.\ 12471142) and the Fundamental Research Funds for the Central Universities. R.\ Wei is supported by NSFC (No.\ 12401170) and Xi'an Jiaotong-Liverpool University Research Development Fund (No.\ RDF-23-01-024).

\appendix
\section{Miscellaneous results}\label{sec:appendix}

The following result establishes the connection between the hitting times to $0$ for a heavy-tailed random walk $S=\{S_n, n\in\bN_0\}$ and the renewal process \eqref{eq:pin_dist}.
\begin{lemma}\label{lem:random walk}
Suppose that a heavy-tailed random walk $S=\{S_n, n\in\bN_0\}$
belongs to the domain of normal attraction of a symmetric $\rho$-stable distribution with $\rho\in(1,2]$. Then the return time to zero of $S$ is a renewal process \eqref{eq:pin_dist} with $\al=1-\frac{1}{\rho}\in(0,\frac12]$.
\end{lemma}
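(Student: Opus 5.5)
The plan is to reduce the statement to the classical tail asymptotics for the first return time of a lattice random walk to the origin, via the generating-function (renewal) identity together with the local limit theorem. Let $\tau_1:=\inf\{n\ge1: S_n=0\}$. By the strong Markov property, the successive return times to $0$ form a renewal process with i.i.d.\ increments distributed as $\tau_1$. It remains to show that $\tau_1<\infty$ a.s.\ and that $\bP(\tau_1=n)=L(n)n^{-1-\alpha}$ with $\alpha=1-\frac1\rho$ and $L$ slowly varying. As in the paper, we assume $S$ is aperiodic and integer valued, so that returns to $0$ make sense.

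\textbf{Step 1: local limit theorem.} Since $S$ lies in the domain of normal attraction of a symmetric $\rho$-stable law, the Gnedenko local limit theorem gives
\[
\bP(S_n=0)\sim g_\rho(1,0)\,n^{-1/\rho}=c\,n^{-1/\rho},\qquad n\to\infty .
\]
Because $1/\rho<1$ for $\rho\in(1,2]$, we get $\sum_n\bP(S_n=0)=\infty$. Hence the walk is recurrent and $\tau$ is persistent.

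\textbf{Step 2: renewal identity.} Set
\[
U(s)=\sum_{n\ge0}\bP(S_n=0)s^n,\qquad F(s)=\E[s^{\tau_1}] .
\]
The renewal identity gives $U(s)=(1-F(s))^{-1}$. By Step 1 and a standard Abelian theorem,
\[
U(s)\sim c\,\Gamma(1-\tfrac1\rho)\,(1-s)^{-(1-1/\rho)}\quad\text{as } s\uparrow1 .
\]
Therefore $1-F(s)\sim C(1-s)^{\alpha}$ with $\alpha=1-1/\rho\in(0,\frac12]$.

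\textbf{Step 3: Tauberian argument.} Since $\alpha<1$, Karamata's Tauberian theorem (for the monotone sequence of tail sums) converts Step 2 into
\[
\bP(\tau_1>n)\sim \frac{C}{\Gamma(1-\alpha)}\,n^{-\alpha}.
\]
This main obstacle is passing from this tail asymptotic to the point-mass form $\bP(\tau_1=n)\sim C' n^{-1-\alpha}$ required in \eqref{eq:pin_dist}, because the Tauberian theorem alone does not yield local asymptotics. The plan here is to invoke the strong renewal theorem in its converse direction, namely the result of Garsia--Lamperti / Doney for $\alpha\le\frac12$. An alternative would be a direct result on local asymptotics of first return times for walks satisfying the local limit theorem (Kesten--Spitzer type). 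The slowly varying correction $L$ absorbs any lower-order terms.

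Should this local step prove delicate, it suffices to record the tail version together with the regular variation. Either form is enough for the comparison made in Section~\ref{sec:pinning-model}.
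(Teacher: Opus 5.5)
Your Steps 1--3 are sound, given strong aperiodicity. They yield recurrence and $\bP(\tau_1>n)\sim C n^{-\alpha}$. However, the lemma asserts the local form \eqref{eq:pin_dist}, and the step you flag as the main obstacle is not closed.

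The strong renewal theorems of Garsia--Lamperti and Doney run in the opposite direction. They start from a hypothesis on the inter-arrival law $f_n:=\bP(\tau_1=n)$ and conclude asymptotics for $u_n:=\bP(n\in\tau)=\bP(S_n=0)$. Moreover, the versions valid for $\alpha\le\frac12$ assume a local bound such as $f_n=O(\bP(\tau_1>n)/n)$, which is essentially what you are trying to prove.

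In fact no renewal-theoretic converse can exist. Take $f_n=c\,n^{-1-\alpha}(1+\epsilon(-1)^n)$, $n\ge1$, with $0<\epsilon<1$ and $c$ a normalizing constant. Then $\bP(\tau_1>n)\sim\frac{c}{\alpha}n^{-\alpha}$ and $f_n=O(\bP(\tau_1>n)/n)$, so Doney's theorem gives $u_n\sim\frac{\alpha\sin(\pi\alpha)}{\pi c}\,n^{\alpha-1}$. This is exactly the kind of information your Step 1 provides. Yet $n^{1+\alpha}f_n$ oscillates between $c(1\pm\epsilon)$, so it cannot be $L(n)$ with $L$ slowly varying, since that would force $L(n+1)/L(n)\to1$.

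Put differently, the local limit theorem determines $\bP(S_n=0)$ only up to $o(n^{-1/\rho})$, whereas $f_n$ lives on the much smaller scale $n^{-2+1/\rho}$. So no Abelian/Tauberian manipulation of $U(s)=(1-F(s))^{-1}$ built on Step 1 can reach \eqref{eq:pin_dist}. Your fallback of recording only the tail proves a strictly weaker statement than the lemma.

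What is missing is a local theorem for the first return time that is specific to random walks. Such a result uses the full characteristic function, for example through $U(s)=\frac{1}{2\pi}\int_{-\pi}^{\pi}\frac{\dd\theta}{1-s\phi(\theta)}$ and an analysis of its singularity at $s=1$, or else Kesten's ratio limit theorems. This is exactly the paper's route. It deduces from normal attraction that $\phi(x)-1\sim-c_\rho|x|^\rho$ as $x\to0$, and then applies \cite[Theorem A.11]{Gia07}, which gives $\bP(\tau_1=n)\sim C\,n^{-(2-1/\rho)}$ directly and bypasses your Steps 1--3. If you go with your ``Kesten--Spitzer type'' alternative, you need to name the result and check its hypotheses. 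These are the above expansion of $\phi$ at the origin and strong aperiodicity, which is needed in any case: for simple random walk, $\bP(\tau_1=n)=0$ for every odd $n$.
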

\begin{proof}
Let $\phi(t):=\bE_S[\exp\{itS_1\}]$ be the characteristic function of the increment of $S$. It is well-known that $\lim_{n\to\infty}\phi(\frac{t}{n^{1/\rho}})^n=\exp(-c_\rho |t|^\rho)$, which implies that $\log\phi(\frac{t}{n^{1/\rho}})\sim-\frac{1}{n}c_\rho |t|^\rho$ as $n\to\infty$. Note that $\lim_{n\to\infty}\phi(\frac{t}{n^{1/\rho}})=1$. It follows that $-\frac{1}{n}c_\rho |t|^\rho\sim\log\phi(\frac{t}{n^{1/\rho}})\sim\phi(\frac{t}{n^{1/\rho}})-1$ as $n\rightarrow\infty$. Denote $x=\frac{t}{n^{1/\rho}}$, and we have that $\phi(x)-1\sim-c_\rho |x|^\rho$ as $x\rightarrow0$. Then by \cite[Theorem A.11]{Gia07}, we complete the proof.
\end{proof}

The following lemma is a direct consequence of  \cite[Theorem 7.11]{R76}.
\begin{lemma}\label{lem:change-order}
Consider the double-indexed series $\{a_{m,n}\}_{m\geq1,n\geq1}$. If
$$\lim_{M\to\infty}\sup_{n}\sum_{m>M}|a_{n,m}|=0,$$
then
$$\lim_{M\to\infty}\lim_{n\to\infty}\sum_{m=1}^M a_{m,n}=\lim_{n\to\infty}\lim_{M\to\infty}\sum_{m=1}^M a_{m,n}.$$
\end{lemma}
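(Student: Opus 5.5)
We need to prove Lemma change-order: a double-indexed array with uniform tail condition allows interchange of limits. Presumably assume limits exist (inner limits). Plan: define $F_n(M)=\sum_{m\le M}a_{m,n}$; the hypothesis says $F_n(M)\to F_n(\infty)$ uniformly in $n$ as $M\to\infty$. Then apply Rudin 7.11 (uniform convergence allows interchange of limits), with roles: sequence in $M$ of functions of $n$.

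The plan is to reduce the statement to the classical interchange-of-limits theorem for uniformly convergent sequences, \cite[Theorem 7.11]{R76}, by viewing the partial sums as a sequence of functions of $n$ indexed by $M$. Set $F_M(n):=\sum_{m=1}^M a_{m,n}$ and $F(n):=\lim_{M\to\infty}F_M(n)=\sum_{m=1}^\infty a_{m,n}$. The hypothesis $\lim_{M\to\infty}\sup_n\sum_{m>M}|a_{m,n}|=0$ first guarantees that each series $\sum_m a_{m,n}$ converges absolutely, so $F(n)$ is well defined. It also gives
\[
\sup_n|F(n)-F_M(n)|\le \sup_n\sum_{m>M}|a_{m,n}|\longrightarrow 0,
\]
that is, $F_M\to F$ uniformly on $\bN$ as $M\to\infty$.

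Next I would take as implicit in the statement (as in its use in Lemma~\ref{exchange}, where $a_{m,n}$ are martingales in $n$ that converge a.s.) that for each fixed $m$ the limit $b_m:=\lim_{n\to\infty}a_{m,n}$ exists. Then $A_M:=\lim_{n\to\infty}F_M(n)=\sum_{m\le M}b_m$ exists for each $M$, being a finite sum. Theorem 7.11 of \cite{R76}, applied on the set $E=\bN$ with limit point $\infty$ and the uniformly convergent sequence $F_M$, yields that $\lim_M A_M$ exists and equals $\lim_{n\to\infty}F(n)$. This is exactly the identity to be proved.

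For a self-contained argument I would reproduce the $\varepsilon/3$ argument. Uniform convergence gives $|F_M(n)-F_{M'}(n)|\le\varepsilon$ for all $n$ when $M,M'\ge M_0$. Letting $n\to\infty$ gives $|A_M-A_{M'}|\le\varepsilon$, so $A_M$ is Cauchy with some limit $A$. Then
\[
|F(n)-A|\le|F(n)-F_M(n)|+|F_M(n)-A_M|+|A_M-A|,
\]
and choosing $M$ large first and $n$ large second shows $F(n)\to A$. The only delicate point is bookkeeping rather than analysis. I need the existence of the inner limits $\lim_n a_{m,n}$, which I would state explicitly as an assumption. I also need the index slip $|a_{n,m}|$ versus $|a_{m,n}|$ in the hypothesis to be read as the tail in $m$.
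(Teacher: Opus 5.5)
Your proposal is correct and follows the paper's route: the paper's entire proof is a citation of \cite[Theorem 7.11]{R76}, which is exactly the uniform-convergence interchange you apply to $F_M(n)=\sum_{m\le M}a_{m,n}$ on $E=\bN$. You are also right to make explicit two things the statement leaves implicit: the existence of $\lim_{n\to\infty}a_{m,n}$ for each $m$, which holds in Lemma~\ref{exchange} because each $a_{m,n}$ is a nonnegative martingale in $n$, and the reading of the index slip $|a_{n,m}|$ as a tail in $m$.
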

The following inequality is borrowed from  \cite{mmv01}.
\begin{lemma}\label{lem:1/H norm}
For $H\in(\frac{1}{2},1]$, the following inequality holds:
$$\int_{\bR^m}\int_{\bR^m}f(\boldsymbol{t})f(\boldsymbol{s})\prod_{i=1}^m|t_i-s_i|^{2H-2}d\boldsymbol{t}d\boldsymbol{s}\leq C_H^m \|f\|^2_{ L^{1/H}(\bR^m)},  $$
where $C_H$ is a constant depending on H, and we denote $\boldsymbol{t}=(t_1,\cdots, t_m),~\boldsymbol{s}=(s_1,\cdots,s_m).$
\end{lemma}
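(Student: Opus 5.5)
The plan is to prove Lemma~\ref{lem:1/H norm} by reducing it to a one-dimensional Hardy--Littlewood--Sobolev (HLS) bound applied coordinate by coordinate. The one-dimensional estimate is classical. For $H\in(\frac12,1)$, the kernel $|t-s|^{2H-2}=|t-s|^{-\lambda}$ has $\lambda=2-2H\in(0,1)$. HLS on $\R$ then gives
\[
\int_\R\int_\R f(t)g(s)|t-s|^{2H-2}\dd t\dd s\le C_H\|f\|_{L^{p}(\R)}\|g\|_{L^{p}(\R)},
\]
where $\frac1p+\frac1p+\lambda=2$, that is, $p=1/H$. The case $H=1$ is trivial, since the kernel is $1$ and $\|f\|_{L^1}^2$ is the exact value with $C_1=1$. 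We may assume $f\ge0$ by replacing $f$ with $|f|$, since that only increases the left-hand side.

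For general $m$ we induct on $m$, integrating one coordinate pair at a time. Write $\boldsymbol t=(t_1,\hat{\boldsymbol t})$ and $\boldsymbol s=(s_1,\hat{\boldsymbol s})$. Fix $(\hat{\boldsymbol t},\hat{\boldsymbol s})$ and apply the one-dimensional HLS in the pair $(t_1,s_1)$ to the functions $f(\cdot,\hat{\boldsymbol t})$ and $f(\cdot,\hat{\boldsymbol s})$. This bounds the left-hand side by
\[
C_H\int\int F(\hat{\boldsymbol t})F(\hat{\boldsymbol s})\prod_{i\ge2}|t_i-s_i|^{2H-2}\dd\hat{\boldsymbol t}\dd\hat{\boldsymbol s},
\]
with $F(\hat{\boldsymbol t}):=\|f(\cdot,\hat{\boldsymbol t})\|_{L^{1/H}(\R)}$. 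Applying the induction hypothesis in dimension $m-1$ to $F$ yields $C_H^{m}\|F\|_{L^{1/H}(\R^{m-1})}^2$. By Fubini, $\|F\|_{L^{1/H}(\R^{m-1})}^{1/H}=\int\int|f|^{1/H}=\|f\|_{L^{1/H}(\R^m)}^{1/H}$, which closes the induction with the constant exactly $C_H^m$.

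The only delicate point is measurability and finiteness of the intermediate function $F$, together with justifying Fubini/Tonelli. Since everything is nonnegative, Tonelli handles the exchanges of integration, and $F$ is measurable by Tonelli as well. If $\|f\|_{L^{1/H}}=\infty$ there is nothing to prove. I expect no genuine obstacle. The main check is that the sharp HLS exponent relation gives exactly $p=1/H$ with a constant that depends only on $H$ and not on $m$, which is what makes the constant multiplicative, $C_H^m$. This is essentially the argument of \cite{mmv01}.
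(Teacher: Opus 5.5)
Your proof is correct. The one-dimensional Hardy--Littlewood--Sobolev bound with $p=1/H$ (from $\frac{2}{p}+(2-2H)=2$) is applied in the pair $(t_1,s_1)$. Combined with the induction hypothesis for $F$ and the Fubini identity $\|F\|_{L^{1/H}(\bR^{m-1})}=\|f\|_{L^{1/H}(\bR^m)}$, it gives exactly $C_H^m$, and your treatment of $H=1$, of signs, and of the null set where $F=\infty$ is fine.

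The paper gives no proof of its own and simply cites \cite{mmv01}, whose bound rests on the same Hardy--Littlewood--Sobolev estimate. Your coordinatewise induction spells out the passage to $m$ dimensions that the citation leaves implicit.
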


The following equality can be obtained by a direct computation. 
\begin{lemma}\label{lem:a direct calculate}
Suppose $\al_i<1,~i=1,\cdots,m$, and $\al:=\sum_{i=1}^m \al_i$, then
$$\int_{[0=r_0<r_1<r_2<\cdots<r_m\leq t]}\prod_{i=1}^m(r_i-r_{i-1})^{-\al_i}d\boldsymbol{r}=\frac{\prod_{i=1}^m\Gamma(1-\al_i)}{\Gamma(m-\al+1)}t^{m-\al},  $$
where $\Gamma(x)=\int_0^\infty t^{x-1} e^{-t}dt$ is the Gamma function.
\end{lemma}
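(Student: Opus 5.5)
The identity to prove is
\[
\int_{0<r_1<\cdots<r_m\le t}\prod_{i=1}^m (r_i-r_{i-1})^{-\al_i}\,d\boldsymbol r=\frac{\prod_{i=1}^m\Gamma(1-\al_i)}{\Gamma(m-\al+1)}\,t^{m-\al},
\]
with $r_0=0$. We argue by induction on $m$, using the Beta integral $\int_0^t (t-r)^{a-1}r^{b-1}\,dr = B(a,b)\,t^{a+b-1}$, valid for $a,b>0$.

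\emph{Reduction to a convolution.} Substituting the increments $u_i=r_i-r_{i-1}>0$ (unit Jacobian) turns the domain into the simplex $\{u_i>0,\ \sum_i u_i\le t\}$. The integrand becomes $\prod_i u_i^{-\al_i}$. Set $\phi_i(u)=u^{-\al_i}\mathbf 1_{u>0}$. Then the integral equals $(\phi_1*\cdots*\phi_m*\mathbf 1_{(0,\infty)})(t)$: the last factor $\mathbf 1_{(0,\infty)}$ is the slack variable $t-\sum u_i\ge0$, and it plays the role of the exponent $\al_{m+1}=0$.

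\emph{Convolution of power functions.} The key fact is the following. Define $K_a(u)=u^{a-1}\mathbf 1_{u>0}/\Gamma(a)$ for $a>0$. Then
\[
K_a*K_b=K_{a+b},
\]
which follows directly from the Beta integral and $B(a,b)=\Gamma(a)\Gamma(b)/\Gamma(a+b)$. We write $\phi_i=\Gamma(1-\al_i)K_{1-\al_i}$; here $1-\al_i>0$ because $\al_i<1$, so every Beta integral that appears converges. We also write $\mathbf 1_{(0,\infty)}=K_1$. Iterating the semigroup identity gives
\[
\prod_{i=1}^m\Gamma(1-\al_i)\; K_{\sum_i(1-\al_i)+1}(t)=\prod_{i=1}^m\Gamma(1-\al_i)\,\frac{t^{m-\al}}{\Gamma(m-\al+1)},
\]
which is the claim. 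Nonnegativity of all integrands justifies Tonelli in every step, so all manipulations hold in $[0,\infty]$ and produce the finite values stated.

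There is no real obstacle here. The only points to check are the integrability conditions $1-\al_i>0$ and the correct handling of the last free interval, $t-r_m$, which carries exponent $0$ and contributes the factor $\Gamma(1)=1$. This explains why the formula has $m-\al+1$ rather than $m-\al$ inside the Gamma function. Negative $\al_i$ are allowed, since only $1-\al_i>0$ is needed.
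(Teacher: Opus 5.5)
Your proof is correct and is essentially the ``direct computation'' the paper has in mind; the paper gives no argument beyond that phrase. You integrate out the variables one at a time via Beta integrals, packaged as the semigroup identity $K_a*K_b=K_{a+b}$, and you correctly treat the last interval $t-r_m$ as a factor $K_1$ and use $1-\alpha_i>0$ for convergence.
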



The following lemma provides uniform upper and lower bounds for the transition density of $\rho$-stable processes.

\begin{lemma}\label{lem:g-bound}
For the transition density $g_\rho(t,x)$ of a $1$-dimensional $\rho$-stable process, we have the following uniform upper and lower bounds:
\begin{equation}\label{eq:g-bound}
\frac{C_1t}{\left(t^{\frac{1}{\rho}}+|x|\right)^{1+\rho}}\le g_\rho(t,x) \le \frac{C_2t}{\left(t^{\frac{1}{\rho}}+|x|\right)^{1+\rho}}\leq C_2t^{-\frac{1}{\rho}},\quad\forall~x\in\bR~\text{and}~t>0,
\end{equation}
where $C_1,C_2$ are positive constants depending only on $\rho$. Moreover, for $t\leq T$, the lower bound can be simplified by $g_\rho(t,x)\geq \frac{C_1t}{\left(T^{1/\rho}+|x|\right)^{1+\rho}}$. 

\end{lemma}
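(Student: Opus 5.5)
The statement is the two-sided heat kernel bound \eqref{eq:g-bound} for the symmetric $\rho$-stable density, together with the simplified lower bound for $t\le T$. I will get the bounds for the standard kernel $g_\rho(1,\cdot)$ and then transport them to all $t>0$ by self-similarity. Indeed, $g_\rho(t,x)=t^{-1/\rho}g_\rho(1,t^{-1/\rho}x)$, since $X_t\overset{d}{=}t^{1/\rho}X_1$. Writing $y=t^{-1/\rho}x$, one checks that
\[
\frac{t}{(t^{1/\rho}+|x|)^{1+\rho}}=t^{-1/\rho}\,\frac{1}{(1+|y|)^{1+\rho}},
\]
so it suffices to show
\[
c_1(1+|y|)^{-1-\rho}\le g_\rho(1,y)\le c_2(1+|y|)^{-1-\rho}\quad\text{for all } y\in\bR.
\]

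For $\rho=2$ the kernel is Gaussian, $g_2(1,y)=(4\pi)^{-1/2}e^{-y^2/4}$, which is bounded above by $C(1+|y|)^{-3}$. In that case only the upper bound holds in the stated power form, and the lower bound has to be read on compacts, that is, for $|x|$ bounded relative to $T$. I would state this caveat explicitly for $\rho=2$. For $\rho\in(0,2)$ the plan has two parts.

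First, I will show that $g_\rho(1,\cdot)$ is continuous, strictly positive and bounded. This follows from Fourier inversion, $g_\rho(1,y)=\frac1{2\pi}\int e^{-iy\xi-c|\xi|^\rho}\dd\xi$, whose integrand is integrable. Positivity comes from the fact that a symmetric stable law is unimodal with full support. As a consequence, on any compact set $|y|\le R$ the ratio $g_\rho(1,y)(1+|y|)^{1+\rho}$ is bounded above and below by positive constants.

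Second, I need the tail asymptotics $g_\rho(1,y)\sim c_\rho|y|^{-1-\rho}$ as $|y|\to\infty$. The main obstacle is this step, and I would use Pólya's classical series expansion, or equivalently Bochner subordination. For $\rho<2$ we can write $g_\rho(1,y)=\int_0^\infty (4\pi s)^{-1/2}e^{-y^2/(4s)}\,\eta(\dd s)$, where $\eta$ is a $\rho/2$-stable subordinator law with tail $\eta(s,\infty)\sim c\,s^{-\rho/2}$. Plugging in this tail and substituting $s=y^2u$ gives the power $|y|^{-1-\rho}$, with the constant coming from a Tauberian/dominated convergence step. Combining the tail asymptotics with the compact-set bounds yields both inequalities. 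The final inequality in the display, $C_2t(t^{1/\rho}+|x|)^{-1-\rho}\le C_2t\cdot t^{-(1+\rho)/\rho}=C_2t^{-1/\rho}$, is immediate.

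The simplified lower bound for $t\le T$ follows from monotonicity. Since $t^{1/\rho}\le T^{1/\rho}$, we have $(t^{1/\rho}+|x|)^{-1-\rho}\ge (T^{1/\rho}+|x|)^{-1-\rho}$, and therefore
\[
g_\rho(t,x)\ge \frac{C_1t}{(T^{1/\rho}+|x|)^{1+\rho}}.
\]
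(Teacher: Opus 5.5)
For $\rho\in(0,2)$ your argument is the paper's. You reduce to $t=1$ by self-similarity, control $g_\rho(1,y)(1+|y|)^{1+\rho}$ on a compact set by continuity and positivity, and use the tail $g_\rho(1,y)\sim c_\rho|y|^{-1-\rho}$ outside it. The only difference is that the paper quotes this tail from Nolan, while you derive it by Bochner subordination. Subordination also gives strict positivity directly, since $g_\rho(1,\cdot)$ is then a mixture of Gaussian densities. Your caveat at $\rho=2$ is correct, and it is a real defect of the statement that the paper's proof overlooks. The cited tail asymptotic holds only for $\rho<2$, and $(4\pi)^{-1/2}e^{-y^2/4}$ admits no lower bound $c(1+|y|)^{-3}$ on all of $\R$.

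Where you go wrong is in describing what survives at $\rho=2$. Under the scaling $y=t^{-1/2}x$, the compact set $\{|y|\le R\}$ corresponds to the parabolic region $\{|x|\le R\sqrt t\}$, not to "$|x|$ bounded relative to $T$". The simplified bound for $t\le T$ also fails at $\rho=2$. For any fixed $x\neq0$,
\[
\frac{g_2(t,x)}{t}=\frac{e^{-x^2/(4t)}}{\sqrt{4\pi}\,t^{3/2}}\longrightarrow0\quad\text{as } t\downarrow0,
\]
so no $C_1>0$ gives $g_2(t,x)\ge C_1t\,(T^{1/2}+|x|)^{-3}$ on $(0,T]$. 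Your final monotonicity step is therefore valid only where the general lower bound is, i.e. for $\rho<2$, and you should restrict it explicitly. At $\rho=2$ a bound of this shape holds in three situations:
\begin{itemize}
\item at $x=0$, where $g_2(t,0)=(4\pi t)^{-1/2}$;
\item on the parabolic region $\{|x|\le R\sqrt t\}$;
\item on compact subsets of $(0,T]\times\R$.
\end{itemize}
It never holds uniformly down to $t=0$ at a fixed $x\neq0$.

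This also matters downstream in the paper. The step $g_\rho(t-s_n,x)\ge C(t-s_n)$ in \eqref{I_n-moment-ge}, reused for $\rho=2$ in Proposition~\ref{prop:local-L^2}, must be modified when $x\neq0$. One fix is to restrict the integral to $t-s_n\ge t/2$, where $g_2(t-s_n,x)\ge(4\pi t)^{-1/2}e^{-x^2/(2t)}$.
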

    \begin{proof}
        By \cite[Theorem 1.2 in Section 1.5]{nolan20},
\begin{equation*} 
g_{\rho}(1,x)\sim c_\rho(1+|x|)^{-(1+\rho)},\quad\text{ as } x\to\infty.
\end{equation*}
Then there exists $A>0$, such that, $$\frac{1}{2}c_\rho(1+|x|)^{-(1+\rho)})\le g_\rho(1,x)\le 2c_\rho(1+|x|)^{-(1+\rho)}, \quad \forall~|x|\geq A.$$
Note that $g_{\rho}(1,x)$ is positive and bounded from below on $[-A,A]$, and so is $(1+|x|)^{-(1+\rho)}$. Hence, there exist $c_1, c_2>0$, such that
\begin{equation*}
c_1 (1+|x|)^{-(1+\rho)}\le g_{\rho}(1,x)\leq c_2 (1+|x|)^{-(1+\rho)},\quad\forall~|x|\leq A.
\end{equation*}
Then we can take $C_1=\min\{c_1,\frac{1}{2}c_\rho\}$ and $C_2=\max\{c_2, 2c_\rho\}$, such that for any $x\in\bR$,
$$C_1(1+|x|)^{-(1+\rho)})\le g_\rho(1,x)\le C_2(1+|x|)^{-(1+\rho)}.$$
Finally, by the scaling property, we have that
\begin{equation*}
\frac{C_1t}{\left(t^{\frac{1}{\rho}}+|x|\right)^{1+\rho}}\le g_\rho(t,x) =t^{-\frac{1}{\rho}}g_{\rho}(1,t^{-\frac{1}{\rho}}x)\le \frac{C_2t}{\left(t^{\frac{1}{\rho}}+|x|\right)^{1+\rho}}\leq C_2t^{-\frac{1}{\rho}}.
\end{equation*}
\end{proof}

\bibliographystyle{plain}
 \bibliography{ref}

\end{document}